\DeclareMathAlphabet{\mathpzc}{OT1}{pzc}{m}{it}
\tikzset{ext/.style={circle, draw,inner sep=1pt},int/.style={circle,draw,fill,inner sep=1pt},nil/.style={inner sep=1pt}}
\tikzset{exte/.style={circle, draw,inner sep=3pt},inte/.style={circle,draw,fill,inner sep=3pt}}
\tikzset{diagram/.style={matrix of math nodes, row sep=3em, column sep=2.5em, text height=1.5ex, text depth=0.25ex}}
\tikzset{diagram2/.style={matrix of math nodes, row sep=0.5em, column sep=0.5em, text height=1.5ex, text depth=0.25ex}}
\tikzset{every picture/.append style={baseline=-.65ex}}
\newcommand{\nc}{\newcommand}
\providecommand{\eprint}[2][]{\href{http://arxiv.org/abs/#2}{arXiv:#2}}
\numberwithin{equation}{section}
\newtheorem{thm}[equation]{Theorem}
\newtheorem{prop}[equation]{Proposition}
\newtheorem{prop_def}[equation]{Proposition-Definition}
\newtheorem{lem}[equation]{Lemma}
\newtheorem{cor}[equation]{Corollary}
\newtheorem{rem}[equation]{Remark}
\newtheorem{definition}[equation]{Definition}
\newtheorem{example}[equation]{Example}
\newtheorem{notation}[equation]{Notation}
\newtheorem*{theorem*}{Theorem}
\newtheorem{theoremA}{Theorem}
\newtheorem{conjectureA}[theoremA]{Conjecture}
\nc{\gl}{\mathfrak{gl}}
\nc{\GL}{\mathfrak{GL}}
\nc{\g}{\mathfrak{g}}
\nc{\gh}{\widehat\g}
\nc{\h}{\mathfrak{h}}
\nc{\la}{\lambda}
\nc{\al}{\alpha }
\nc{\be}{\beta }
\nc{\ve}{\varepsilon }
\nc{\om}{\omega }
\nc{\af}{\mathrm{af}}
\nc{\ta}{\theta}
\nc{\ch}{{\mathop {\rm ch}}}
\nc{\gch}{{\mathop {\rm gch}}}
\nc{\Tr}{{\mathop {\rm Tr}\,}}
\nc{\Id}{{\mathop {\mathbf{Id}}}}
\nc{\ad}{{\mathop {\rm ad}}}
\nc{\bra}{\langle}
\nc{\ket}{\rangle}
\nc{\pa}{\partial}
\nc{\ld}{\ldots}
\nc{\cd}{\cdots}
\nc{\hk}{\hookrightarrow}
\nc{\T}{\otimes}
\nc{\gr}{\mathrm{gr}}
\nc{\ov}{\overline}
\nc{\cO}{\mathcal O}
\nc{\cC}{\mathcal C}
\nc{\msl}{\mathfrak{sl}}
\nc{\mgl}{\mathfrak{gl}}
\nc{\mSL}{\mathsf{SL}}
\nc{\U}{\mathrm U}
\nc{\V}{\EuScript V}
\nc{\cL}{\mathcal{L}}
\nc{\tI}{\mathtt I}
\nc{\tJ}{\mathtt J}
\nc{\Res}{\mathrm{Res}}
\nc{\Ind}{\mathrm{Ind}}
\nc{\Coind}{\mathrm{Coind}}
\nc{\LInd}{\mathbf{L}\mathrm{Ind}}
\nc{\RCoind}{\mathbf{R}\mathrm{Coind}}
\nc{\cone}{\mathsf{cone}}
\nc{\End}{\mathrm{End}}
\nc{\hocolim}{\mathsf{hocolim}}
\nc{\sgn}{\mathsf{sgn}}
\nc{\ttt}{\text{-}}
\nc{\dual}{{\checkmark}}
\nc{\sT}{\mathbf{T}\hspace*{-.7em}\text{\bf{--}}\hspace*{0.3em}}
\nc{\sTd}{\widetilde{\sT}}
\nc{\sD}{{{\mathbf{D}\hspace*{-.9em}\text{\bf{--}}\hspace*{0.3em}}}}
\nc{\sF}{\mathbf{F}}
\nc{\sX}{\mathbf{X}}
\nc{\sv}{\mathbf{t}}
\nc{\sG}{\mathbf{G}}
\nc{\sq}{\mathbf{q}}
\nc{\sY}{\mathbf{Y}}
\nc{\sEM}{{\mathbb{EM}}}
\nc{\sPM}{{\mathbb{PM}}}
\nc{\sEMloc}{{\mathds{EM}}}
\nc{\sDM}{{\mathds{DM}}}
\nc{\BGG}{\mathbf{BGG}}
\newcommand{\bC}{{\mathbb C}}
\newcommand{\bD}{{\mathbb D}}
\newcommand{\bQ}{{\mathbb Q}}
\newcommand{\bZ}{{\mathbb Z}}
\newcommand{\bP}{{\mathbb P}}
\newcommand{\bU}{{\mathbb U}}
\newcommand{\bG}{{\mathbb G}}
\newcommand{\bB}{{\mathbb B}}
\newcommand{\bWD}{{{\mathbb W}\makebox[2pt]{D}\, }}
\newcommand{\fp}{{\mathfrak p}}
\newcommand{\fh}{{\mathfrak h}}
\newcommand{\fg}{{\mathfrak g}}
\newcommand{\fgh}{{\widehat{\mathfrak g}}}
\newcommand{\fb}{{\mathfrak b}}
\newcommand{\fn}{{\mathfrak n}}
\newcommand{\fI}{{\mathfrak I}}
\newcommand{\fP}{\mathsf{P}}
\newcommand{\sfP}{\mathsf{P}}
\newcommand{\sfQ}{\mathsf{Q}}
\newcommand{\hT}{\mathpzc{T}}
\newcommand{\hP}{\mathpzc{P}}
\newcommand{\hX}{\mathpzc{X}}
\newcommand{\hY}{\mathpzc{Y}}
\newcommand{\DAHA}{\mathcal{H}\kern -.4em\mathcal{H}}
\newcommand{\hH}{\mathcal{H}}
\newcommand{\Hom}{\mathrm{Hom}}
\newcommand{\ldot}{{\:\raisebox{1.5pt}{\selectfont\text{\circle*{1.5}}}}}
\newcommand{\udot}{{\:\raisebox{4pt}{\selectfont\text{\circle*{1.5}}}}}
\begin{document}
	
	
	\title[Categorification of DAHA and Macdonald polynomials]
	{Categorification of DAHA and Macdonald polynomials}

	\author{Syu Kato}
	\address{Syu Kato:\newline
		Department of Mathematics, Kyoto University, Oiwake Kita-Shirakawa Sakyo Kyoto 606-8502 JAPAN} \email{syuchan@math.kyoto-u.ac.jp}
	
	\author[Khoroshkin]{Anton Khoroshkin}
	\address{Anton Khoroshkin: \newline
		Department of Mathematics, University of Haifa, Mount Carmel, 3498838, Haifa, Israel
	}
	\email{khoroshkin@gmail.com}
	
	\author[Makedonskyi]{Ievgen Makedonskyi}
	\address{Ievgen Makedonskyi:\newline
		Yanqi Lake Beijing Institute of Mathematical Sciences And Applications (BIMSA), 
		No. 544, Hefangkou Village, Huaibei Town, Huairou District, Beijing 101408.}
	\email{makedonskii\_e@mail.ru}

	\begin{abstract}
		We describe a categorification of the Double Affine Hecke Algebra ($\DAHA$) associated with an affine Lie algebra $\widehat{\fg}$, including a categorification of the polynomial representation and Macdonald polynomials.
		
		Our categorification results are presented in the derived setting, focusing on the derived category of graded modules over the Lie superalgebra $\fI[\xi]$, where $\fI \subset \widehat{\fg}$ is the Iwahori subalgebra of the affine Lie algebra and $\xi$ is a formal odd variable.
		
		First, we show that the compositions of induction and restriction functors associated with minimal parabolic subalgebras $\fp_{i}$ categorify the Demazure operators $T_i + 1 \in \DAHA$, ensuring that all algebraic relations of $T_i$ have categorical interpretations. Second, for each dominant weight $\lambda$ we introduce a complex $\sEM_{\lambda}$ of $\fI[\xi]$-modules and a complex $\sPM_{\lambda}$ of $\fg[z,\xi]$-modules, whose Euler characteristics are equal to nonsymmetric $E_{\lambda}$ and symmetric $P_{\lambda}$ Macdonald polynomials respectively.
		
		We illustrate our theory with the example   $\fg=\msl_2$ where we construct the cyclic representations of Lie superalgebra $\fI[\xi]$ such that their supercharacters coincide with certain normalizations of nonsymmetric Macdonald polynomials.
	\end{abstract}

	\maketitle
	
	\setcounter{tocdepth}{1}
	\tableofcontents

	\setcounter{section}{-1}
	\section{Introduction}
	\subsection{Categorification results}
	The main goal of this paper is to present an algebraic categorification of the Affine Hecke Algebra (AHA) associated with a simple Lie algebra $\fg$ and polynomial representation of AHA. The categorification is provided by the (bounded) derived category $\bD = \bD^{b}(\fb[\xi])$ of complexes of graded finite-dimensional modules over the Lie superalgebra $\fb[\xi]$, where $\fb$ is a Borel subalgebra of a semisimple Lie algebra $\fg$, and $\xi$ is a formal odd variable. The even part of $\fb[\xi]$ consists of $\fb$, and the odd elements $\xi \fb \subset \fb[\xi]$ form an s-Lie ideal. Let $\sfP$ be the weight lattice of $\g$, and let $\sfP_+$ denote its subset of dominant weights.
	
	This construction generalizes to Double Affine Hecke Algebra, i.e. AHA associated with affine Lie algebra $\widehat{\fg}$ obtained as the untwisted affinization of $\g$, replacing the Borel subalgebra with the Iwahori Lie algebra $\fI \subset \fg[z]$. The alternating sum of graded characters provides a decategorification map: $\bD \to \bZ(\!(q,t)\!)[\sfP]$, where $q$ counts the $z$-grading and $(-t)$ the $\xi$-grading, with the minus sign reflecting $\xi$ as an odd variable. Thus, $\bD$ categorifies a polynomial ring and our first main result in the direction of categorification is the following:
	\begin{theoremA}
		\label{thm::quadr::rel::inttro}
		For each simple root $\alpha$ and the corresponding minimal parabolic subalgebra $\fp_{\alpha}\supset\fb$ the restriction functor $\Res_{\alpha}: \fp_{\alpha}[\xi]\text{-mod} \to \fb[\xi]\text{-mod}$
		between the categories of graded finite-dimensional modules satisfies the following properties:
		\begin{itemize}
			\item {\rm(Theorem~\ref{cor::H::Ind}).}
			$\Res_{\alpha}$ admits the left adjoint (called induction $\Ind_{\alpha}$) and the right adjoint (named coinduction $\Coind_{\alpha}$) such that the corresponding derived functors are well-defined and are isomorphic up to a shift $\sv$ of $\xi$-grading\footnote{Note that on the level of charaters the functor $\sv[1]$ is given by multiplication by $t$ because of the superconvention for the $\xi$-grading.}
			and the homological shift:
			$$
			\RCoind_{\alpha} \simeq \LInd_{\alpha}\circ\sv[1].
			$$
			\item 
			{\rm(Proposition~\ref{prp::spherical}).} 
			The functor $\Res_{\alpha}$ is spherical as per~\cite{Anno,Kuznetsov_Spherical}. Specifically:
			the following morphisms, involving unit and counit transformations, are isomorphisms:
			$$
			\begin{tikzcd}
				\RCoind_{\alpha}\circ\Res_{\alpha} \circ\LInd_{\alpha} \ar[r,"\sim"] & \RCoind_{\alpha}\oplus \LInd_{\alpha} \ar[r,"\sim"] &
				\LInd_{\alpha}\circ\Res_{\alpha}\circ\RCoind_{\alpha}.	
			\end{tikzcd}
			$$		
		\end{itemize}
	\end{theoremA}
	
	The main protagonists of this paper are the adjoint derived autoequivalences $\sT_{\alpha}, \sT_\alpha' \in \mathrm{End}(\bD)$ for each simple positive root $\alpha$, defined by the distinguished triangles:
	\begin{equation}
		\label{eq::T::triangle}
		\begin{array}{c}
			\Id_{\fb[\xi]} \stackrel{\text{unit}}{\to} \Res_\alpha \circ \LInd_{\alpha} \to \sT_{\alpha} \stackrel{+1}{\to} \Id_{\fb[\xi]}[1], \
			\sT_\alpha' \to \Res_{\alpha} \circ \RCoind_{\alpha} \stackrel{\text{counit}}{\to} \Id_{\fb[\xi]} \stackrel{+1}{\to} \sT'_\alpha[1].
		\end{array}
	\end{equation}
	
	We verify (Corollary~\ref{cor::T_i::char}) that on the level of charaters $\sT_\alpha$ is acting by the generator $\hT_\alpha$ of the Hecke algebra. Moreover, from Theorem~\ref{thm::quadr::rel::inttro}, we derive the following isomorphism of derived endofunctors:
	$$
	\sT_{\alpha}' \simeq \cone(\sv^{-1}\circ \Res_\alpha\circ\LInd_{\alpha} \to \Id[-1]) \simeq
	\cone\left(\sv^{-1}\circ \left(\cone(\sT_{\alpha}[-1]\stackrel{\phantom{}^{\phantom{\frac{1}{2}}}}{\to}  \Id)\right) \to \Id[-1]\right) 
	$$
	which categorifies the quadratic {\it "Hecke"} relation known for the generator $\hT_{\alpha}$:	
	\begin{equation}
		\label{eq::Hecke::quadr}
		\hT_{\alpha}^{-1} = t^{-1}(\hT_{\alpha}+1) - 1 \ \Leftrightarrow \ (\hT_\alpha+1)(\hT_\alpha-t) = 0.
	\end{equation}

	Our second result, verified for all root systems except $G_2$, is:
	\begin{theoremA}[Theorem~\ref{thm::DAHA}]
		\label{thm::B}	
		Let $\tI = \{\alpha_1, \ldots, \alpha_r\}$ be the set of simple roots for the root system $\Delta$ of a semisimple (or affine) Lie algebra $\fg$. The autoequivalences $\sT_i := \sT_{\alpha_i}$, their inverses $\sT_i'$, the weight shift autoequivalences $\sX_i := \sX^{\alpha_i}$, and the $(-\xi)$-grading shift $\sv$ satisfy the relations that categorify the one known for the braid group. In other words, we have the following isomorphisms of triangulated endofunctors:
		\begin{itemize}
			\item Braid relations:
			$
			\underbrace{\sT_i \circ \sT_j \circ \ldots }_{m_{{i,j}} \text{ factors}} \simeq \underbrace{\sT_j \circ \sT_i \circ \ldots }_{m_{{i,j}}\text{ factors}} \text{ with }m_{{i,j}} = ord_{W}(s_is_j).$
			\item Weight shifts commute with each other and for each $\mu=\sum k_i \alpha_i$ with $k_i\in \bZ$ we have a well-defined weight shift functor
			$\sX^{\mu}: =(\sX_1)^{k_1}\circ \ldots \circ (\sX_r)^{k_r}$
			\item 
			$\begin{cases}
				{\sT_i\circ \sX^{\mu} \simeq \sX^{\mu}\circ \sT_i,} {~{\text if}~ \langle \mu,\alpha_i^{\vee} \rangle=0; }
				\\
				{ \sT_i\circ\sX^{\mu} \simeq \sX^{\mu-\alpha_i}\circ\sT'_i\sv[1],}{~{\text if}~ \langle \mu,\alpha_i^{\vee} \rangle=1.}
			\end{cases}$
		\end{itemize}	
	\end{theoremA}
	
	Theorems~\ref{thm::quadr::rel::inttro} and \ref{thm::B} yields a categorification of the affine Hecke algebra, a quotient of the affine braid group's group ring by quadratic Hecke relations~\eqref{eq::Hecke::quadr}. The categorification of the double affine Hecke algebra $\DAHA$ follows naturally, with operators $\hY^{\mu} \in \DAHA$ indexed by the coroot lattice $\sfQ^{\vee}$ having well-defined categorifications as endofunctors $\sY^{\mu}$ of the derived category of supercurrent modules over the Iwahori subalgebra $\fI[\xi]$.
	Our third result states:
	\begin{theoremA}[Theorem~\ref{thm::Y_eigen}]
		For each dominant weight $\lambda$,
		there exists a well defined object $\sEM_{\lambda}\in \bD^{-}(\fI[\xi]\text{-gmod})$ such that the components of weight $\mu$ vanish in its cohomology whenever $\mu\not\preceq\lambda$. In addition,
		\begin{itemize}
			\setlength{\itemsep}{0em}	
			\item The subspace of weight $\lambda$ in $H^{s}(\sEM_{\lambda})$ is equal to zero for $s\neq 0$, and is one-dimensional for $s=0$;
			\item For all  $\mu\in Q^{\vee}_{+}$ there exists an isomorphism $\sY^{\mu}(\sEM_{\lambda}) {\cong} \sq^{-\langle\mu,\lambda\rangle}(\sEM_{\lambda}).$ Here $\sq$ -- denotes the weight shift with respect to the $z$-grading. 
		\end{itemize}	
	\end{theoremA}	
	
	Since nonsymmetric Macdonald polynomials are eigenfunctions for $\hY^{\mu} \in \DAHA$, the graded supercharacter of $\sEM_{\lambda}$ equals $E_{\lambda}$. 
	Our fourth categorification result involves the Cherednik symmetrization operator.
	For each subset $\tJ$ of the set of simple roots $\tI$ and the corresponding parabolic subagelbra $\fp_\tJ$ the restriction functor 
	$
	\Res_{\tJ}:=\Res_{\fb[\xi]}^{\fp_{\tJ}[\xi]}: \fp_{\tJ}[\xi]\ttt\text{mod} \rightarrow \fb[\xi]\ttt\text{mod}
	$ 
	admits left and right adjoint functors whose derived functors are denoted by $\LInd_{\tJ}$ and $\RCoind_{\tJ}$, respectively.
	\begin{theoremA}[Theorem~\ref{thm::PJ::categorify}]
		The triangulated endofunctor $\sD_{\tJ}:=\Res_{\tJ}\circ\LInd_{\tJ}$ "categorifies" the Cherednik symmetrization operator 
		$\hP_{\tJ}:=\sum_{w\in W_{\tJ}}\hT_{w}\in \DAHA$
		and, in particular, satisfy the following relations:
		$$
		\forall i\in \tJ \text{ we have an isomorphism } 
		\sT_{i} \circ \sD_{\tJ} \simeq \sv \sD_{\tJ}[1].
		$$
	\end{theoremA}	
	As a corollary, we get that the complex $\sPM_{\lambda}:=\LInd_{\fg}(\sEM_{\lambda})\in\bD(\fg[z,\xi]\text{-gmod})$ ($\lambda \in \sfP_+$) categorifies the corresponding Macdonald polynomials $P_{\lambda}$ (Corollary~\ref{cor::MM}.

	\subsection{Conjectures verified for $\fg=\msl_2$}
	The following list of conjectures should persuade the reader to dwell  on the categorifications we suggest:
	
	\begin{conjectureA}
		\label{conj::EM}	
		For each $\lambda\in\sfP$ with $\lambda=u_\lambda \lambda_{-}$ (where $\lambda_{-}$ is antidominant and $u_{\lambda}\in W$ is the shortest element in the finite Weyl group)  there exists a complex of graded $\fI[\xi]$-modules $\sEM_{\lambda}$ yielding the following properties:
		\begin{itemize}
			\item The weight support of the cohomology of $\sEM_{\lambda}$ is less or equal to $\lambda_{-}$;
			\item For all $\mu\in \sfQ^{\vee}\cap\sfP_{+}$ there exists an isomorphism 
			$\sY^{\mu}(\sEM_{\lambda}) \simeq \sv^{\frac{l(\mu)+\langle u_{\lambda}^{-1}(2 \rho),\mu \rangle}{2}}\sq^{-\langle \lambda,\mu\rangle} \sEM_{\lambda}[s],$
			where $2\rho=\sum_{\beta \in \Delta_+}\beta$ and $l(\mu)$ is the length of the coweight $\mu$ considered as an element of the affine Weyl group $W^{\af}$ and $[s]$ is an appropriate homological shift.
		\end{itemize}
	\end{conjectureA}
	We explain the existance of $\sEM_{\lambda}$ for dominant $\lambda$ in~\S\ref{sec::Y::eigen}, the similar arguments can be used to construct $\sEM_{\lambda}$ for antidominant $\lambda$, however, these arguments does not work for general $\lambda$.
	The supercharacter of the conjectured complex $\sEM_{\lambda}$  should coincide with Macdonald polynomial $E_{\lambda}$ up to a common factor depending on $q$ and $t$ and this is why we can call them Macdonald modules.
	
	We also believe that the complexes $\sPM_{\lambda}$ defined above as $\LInd_{\fg}(\sEM_{\lambda})$ (for dominant $\lambda$) are eigen objects with respect to the action of the endofunctor $\sD_{\tI}\sY^{\mu}\sD_{\tI}$, just as the Macdonald polynomials $P_{\lambda}$ are eigenfunctions for the operators $\hP_{\tI}\hY^{\mu}\hP_{\tI}\in\DAHA$. An essential difference to the nonsymmetric Macdonald polynomials is that the corresponding eigenvalue is not a monomial in $q$ and $t$ and the precise meaning of the categorical eigenfunction requires a delicate description which we want to omit here.
	
	Moreover, any bounds of the vanishing of hom-spaces between different Macdonald complexes $\sEM_{\lambda}$ (whose existence we conjecture) will imply  the categorification of the orthogonality of nonsymmetric polynomials as outlined in the following 
	\begin{conjectureA}
		\label{conj::Ext::1}	\label{conj::Ext::2}
		For all pairs of integral weights $\lambda\neq \mu$  the following vanishing property holds:
		\[\hom^{\udot}_{\fI[\xi]}(\sEM_{\lambda},\sEM_{\mu})= 0.\]
		\\
		Respectively, we have vanishing of the derived hom-spaces 
		\(\hom^{\udot}_{\fg[z,\xi]}(\sPM_{\lambda},\sPM_{\mu})= 0\)		
		for all pairs of different integral dominant weights $\lambda$ and $\mu$.
	\end{conjectureA}
	
	The good news is that all aforementioned conjectures are verified for  $\fg=\msl_2$
	moreover, everything can be categorified on the level of modules rather than complexes.
	For each $k\in\bZ$ the $\fI[\xi]$-module $\sEMloc_{k\omega}$ is said to be a cyclic module generated by a cyclic vector $w_{k\omega}$ of the weight $k\omega$ subject to the following list of relations:
	\[
	\begin{cases}
		e z^a \xi^b w_{k \omega}=0, a \geq 0, b ={0,1};~
		(fz)^{k}w_{k \omega}=0;~ h\xi w_{k \omega}=0, \text{ if } k\geq 0,
		\\
		f z^a \xi^b w_{k \omega}=0, a >0, b ={0,1};~
		e^{-k+1}w_{k \omega}=0;~ h\xi w_{k \omega}=0.	 \text{ if } k<0.
	\end{cases}	
	\]
	Let us outline the properties of the modules $\sEMloc_{k\omega}$ we proved:
	\begin{enumerate}
		\item {\rm({\it Theorem~\ref{character}}).} The supercharacter of the cyclic $\fI[\xi]$-module $\sEMloc_{k\omega}$ is equal to the integral form of the nonsymmetric Macdonald polynomial $E_{k\omega}$. In particular, we have $\dim\sEMloc_{k\omega} = \dim \sEMloc_{-(k+1)\omega} = 4^k$ for $k\in \bZ_{\geq 0}$.
		\item {\rm ({\it Theorem~\ref{thm::Y_eigen::sl2}}).} There are isomorphisms for all $k\in\bZ_{\geq 0}$:
		\[
		\sY^\omega (\sEMloc_{k\omega})\simeq
		\sq^{-k}\sEMloc_{k\omega};  \quad
		\sY^{-\omega} (\sEMloc_{-k\omega})\simeq	\sv^{-1}\sq^{k}  \sEMloc_{k\omega}[-1].
		\]  
		\item {\rm({\it Corollary~\ref{extwanishing}})} The $Ext$-vanishing property
		$\hom^{\udot}(\sEMloc_{k\omega},\sEMloc_{m\omega})= 0$
		holds for all $k\neq m\in \bZ$.
	\end{enumerate}

	\subsection{Relation to the geometry of flag manifolds}
	
	Let us discuss the relation between our construction and the geometry of flag manifolds. Let $G$ be the simple algebraic group such that $\fg = \mathrm{Lie} \, G$, and let $B \subset G$ be the Borel subgroup such that $\fb = \mathrm{Lie} \, B$. We consider the truncation Lie algebra
	$$\fb [\epsilon] := \frac{\fb \otimes \bC [z]}{\fb \otimes \bC [z]z^2} \cong \fb \ltimes \fb,$$
	where the last Lie algebra contains (the second, degree one copy of) $\fb$ as an ideal with a trivial bracket. We similarly define $\fb \ltimes \fb^*$ by the action $\fb \circlearrowright \fb^*$. Let $\mathbf I$ be the direct sum of injective objects in $\fb\text{-gmod}$ prolonged to $\fb[\xi]\text{-gmod}$ trivially. We have an equivalence of categories
	$$\bD^+ ( \fb[\xi]\text{-gmod} ) \longrightarrow \bD^- ( \fb \ltimes \fb^* \text{-gmod} )^{op}$$
	sending $M^{\bullet} \mapsto Ext^{\bullet} _{\fb[\xi]\text{-gmod}} ( M^{\bullet}, \mathbf I )$. The abelian category $\mathsf{Coh}_{B \times \mathbb G_m} \, \fb$ of $(B \times \mathbb G_m)$-equivariant coherent sheaves on an affine space $\fb$ has a fully faithful embedding
	$$\mathsf{Coh}_{B \times \mathbb G_m} \, \fb \hookrightarrow \fb \ltimes \fb^* \text{-gmod}$$
	by differentiating the $B$-action and regarding the $\mathbb G_m$-action as a grading. We can rewrite the LHS as
	$$\mathsf{Coh}_{B \times \mathbb G_m} \, \fb \stackrel{\cong}{\longleftarrow} \mathsf{Coh}_{G \times \mathbb G_m} \, ( G \times_B \fb )$$
	by the $G$-action, where $( G \times_B \fb )$ is the $G$-equivariant vector bundle on $G/B$ whose fiber at the point $B/B \in G/B$ is $\fb$, and the functor is the restriction to the fiber at $B/B$.
	
	Note that the fiber of $T^* (G/B)$ at $B/B$ is $( \g / \fb )^* \cong \fn$, that induces an embedding
	$$\imath : T^* (G/B) \subset G \times_B \fb$$
	through a $( B \times \mathbb G_m )$-equivariant embedding of affine spaces $\fn \subset \fb$. The categorical affine Hecke algebra action (\cite{BR13}) on the LHS of
	\begin{equation}
		\bD^b ( \mathsf{Coh}_{G \times \mathbb G_m} \, T^* (G/B) ) \stackrel{\imath_*}{\longrightarrow} \bD^b ( \mathsf{Coh}_{G \times \mathbb G_m} \, ( G \times_B \fb ) )\label{g-cat-emb}
	\end{equation}
	consists of two portions: one is the twist by $(G \times \mathbb G_m)$-equivariant line bundle on $G/B$ pulled back to $T^* (G/B)$, while the other (corresponding to our $\sT_i$) is induced by the composition
	\begin{equation}
		( s_i )_* \circ p_i^* \circ ( p_i )_* \circ s_i^*\label{g-comp}
	\end{equation}
	of push-pull functors using the natural $(G \times \mathbb G_m)$-equivariant maps:
	$$T^* ( G/ B ) \stackrel{s_i}{\longleftarrow} G \times_B \mathfrak n_i \stackrel{p_i}{\longrightarrow} G \times _{P_i} \mathfrak n_i,$$
	where $P_i \supset B$ is a minimal parabolic subgroup of $G$ and $\mathfrak n_i \subset \mathfrak n$ is the nilradical of the Lie algebra of $P_i$. These two actions naturally prolong to the RHS of (\ref{g-cat-emb}). We can check that our $\sT_i$-action is also related to an incarnation of (\ref{g-comp}), and the line bundle twists on $T^* (G/B)$ and the character twists on $\mathsf{Coh}_{B \times \mathbb G_m} \, \fn$ $(\subset \mathsf{Coh}_{B \times \mathbb G_m} \, \fb)$ correspond to each other.
	
	The geometric construction in \cite{BR13} itself extends to the case of affine flag variety $X$ (or more general Kac-Moody flag variety). In the case of affine flag variety, we have a categorical action of $\DAHA$ on the derived category of the equivariant coherent sheaves on $T^*X$. This triggers at least two problems in our considerations. The first problem is that one needs to find some class of category that cannot be the category of coherent sheaves if one wants to obtain some numerical consequence (see e.g. \cite{VV}). In particular, the tie between the algebraic and geometric constructions explained above (that was {\it not} an equivalence from the beginning) becomes even less transparent. The second problem is that one needs to be more careful about objects corresponding to the Macdonald polynomials. We have operators corresponding to $\sX$ and $\sY$ in the categorical $\DAHA$-action. The joint eigenvectors of the action of $\sX$, coming from the character twists, correspond to the torus fixed points of $X$. However, this is only via the localization theorem, and it does not have a straight-forward meaning in $\fg[z,\xi]\text{-gmod}$ (as its objects correspond to equivariant objects in $T^* X$, that cannot be supported on a point). The action of $\sY$, coming from the composition of $\sT_i$s, has some meaning in $\fg[z,\xi]\text{-gmod}$, but its geometric meaning in terms of $T^*X$ is rather unclear and not useful to analyze its eigenobjects.
	
	These motivate us to provide an algebraic framework that is morally equivalent to the derived category of $T^*X$ that captures the object corresponding to the Macdonald polynomials with respect to the action of $\sY$ (as in the treatment of Macdonald polynomials in \cite{MacDonald}).

	\subsection{Structure of the Paper}
	
	The paper is organized as follows:
	
	Section~\ref{sec::Notations} covers the foundational results and notations from Lie theory (\S\ref{sec::Lie::setup}) and (Double) Affine Hecke Algebras (\S\ref{sec::AHA::def}, \S\ref{sec::DAHA::def}). It also includes definitions of symmetric (\S\ref{sec::Macdonald::def}) and nonsymmetric (\S\ref{sec::order}) Macdonald polynomials. Additionally, we explain (\S\ref{sec::decategorify}) how the ring of characters of the Lie superalgebra of currents $\fb[\xi]$ (resp. $\fI[\xi]$) coincides with the polynomial ring $\bZ_t[\sfP]$ (resp. $\bZ_{q,t}[\sfP]$), which motivates our subsequent categorifications.
	
	The primary triangulated functors—restriction $\Res_i$, derived induction $\LInd_i$, and their composition $\sD_i:=\Res_i\circ\LInd_i$—are defined in~\S\ref{sec::Demazure::all}. We demonstrate that these functors are spherical (\S\ref{sec::sherical}) and categorify the Hecke quadratic relation. All computations in~\S\ref{sec::Demazure::all} are derived from rank~$1$ system ($\msl_2$).
	
	A categorification of the Affine Hecke Algebra is detailed in~\S\ref{sec::proof}. We begin with an explanation of the common methods in~\S\ref{sec::strategy} and prove the categorification of each equality in separate subsections.
	
	A categorification of the Cherednik symmetrization operator $\hP_{\tJ}$ is proposed in~\S\ref{sec::symm::categorify}, where we also derive different properties of the derived induction $\LInd_{\tJ}$.
	
	Section \ref{sec::DAHA::Macdonald} is dedicated to a categorification of (nonsymmetric) Macdonald polynomials. We adapt the categories and triangulated functors discussed in~\S\ref{sec::Ind} to the affine setting in~\S\ref{sec::affine::O} and~\S\ref{sec::bD_c^-}.  This section concludes with a description of the desired categorification of the nonsymmetric Macdonald polynomials for dominant weights, and together with the symmetrization functor. Therefore, we achieve a categorification of symmetric Macdonald polynomials here.
	
	The final Section~\ref{sec::sl2::MM} provides a detailed description of all objects from~\S\ref{sec::DAHA::Macdonald}, specifically for $\fg=\msl_2$. Unexpectedly, for $\msl_2$, one can categorify both symmetric and nonsymmetric Macdonald polynomials using cyclic modules, whose description is presented in~\S\ref{sec::EM::sl2}, the derived setting for $\msl_2$ case is not necessary.

	\subsection*{Acknowledgements}
	We would like to thank I.Anno, A.Bondal, B.Feigin, A.Kuznetsov for stimulating discussions.
	We thank I.Marshall and E.Feigin for useful comments on the exposition of the text.
	Ie.\,M. was partially supported by the grant RSF 19-11-00056 and JSPS KAKENHI Grant Number 18F18014. S.\,K was partially supported by JSPS KAKENHI Grant Number JP19H01782.

	\section{Notations, recollection and a categorification of $\bZ_t[\sfP]$}
	\label{sec::Notations}
	
	We work over an algebraically closed field $\Bbbk$ of characteristic zero. It is worth mentioning that many of our constructions are well-defined for positive characteristics and will be discussed elsewhere.
	
	\subsection{Notation conventions}
	\label{sec::conventions}
	It is worth mentioning to say that there are many different objects, that are denoted in the literature by the same letter.
	For example, the standard notations for the weight lattice, the Macdonald polynomial, and the Cherednik symmetrization functor use the capital letter $P$.
	We use different fonts to separate objects of different nature:
	\begin{itemize}
		\item the \emph{mathsf} font is used for root and weight lattices $\sfQ\subset \sfP$; 
		\item the font \emph{mathpzc} is used for elements of Hecke algebra: e.g. $\hT_i$, $\hP$, $\hX$, $\hY$;
		\item the \emph{ordinary} font is used for Macdonald polynomials: e.g. $P_\lambda$, $E_{\lambda}$;
		\item the \emph{bold} letters are used for the (derived) functors: e.g. $\sD_{i}$, $\sT_i$, $\sX^{\lambda}$, $\sv$;
		\item the \emph{mathbb} letters are used for modules and complexes. E.g. $\sPM_{\lambda}$, $\sEM_{\lambda}$ are symmetric and nonsymmetric Macdonald complexes.
	\end{itemize}
	
	\subsection{Setup from Lie theory}
	\label{sec::Lie::setup}
	\subsubsection{Finite-dimensional semi-simple Lie algebras}
	\label{sec::Setup::finite}
	Suppose that $\fg$ is a simple Lie algebra that is not of type $\mathsf{G}_2$. We denote its set of roots, together with a chosen set of positive roots by $\Delta=\Delta_+\sqcup \Delta_-$ ($\Delta_- = - \Delta_+$). Let $\Pi:=\{\alpha_1,\ldots,\alpha_r\}\subset \Delta_{+}$ be the set of simple roots and $\tI:=\{1,\ldots,r\}$ be the indexing set of simple roots. These data uniquely define the Cartan decomposition:
	$\fg=\fn_{-}\oplus\fh\oplus\fn_{+} = \fn_{-}\oplus\fb_{+}$ (we will mostly omit the index for the positive Borel subalgebra by identifying $\fb$ and $\fb_{+}$).
	The $\msl_2$-triple assigned to a positive root $\alpha_i\in\Delta_{+}$ is denoted by $\langle e_i=e_{\alpha_i},h_i,f_i:=e_{-\alpha_i}\rangle$, where $e_{\beta}$ is a fixed root vector for every $\beta \in \Delta$.
	
	Let $\sfP \subset \fh^*$ be the integral weight lattice of $\fg$. Let $\sfQ \subset \sfP$ be the sublattice spanned by $\Delta$ (root lattice). 	We define $\Pi^{\vee} \subset \sfQ ^{\vee}$ to be the set of positive simple coroots, and let $\sfQ_+^{\vee} \subset \sfQ ^{\vee}$ be the set of non-negative integer span of $\Pi^{\vee}$.
	Let $W$ be the Weyl group of $\fg$. We fix bijections $\tI \cong \Pi \cong \Pi^{\vee}$ so that $i \in \tI$ corresponds to $\alpha_i \in \Pi$, its coroot $\alpha_i^{\vee} \in \Pi ^{\vee}$, and a simple reflection $s_i \in W$ corresponding to $\alpha_i$. Let $\sfQ^{\vee}$ be the dual lattice of $\sfP$ with a natural pairing $\left< -, - \right> : \sfQ^{\vee} \times \sfP \rightarrow \bZ$. 
	Denote by $\sfP_{+}$ -- the set of dominant weights, i.e. $\lambda\in \sfP_{+}$ iff $\forall\alpha_i\in\Pi$ we have $\langle \lambda,\alpha_i^{\vee}\rangle\geq 0$. Let $\{\varpi_i\}_{i \in \tI} \subset \sfP_+$ be the set of fundamental weights (i.e. $\left< \al_i^{\vee}, \varpi_j \right> = \delta_{ij}$). We denote by $\bG$ the simply connected algebraic group over $\Bbbk$ whose Lie algebra is $\fg$. 
	For each $\tJ \subset \tI$, we have the root subsystem $\Delta_{\tJ}=\Delta_{\tJ}^{-}\sqcup\Delta_{\tJ}^{+}\subset\Delta$ generated by the subset of simple roots $\{\alpha_i\colon i\in\tJ\}$. Respectively, the parabolic Lie subalgebra $\fp_{\tJ} \subset \fg$ is generated by $\fb_{+}$ and the root elements $\{f_{i}\}_{i \in \tJ}$. In case $\tJ = \{ i \}$, we might write $\fp_{i}$ or $\fp_{\al_i}$ instead of $\fp_{\{i\}}$. Let $\msl_2^{i}$ denote the span of $e_i,f_i,h_i$ in $\fp_{\al_i}$, that forms a Lie subalgebra isomorphic to $\msl_2$. We have a subalgebra $\fb_i \subset \msl_2^{i}$ spanned by $e_i,h_i$. We set $\g_\tJ$ to be the maximal semi-simple Lie subalgebra of $[\fp_{\tJ},\fp_{\tJ}]$ normalized by the action of $\fh$. 
	The corresponding algebraic groups are denoted by $\bB \subset \bP_{\tJ} \subset \bG$.
	Let $\fg_{\tJ}:=\fn_{\tJ}^{-}\oplus \fh \oplus \fn_{\tJ}^{+}$ be the Cartan decomposition of the reductive Lie algrebra $\fg_{\tJ}$.
	While working with representations of parabolic Lie subalgebras we will also need the following notation:
	$\rho_{\tJ}$ is the half-sum of the positive roots of $\fg_{\tJ}$.
	
	For a $\h$-semisimple module $M$ whose $\h$-weights belongs to $\sfP$, we define the weight support $\Psi ( M )$ of $M$ as the set of $\h$-weights with non-zero weight space in $M$.
	
	\subsubsection{Affine Lie algebras}	
	\label{sec::Affine::Lie}
	Denote by $\fgh$ the untwisted affine Lie algebra corresponding to $\fg$. We set $\fg [z^{\pm 1}] := \fg \otimes \Bbbk[z^{\pm 1}]$. This acquires the natural structure of a Lie algebra. The Lie algebra $\fg [z^{\pm 1}]$ admits a natural surjection from $[\fgh, \fgh]$ by taking the quotient by the center.
	
	Let $\Delta_{\af} := \Delta \times \bZ \delta \cup \{m \delta\}_{m \neq 0}$ be the untwisted affine root system of $\Delta$ with its positive part $\Delta_+ \subset \Delta_{\af, +}$. We set $\alpha_0 := - \vartheta + \delta$, $\Pi_{\af} := \Pi \cup \{ \alpha_0 \}$, and $\tI_{\af} := \tI \cup \{ 0 \}$, where $\vartheta$ is the highest root of $\Delta_+$.
	Let $\fI \subset \fgh$ be the (image of the) upper-triangular subalgebra of $[\fgh, \fgh]$ (that contains $\fh$; the Iwahori subalgebra). We set $\bU := U ( \fI )$ (the enveloping algebra of $\fI$). For each $\tJ \subsetneq \tI_\af$, we have a Lie subalgebra $\widehat{\fp}_{\tJ} \subset \fg [z^{\pm 1}]$ generated by $\fI$ and the root subspaces of $\{- \al_i \}_{i \in \tJ}$.
	
	We set $W _{\af}:= W \ltimes \sfQ^{\vee}$ and call it the affine Weyl group. It is a reflection group generated by $\{s_i \mid i \in \tI_{\af} \}$, where $s_0$ is the reflection with respect to $\alpha_0$. We also have a reflection $s_i \in W_\af$ corresponding to $\alpha \in \Delta \times \bZ \delta \subsetneq \Delta_\af$. Let $\ell: W_\af \rightarrow \bZ_{\ge 0}$ be the length function (based on $\tI_\af$) and let $w_0 \in W$ be the longest element in the finite Weyl group $W \subset W_\af$. Together with the normalization $t_{- \vartheta^{\vee}} := s_{\vartheta} s_0$ (for the coroot $\vartheta^{\vee}$ of $\vartheta$), we introduce the translation element $t_\beta \in W _{\af}$ for each $\beta \in \sfQ^{\vee}$. We set $\Omega$ to be the group of diagram automorphism of the affine Dynkin diagram of $\g$ such that $\Omega\ltimes W_\af$ gives the extended affine Weyl group $\widetilde{W}_\af$. 
	We have
	$$\widetilde{W} _\af \cong W \ltimes \sfP^{\vee} \supset W_\af = W \ltimes \sfQ^{\vee}.$$
	Every element of $\widetilde{W}_\af$ is written as $w \pi$ for $w \in W_\af$ and $\pi \in \Omega$. As $\Omega$ preserves $\{s_i \mid i \in \tI_{\af} \}$, we define
	$$\ell ( w \pi ) = \ell ( \pi w ) := \ell ( w ).$$
	
	Recall that $\Omega$ permutes the set of level one dominant weights of $\fgh$, and their projections to $\fh^*$ induces $\Omega$-permutations of a finite subset $\Lambda_0$ of $\sfP$.
	Below we consider the level one action of $\widetilde{W}_\af$ on $\sfP$, that extends the $\Omega$-action on $\Lambda_0$:
	\[
	s_i \la := \begin{cases}\la - \left< \al_i^{\vee}, \la \right> \al_i & (i \in \tI) \\ \la + ( \left< \vartheta^{\vee}, \la \right> + 1 ) \al_i & (i = 0)\end{cases}.
	\]

	\subsection{Affine Hecke Algebra (AHA)}
	\label{sec::AHA::def}	
	With each root system $\Delta$ one assigns a braid group $B_{\Delta}$ whose set of generators $\{\hT_i | i\in \tI\}$ coincides with the generators $\{s_i | i\in\tI\}$ of the Weyl group $W$ and satisfy only the braid relations:
	\begin{equation}
		\label{eq::rel::braid0}		
		\underbrace{\hT_i \circ \hT_j \circ \ldots }_{m_{{i,j}} \text{ factors}} \simeq \underbrace{\hT_j \circ \hT_i \circ \ldots }_{m_{{i,j}}\text{ factors}} \text{ with }m_{{i,j}} = ord_{W}(s_is_j). 		
	\end{equation}
	The Hecke algebra $\hH_{\Delta}$ is the quotient of the group ring of the braid group $B_{\Delta}$ with its coefficient ring $\bZ[t,t^{-1}]$ subject to the quadratic relation:
	\begin{equation}
		\label{eq::Hecke::alg}
		\hT_i^{-1} = t^{-1} \hT_i + t^{-1} -1 \Leftrightarrow (\hT_i+1)(\hT_i-t) =0. 			
	\end{equation}
	The Affine braid group is an extension of the braid group $B_{\Delta}$ with the weight lattice $\sfP$, considered as a free abelian group\footnote{Sometimes the affine braid group of $\g$ refers to the braid group corresponding to $W \ltimes \sfQ^{\vee}$. We use the dual conventions which are more convenient for our purposes.}. We denote these translation elements $\{\hX^{\mu} |\mu\in\sfP\}$.
	Respectively, the Affine Hecke algebra is the semidirect product of the Hecke algebra $\hH_{\Delta}$ and the group ring of the weight lattice defined through the following relations:
	\begin{gather}	
		\label{eq::rel::braid1}		
		\hT_i \hX^{\mu} = \hX^{\mu} \hT_i , \text{ if } \langle\mu,\alpha_i^{\vee}\rangle =0; \\
		\label{eq::rel::braid2}	 \hT_i \hX^{\mu} = t \hX^{\mu-\alpha_i} \hT_i^{-1}, \text{ if } 	\langle\mu,\alpha_i^{\vee}\rangle =1.
	\end{gather}
	
	\begin{rem}
		There are many different renormalizations of parameters in Hecke algebras (see e.g.~\cite{Lusztig_AHA}, \cite{MacDonald}, \cite{Ch1}).
		First, in general, one uses several parameters $t_{\alpha}$ with $\alpha\in\Delta$ and $t_\alpha=t_{\sigma(\alpha)}$ for all $\sigma\in W$, that might depend on the length of a simple root. It is important for our purposes, that all $t_i$ are equal to a single variable $t$.
		Second, to have a more symmetric presentation of Hecke Relation~\eqref{eq::Hecke::alg} one  frequently uses the generators $\widetilde{\hT}_i:=t^{-1/2}\hT_i$ instead of $\hT_i$:
		\[(\widetilde{\hT}_i + t^{-1/2}) (\widetilde{\hT}_i - t^{1/2}) = 0 \Leftrightarrow (\hT_i+1)(\hT_i-t)=0\]
	\end{rem}
	The AHA has a standard (\emph{basic}) polynomial representation $\bZ_t[\sfP]:=\bZ[\sfP]\otimes\bZ[t,t^{-1}]$ (see e.g.~\cite[\S4.3]{MacDonald},~\cite[p.310]{Ch}) with the following (normalized) action of generators $\hT_i$:
	\begin{equation}
		\label{eq::T::action::Hecke}
		t s_i+\frac{1-t}{1-\hX^{\alpha_i}}(s_i-1).
	\end{equation}
	The meaning of this formula is that $(s_i-1)$ acts prior to $\frac{1-t}{1-\hX^{\alpha_i}}$ (what is important because $\hX^{\mu}$ and $s_i$ does not commute). In particular, if $\langle\mu,\alpha_i^{\vee}\rangle > 0$ we have
	\begin{multline}
		\label{eq::T::X::action::basic}	
		\hT_i (\hX^{\mu}) = t \hX^{s_i(\mu)} + (1-t)\frac{\hX^{s_i(\mu)} - \hX^{\mu}}{1-\hX^{\alpha_i}} = \\
		= t \hX^{s_i(\mu)} + (1-t)\left(\hX^{s_i(\mu)} + \hX^{s_i(\mu)+\alpha_i}+\ldots + \hX^{\mu-\alpha_i}\right).
	\end{multline}
	In particular, the action of $\hT_i+1$ in the basic representation commutes with the multiplication by the $s_i$-symmetric polynomial in $\hX$ (see e.g.~\cite{Ch}).
	
	One of the main purposes of this paper is to describe the appropriate derived category $\bD$ together with a collection of endofunctors $\{\sT_i, i\in\tI\}$ (resp. in $\tI_{\af}$), $\sX^{\mu}$ (with $\mu\in\sfP$), $\pi\in\Omega$ and $\sv$ such that the \emph{decategorification} of $\bD$ is the polynomial representation $\bZ_t[\sfP]$, and the \emph{decategorification} of endofunctors coincides with $\hT_i$, $\hX^{\mu}$ respectively.
	By decategorification, we mean taking the character of a module or the Euler characteristic of modules while dealing with complexes (see the details below).
	
	\subsection{Lie algebras of supercurrents and decatecorification}
	\label{sec::decategorify}	
	Let $\Bbbk[\xi]$ be the polynomial superalgebra in one odd variable $\xi$. We use the shorter notation $\fg[\xi]$ for the corresponding current Lie superalgebra	$\fg \otimes \Bbbk[\xi]$
	and, moreover,  for $x\in\fg$ and $p\in \Bbbk[\xi]$ we write $xp$ instead of $x \otimes p$ and $x$ stands for $x\otimes 1$. The parity of $\xi$ predicts the following decomposition of $\fg$-modules:
	$$\fg[\xi]=\fg \oplus \fg \xi,$$
	that respects the $\xi$-grading on $\fg[\xi]$ with $\deg_{\xi}(\fg)=0$ and $\deg_{\xi}(\fg\xi)=1$. A $\fg[\xi]$-module $M$ is said to be $(\fh,\xi)$-graded if $M$ admits a semi-simple $( \fh \oplus \bC \xi )$-eigendecomposition whose eigenvalues belong to $\sfP \oplus \bZ$.
	We denote by $\mathcal{O}=\mathcal{O}(\fb[\xi])$ the category of $(\fh,\xi)$-graded finitely generated $\fn_{+}$-{finite} $\fb[\xi]$--modules.
	That is, a $\bZ$-graded finitely generated $\fb[\xi]$-module $M$ belongs to $\mathcal{O}$ if it is a $\fh$-semisimple $\fb$-module with locally nilpotent actions of elements $e_i$ for all $\alpha\in\Delta_{+}$. Moreover, $M\simeq \oplus M_k$, where $M_k$ is the $k$'th graded component with respect to the $\xi$-grading and each $M_k \simeq \oplus_{\lambda\in\sfP} M_k^{(\lambda)}$ is the decomposition with respect to the $\fh$-action.
	In particular, taking the $(\fh,\xi)$-character of a module is the  \emph{decategorification} of our constructions:
	$$
	\chi: M \mapsto \sum_{\lambda,k} \dim M_{k}^{(\lambda)} \hX^{\lambda} (-t)^{k} \in \bZ_{t}[\sfP],
	$$	
	where $\bZ_{t}[\sfP]:=\bZ[\sfP]\otimes_{\bZ}\bZ[t,t^{-1}]$.
	The main object of study for us is the appropriate derived category associated with the aforementioned abelian one. If $\fg$ is finite we consider the ordinary bounded derived category $\bD^{b}(\cO(\fb[\xi]))$. 
	If $\fg$ is replaced with an affine Lie algebra, then unfortunately, the bounded derived category does not contain all objects we are interested in, however, we may consider the certain subcategory of $\bD^{-}(\cO(\fb[\xi]))$ for which we can take the character: complexes that are bounded in each $\fh$-graded component and have the total bounds on the region of $\fh$-weights (see \S\ref{sec::bD_c^-} for details). 
	\begin{prop_def}
		\label{prp::def::decategorify}	
		The category $\bD^{b}(\mathcal{O}(\fb[\xi]))$ is considered as a categorification of the polynomial ring $\bZ_{t}[\sfP]$. Where 
		\begin{itemize}
			\item 
			the Euler characteristic of characters defines a \emph{decategorification} map:
			$$
			\chi: M^{\udot} \mapsto \sum_n (-1)^{n} \chi(M^{n}) \in \bZ_{t}[\sfP].
			$$	
			\item The tensor product with a one-dimensional module $\Bbbk_{\lambda}$ defines a shift of $\fh$-grading and categorifies the monomial $\hX^{\lambda}$. The corresponding functor is denoted by $\sX^{\lambda}$ and is well defined for both abelian and derived categories;
			\item The functor of the shift of a $\xi$-grading  $M \mapsto M\{1\}$ (where $M\{1\}_k:=M_{k-1}$) is denoted by $\sv$ and categorifies the $(-t)$ element of the polynomial ring $\bZ_{t}$;
			\item The homological shift $M^{\udot}\to M[1]^{\udot}$ gives the minus sign for the decategorifications.
		\end{itemize}
	\end{prop_def}		
	\begin{proof}
		The functors $\sX^{\lambda}\sv^m$ defines the action of the free abelian group $\sfP\oplus \mathbb{Z}$ on $\cO(\fb[\xi])$:
		\[\sX^{\lambda}\sv^m \circ \sX^{\mu}\sv^n = \sX^{\lambda+\mu}\sv^{m+n} \hskip 5mm \lambda, \mu \in \sfP, m, n, \in \mathbb{Z}.
		\]
		The minus sign concerning the character of the $\xi$-grading reflects the fact, that $\xi$ is an odd variable.
	\end{proof}		
	In the subsequent section~\ref{sec::Demazure::all} we will define the endofunctors $\sT_{i}$ ($i \in \tI$) whose action categorifies the action of $\hT_i$ in the polynomial representation $\bZ_{t}[\sfP]$.
	The categorification of relation~\eqref{eq::Hecke::alg} is explained in~\S\ref{sec::sherical} while a categorification of Relations~\eqref{eq::rel::braid0},  ither on the level of characters and will explain the meaning of the categorifications of the braid relations~\eqref{eq::rel::braid0}, \eqref{eq::rel::braid1}, \eqref{eq::rel::braid2} are given in \S\ref{sec::AHA::categorify}. 
	
	\subsection{Macdonald polynomials $P_\lambda$}
	\label{sec::Macdonald::def}
	The ring of symmetric polynomials $\bZ[\sfP]^{W}$ is the ring of characters of finite-dimensional modules over a semi-simple Lie algebra $\fg$ whose Weyl group is $W$.
	The polynomials $\{ m_\lambda=\sum_{\mu\in W\lambda} \hX^{\mu}\}_{\la \in \sfP_+}$ is called the monomial basis of the ring of symmetric functions. Here $\lambda$ belongs to the set of integral dominant weights $\sfP_+$.
	I.\,G.\,Macdonald \cite{MacDonald::Hall} defined a two-parameter pairing $\langle\ttt,\ttt\rangle_{q,t}$ on $\bZ_{q,t}[\sfP]^{W}$ $(:= \bQ (q,t) \otimes_\bZ \bZ[\sfP]^{W})$ and a family of orthogonal
	polynomials $\{P_{\lambda}(q,t)\}_{\lambda\in \sfP_{+}}$ resulting from the Gram-Schmidt orthogonalization process applied to  the monomial basis concerning the standard partial ordering of the set of dominant weights:
	$$
	\langle P_\lambda, P_\mu\rangle_{q,t} = 0 \text{ if } \lambda > \mu \quad \& \quad P_{\lambda} = m_{\lambda} +\sum_{\mu<\lambda} c_{\lambda,\mu} m_{\mu}, \text{ with } c_{\lambda,\mu}\in \bQ(q,t).
	$$
	Though the ordering is partial:
	$$
	\lambda \geq \mu \Leftrightarrow \lambda- \mu = \text{ sum of positive roots},
	$$
	the set of all polynomials are still orthogonal to each other. I.e., we have $\langle P_\lambda, P_\mu\rangle_{q,t} = 0$ for $\lambda \neq \mu$ even if $\lambda$ and $\mu$ are not comparable with respect to $\ge$.
	
	One of the main purposes of this paper is to describe a possible categorification of Macdonald polynomials.
	Following the suggestion of a decategorification described in~\S\ref{sec::decategorify} we consider the category of bigraded modules over the Lie superalgebra of currents $\g\otimes\Bbbk[z,\xi]$. 
	Here the first $q$-grading is assigned to the even variable $z$
	and the second $t$-grading is assigned to the odd variable $\xi$.
	As shown in~\cite{Khor::Weyl} (see also~\cite{FGT}, \cite{Fe1}) the Euler characteristic of extension groups categorifies the Macdonald pairing $\langle\ttt,\ttt\rangle_{q,t}$: 
	\begin{equation}
		\label{eq::hom::pair::intro}
		\langle \gch(M), \gch(N)\rangle_{q,t}=
		\sum_{\begin{smallmatrix}
				i\geq 0, \\ 
				k,l\in\bZ
			\end{smallmatrix}
		} (-1)^{i+l} q^{k} t^{l} \dim Ext^{i}_{\fg\otimes\Bbbk[z,\xi]\ttt gmod}(M\{k,l\}, N^{\dual})
	\end{equation}
	whenever the right-hand side is a well-defined power series.
	Here $\gch(M)$ denotes the graded super character of a bigraded $\fh$-semisimple $\fg[z,\xi]$-module $M$, $M\{k,l\}$ denotes the graded shift of the module $M$, and $N^\dual$ denotes the restricted dual module twisted by the Cartan antiinvolution.
	
	Unfortunately, it is impossible to find a family of bigraded modules whose graded characters are equal to Macdonald polynomials $P_{\lambda}$, because it is known that $P_{\lambda}$ are not Schur-positive for $\lambda$ sufficiently large.
	Therefore, one can expect a categorification in terms of a derived category instead of an abelian one.
	For each dominant $\lambda$ 
	we construct a complex $\sPM_\lambda$ of bigraded $\fg[z,\xi]$-modules whose Euler characteristic of characters is proportional to the Macdonald polynomials $P_{\lambda}$ and whose cohomology does not contain $\fg$-submodules with weights not less than or equal to $\lambda$ (Corollary~\ref{cor::MM}).
	
	\subsection{Convex orders on $\sfP$ and nonsymmetric Macdonald polynomials $E_\lambda$}
	\label{sec::order}
	
	For $\la \in \sfP$, we set $\la_+ \in \sfP$ to be the unique dominant weight in $W \la$ and set $\la_- \in \sfP$ to be the unique anti-dominant weight. We define three partial preorders on $\sfP$ as:
	\begin{equation}
		\label{eq::order}
		\begin{array}{rl}
			\la \le \mu & \Leftrightarrow \la \in \mu - \sfQ_+\\
			\la \lhd \mu & \Leftrightarrow \la_- \neq \mu_- \text{ and } \la_- \in \mu _- + \sfQ_+\\
			\la \preceq \mu & \Leftrightarrow \la \lhd \mu, \text{ or } \la_- = \mu_- \text{ and } \la \ge \mu.
		\end{array}
	\end{equation}
	We call $\le$ the dominance order, $\prec$ the Cherednik order, and $\lhd$ the Macdonald order. Note that we have $\la \lhd \mu$ if and only if $\la \prec \mu_-$. For each $\la \in \sfP$, we consider
	\begin{align*}
		\sfP [\prec \la] & := \{ \mu \in \sfP \mid \mu \prec \la \} \subset \sfP [\preceq \la] :=\{ \mu \in \sfP \mid \mu \preceq \la \}\\
		\sfP [\lhd \la] & := \{ \mu \in \sfP \mid \mu \lhd \la\}.
	\end{align*}
	By the definition of $\lhd$, the set $\sfP [\lhd \la]$ is $W$-stable.
	
	\begin{thm}[Cherednik \cite{Ch1} \S 1]\label{convex}
		Let $i \in \tI_\af$ and $\la, \mu \in \sfP$. We have:
		\begin{enumerate}
			\item if $\mu$ and $\mu - m \al_i$ $(m \ge 0)$ belongs to $\sfP [\prec \la]$, then we have $\mu - c \al_i$ for each $0 \le c \le m$;
			\item if $\la \preceq s_i \la$, then we have $s_i \sfP [\preceq \la] \cup \sfP [\preceq \la] \subset \sfP [\preceq s_i \la]$;
			\item if $\la \succeq s_i \la$, then we have $s_i \sfP [\preceq \la] \subset \sfP [\preceq \la]$.
		\end{enumerate}
	\end{thm}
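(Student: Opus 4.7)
The strategy is to unpack the Cherednik order as a lexicographic refinement: $\mu \preceq \la$ decomposes into either (i) a strict $W$-orbit comparison $\mu_- \lhd \la_-$, or (ii) equality $\mu_- = \la_-$ refined by the dominance order $\mu \ge \la$ within the common $W$-orbit. The three statements of Theorem~\ref{convex} should follow from convexity properties of each of these two components and their compatibility under reflection by $s_i$.

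\textbf{Part (1).} I would work case-by-case on the endpoints of the $\alpha_i$-string, using the identification $\alpha_0 \equiv -\vartheta$ on $\sfP$ coming from the level-one action. If both endpoints $\mu$ and $\mu - m\alpha_i$ satisfy $(\mu - c\alpha_i)_- \lhd \la_-$ at $c = 0, m$, then the intermediate points lie in the Euclidean convex hull of the endpoints in $\h^*$, and a standard polytope argument for the anti-dominant projection forces $(\mu - c \alpha_i)_- \in \la_- + \sfQ_+$. If instead one or both endpoints lie in the $W$-orbit of $\la$ itself (so $(\mu - c\alpha_i)_- = \la_-$ and $\mu - c\alpha_i \ge \la$), then the intermediate $\mu - c\alpha_i$ lie in the dominance interval between the endpoints, reducing (1) to the classical convexity of a $W$-orbit intersected with a single $\alpha_i$-string. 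Combining the two observations yields the full statement.

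\textbf{Parts (2) and (3).} These I would deduce from (1) together with a case analysis on $\mu \in \sfP[\preceq \la]$. When $\mu \lhd \la$, the $W$-invariance of $\sfP[\lhd \la]$ (noted immediately after~\eqref{eq::order}) yields $s_i\mu \lhd \la$, and the comparison with $s_i \la$ then follows from $\la \preceq s_i \la$ in (2) and from $\la \succeq s_i \la$ in (3). The delicate case is $\mu_- = \la_-$ with $\mu \ge \la$: here $s_i\mu$ and $\mu$ are connected by an $\alpha_i$-string, and (1) lets me interpolate between them while remaining below $\la$ in (3) or below $s_i\la$ in (2). The sign of $\langle \al_i^\vee, \la \rangle$ for $i \in \tI$, or of $\langle \vartheta^\vee, \la \rangle + 1$ for $i = 0$, encodes the dichotomy $\la \preceq s_i\la$ versus $\la \succeq s_i\la$, and I would use this numerical input to place $s_i\mu$ in the correct dominance component of $\sfP[\preceq s_i\la]$ or $\sfP[\preceq \la]$.

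\textbf{Main obstacle.} The hardest step will be the mixed case in part (2) where $s_i$ transports $\mu$ across $W$-orbit boundaries: one needs to show that the new anti-dominant representative $(s_i\mu)_-$ still satisfies $(s_i\mu)_- \in (s_i\la)_- + \sfQ_+$, where $(s_i\la)_- = \la_-$. For the finite-type reflections $i \in \tI$ this amounts to a brief polytope inspection, but for $i = 0$ the level-one shift by a multiple of $\vartheta$ can jump between distinct $W$-orbits, and one must verify that the specific integer $\langle \vartheta^\vee, \mu \rangle + 1$ is small enough that the shifted anti-dominant representative does not outrun $\la_-$ in $\sfQ_+$. This is precisely the point where Cherednik's argument in~\cite{Ch1} invests most of its technical mileage, and I would follow the same case analysis.
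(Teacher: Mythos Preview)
The paper does not prove Theorem~\ref{convex}: it is stated with the attribution ``Cherednik~\cite{Ch1} \S 1'' and used as a black box throughout \S\ref{sec::DAHA::Macdonald}. There is therefore no in-paper proof to compare your proposal against.

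That said, your outline is a faithful sketch of the argument in Cherednik's original source. The decomposition of $\preceq$ into the orbit layer $\lhd$ and the in-orbit dominance refinement is exactly how the convexity is established there, and you have correctly identified the $i=0$ level-one case in part~(2) as the place where the real work happens. If you intend to fill this in rather than cite it, be aware that your ``standard polytope argument'' in part~(1) for the anti-dominant projection is not entirely innocent: the map $\mu \mapsto \mu_-$ is piecewise linear, so convexity of the image of a segment is not automatic and needs the explicit root-string description. Beyond that, your plan matches the literature and there is no gap to flag.
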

	
	For $\la \in \sfP$, let $u_\la \in W$ be the minimal length element such that $u_\la \la \in \sfP_-$.

	I.Cherednik introduced a two-parameter nonsymmetric pairing $(\ttt,\ttt)_{q,t}$ on the ring $\bZ_{q,t}[\sfP]$ and a family of orthogonal polynomials $\{E_\lambda(q,t), \lambda\in\sfP\}$ resulting the Gram-Schmidt orthogonalization process applied to the monomials $\hX^{\lambda}$ with respect to the aformentioned partial order $\prec$.
	Despite the order being partial we have
	$$
	\left( E_\lambda, E_\mu\right)_{q,t} = 0 \text{ if } \lambda\neq \mu \quad \& \quad E_{\lambda} = \hX^{\lambda} +\sum_{\mu\prec\lambda} d_{\lambda,\mu} \hX^{\mu}, 
	\text{ with }d_{\lambda,\mu}\in \bQ(q,t).
	$$
	The categorification of the pairing repeats Categorification~\eqref{eq::hom::pair::intro}. One has to consider the graded Ext-pairing of the representations of the Lie super-algebra $\fI[\xi]$ of supercurrents over the Iwahori subalgebra $\fI$ of the affine Lie algebra $\widehat\fg$:
	$$
	\left( \gch(M), \gch(N)\right)_{q,t}=
	\sum_{\begin{smallmatrix}
			i\geq 0, \\ 
			k,l\in\bZ
		\end{smallmatrix}
	} (-1)^{i+l} q^{k} t^{l} \dim Ext^{i}_{\fI[\xi]\ttt gmod}(M\{k,l\}, N)
	$$
	Here $M,N$ are finite-dimensional $\fI[\xi]$-modules and $\gch(M),\gch(N)$ are their graded characters that belongs to the ring $\bZ_{q,t}[\sfP]$.
	Having this in mind we suggest a partial answer to a categorification problem.
	In~\S\ref{sec::Y::eigen} we define complexes $\sEM_\lambda$ of $\fI[\xi]$-modules whose graded character is equal to $E_\lambda$ (for dominant weight $\lambda$).
	However, we use another definition of (nonsymmetric) Macdonald polynomials using the Double Affine Hecke Algebra recalled in the subsequent~\S\ref{sec::DAHA::def}.

	\subsection{Double Affine Hecke Algebras and $\hY$-eigen-functions}
	\label{sec::DAHA::def}
	The Double Affine Hecke Algebra ($\DAHA$) introduced by I.Cherednik is the Affine Hecke algebra associated with an affine root system extended by the group diagram automorphism $\Omega=\sfP/\sfQ$.
	This algebra acts on the so-called basic representation $\bZ_{q,t}[\sfP]$ with $\hT_i,i\in\tI_{\af}$ acting as prescribed in~\eqref{eq::T::action::Hecke}, $\hX^{\lambda}$ by multiplication, while for the imaginary root $\delta$ we have $q:=\hX^{\delta}$.
	
	One of the improtant features of $\DAHA$ that apart for $\hX^{\lambda}$ it has another family of commuting operators $\{\hY^{\mu},\mu\in\sfQ^{\vee}\}$.
	Namely, for each fundamental coweight  $\omega_i \in \Pi^{\vee}$ with $i\in\tI$  we  have the translation element $t_{\omega_i}\in W_{\af}$ with a reduced decomposition:
	\begin{equation}
		\label{eq::t=ps}
		t_{\omega_i} = \pi s_{i_1} s_{i_2} \cdots s_{i_{\ell}} \hskip 5mm i_1,\ldots,i_{\ell} \in \tI_\af, \pi \in \Omega.
	\end{equation}
	One defines the fundamental $\hY$-elements by the following assignment:
	$$\hY^{\omega_i} = \pi \circ \hT_{i_1} \circ \hT_{i_2} \circ \cdots \circ \hT_{i_\ell}.
	$$
	Cherednik showed that $\hY^{\omega_i}$ is a collection of well-defined commuting elements of $\DAHA$ and the nonsymmetric Macdonald polynomials $E_\lambda$ are the eigenfunctions for them. I.e. for any $\mu=\sum_{i\in\tI} k_i \omega_i$ with $k_i\geq 0$ we consider $\hY^{\mu}:=\prod_{i\in\tI}(\hY^{\omega_i})^{k_i}$ and we have
	$$\hY^{\mu}(E_{\lambda}) = t^{\frac{l(\mu)+\langle u_{\lambda}^{-1}(2 \rho),\mu \rangle}{2}} q^{-\langle \lambda,\mu\rangle} E_{\lambda},$$
	where $u_{\lambda}\lambda=\lambda_{-}\in\sfP_{-}$,  $2\rho=\sum_{\beta \in \Delta_+}\beta$ and $l(\mu)$ is the length of the coweight $\mu$ considered as an element of the affine Weyl group $W_{\af}$.
	Note, that all eigenvalues are monomials in $q,t$ and are well suited for a categorification given by appropriate shifts of the gradings.

	\subsection{Symmetrization operator}
	\label{sec::Sym::AHA}
	With each subset $\tJ\subset\tI$ Cherednik assigns the symmetrization operator $\hP_{\tJ}\in\DAHA$ from \cite{Ch}, page 324. In our normalization/notations it is equal to:
	\[{\hP_{\tJ}}:=\sum_{w\in W_{\tJ}} {\hT}_{w}.\]
	It is shown in \cite{Ch} that for all $i\in\tJ$ the element $\hP_{\tJ}$ satisfy the following list of properties:
	\begin{gather}
		\tag{P1}
		\label{itm::P:prop2}
		\forall i\in\tJ \text{ one has }\ \hT_i{\hP_{\tJ}}={\hP_{\tJ}}\hT_i=t{\hP}; \\
		\tag{P2} 
		\label{itm::P:prop3}
		\text{ the image of the operator ${\hP_{\tJ}}$ is contained in $W_{\tJ}$-symmetric functions; }
		\\
		\tag{P3}
		\label{itm::P:prop5}
		\hP_{\tJ} f = f \hP_{\tJ} \text{ whenever } f \text{ is a $W_{\tJ}$-symmetric function.}
	\end{gather}
	Note that any operator that satisfy properties \ref{itm::P:prop2}--\ref{itm::P:prop5} is proportional to a projector on the ring of $W_{\tJ}$-symmetric functions. Therefore, it may differ from the 
	symmetrization operator $\hP_J$ by a multiplication with a $W_{\tJ}$-invariant function.
	To normalize it we recall more property of the symmetrization operator that kill this freedom:
	\begin{gather} 
		\tag{P4} \label{itm::P:prop4} \begin{array}{l}
			\text{If the weight $\mu$ is contained in the convex hall of the $W_{\tJ}$-orbit of the $\tJ$-dominant }
			\\
			\text{ weight $\lambda$, then the support of ${\hP_{\tJ}} \hX^{\mu}$ is contained
				in this convex hall;} 
		\end{array} 
		\\
		\tag{P5}	
		\label{itm::P:prop6}
		\hP_{\tJ}(\hX^{\lambda}) = \hX^{\lambda} + \sum_{\mu\prec\lambda} \hX^{\mu} c_{\lambda,\mu}(t), \text{ for $\lambda$ sufficiently big and dominant};
		\\
		\tag{P6}
		\label{itm::P:prop::0}
		\hP_{\tJ}(1) = \sum_{\omega\in W_{\tJ}} t^{l(\omega)}.
	\end{gather}	
	Note, that in order to normalize the symmetrization operator Property~\eqref{itm::P:prop6} is sufficient as well as Property~\eqref{itm::P:prop::0} is sufficient.

	Finally, one of the main applications of the symmetrization operator is that for a maximal proper subset $\tJ=\tI\subset\tI_{\af}$ it maps the nonsymmetric Macdonald polynomials to the symmetric one:
	$$
	\forall \lambda\in\sfP \ \ 
	\hP_{\tI}(E_{\lambda}) \text{ is proportional to } P_{\lambda_{+}}.
	$$
	Moreover, Macdonald polynomial $P_{\lambda_{+}}$ is an eigen-function for the action of any $W$-symmetric polynomial in $\hY$'s, however, the eigenvalue will be no more a monomial in $q,t$ and we do not know a reasonable categorical meaning of this statement at the moment.

	\section{Demazure endofunctors $\sD_i$ -- Derived superinduction}
	\label{sec::Demazure::all}
	\subsection{The main heroes of a categorification}
	\label{sec::Demazure::functor::def}	
	Recall that $\fp_{\tJ}=\fn_{\tJ}\oplus\fb$ denotes the parabolic subalgebra generated by the set $\{f_i\colon i\in\tJ\}$ and Borel subalgebra $\fb$.
	Denote by $\fp_{\tJ}[\xi]$ the corresponding $\xi$-graded Lie superalgebra of currents and let us denote by $\cO(\fp_{\tJ}[\xi])$ the category of $(\fh,\xi)$-graded, $\fp_{\tJ}$-integrable finitely generated $\fp_{\tJ}[\xi]$-modules.
	The $\xi$-graded embedding of Lie superalgebras:
	$$
	\fb[\xi] \subset \fp_{\tJ}[\xi]
	$$
	defines the restriction functor 
	\begin{equation}\label{Resfunctor}
		\Res_{\tJ}: \mathcal{O}(\fp_{\tJ}[\xi]) \rightarrow \mathcal{O}({\fb}[\xi])
	\end{equation}
	which is always exact, because it does not do anything with the underlying vector space.
	Functor $\Res_{\tJ}$ admits left and right adjoints which we denote by $\Ind_{\tJ}$ and $\Coind_{\tJ}$ following the classical Frobenius reciprocity principle:
	\begin{equation}
		\label{eq::res_ind_adjunction}
		\begin{tikzcd}
			\cO({\fb}[\xi])
			\arrow[rr, shift left = 3, "\Ind_{\tJ}"]
			\arrow[rr, shift right = 3, "\Coind_{\tJ}"', "\perp" near end]
			&&
			\cO(\fp_{\tJ}[\xi])
			\arrow[ll, "\Res_{\tJ}" description,"\perp"' near start]
		\end{tikzcd}
	\end{equation}
	Note, however, that integrability affects the story in the following way: the left adjoint functor $\Ind_{\tJ}$ maps $\fb[\xi]$-module $M$ to the maximal $\fg_{\tJ}$-integrable $\fp_{\tJ}$-quotient of the induced module):
	\begin{equation}\label{Indfunctor}
		\Ind_{\tJ}: M \mapsto U(\fp_{\tJ}[\xi]) \otimes_{U(\fb[\xi])} M/\sim,
	\end{equation}
	respectively the right adjoint functor $\Coind_{\tJ}(N)$ is the maximal $\fp_{\tJ}$-integrable submodule in the coinduced  module $\Hom_{\fb[\xi]}(U(\fp_{\tJ}[\xi]),N)$.
	This section is devoted to the particular case $\tJ=\{i\}$ which restrict all computations to the case of $\fg=\msl_2$.
	and we simplify the notations to $\Res_{i}$, $\Ind_{i}$ and $\Coind_i$ correspondingly. 
	The rank two case $\tJ=\{i,j\}$ is used in~\S\ref{sec::braid}.
	The case of generic $\tJ$ is described in details in~\S\ref{sec::symm::categorify}.
	
	For algebraic groups the corresponding induction functor is known under the name \emph{Induction} (see e.g. \cite{Yantzen}), one also uses the name \emph{Zuckermann functor} (see e.g. \cite{Knapp_Vogan}).
	Below we explain (Corollary~\ref{cor::H::Ind}) that the corresponding left (right) derived functors $\LInd_i$ and $\RCoind_i$ are well defined in the bounded derived category.
	We show that the functor $\Res_i$ is spherical in the sense of~\cite{Anno} (Proposition~\ref{prp::spherical}) and use the standard terminology of \emph{twist} and \emph{cotwist} functors assigned to spherical functors:
	\begin{notation}
		\label{not::demazure::twist}
		\begin{itemize}	
			\setlength{\itemsep}{0em}	
			\item
			The endofunctors $\Res_i\circ\LInd_i, \Res_i\circ\RCoind_i\in End(\bD^b(\cO(\fb[\xi])))$ are denoted by $\sD_i$ (resp. $\sD_i'$) and are  called  \emph{Demazure} (resp. \emph{dual Demazure}) functors;
			\item
			The mapping cone of the unit morphism $\Id \stackrel{\eta}{\rightarrow} \Res_i\circ\LInd_i$ assigned with adjunction~\eqref{eq::res_ind_adjunction} between restriction and induction functors is called the \emph{Demazure twist} and is denoted by $\sT_i$;
			\item The mapping cocone (=shifted cone) of the counit morphism $\Res_i\circ \RCoind_i \stackrel{\varepsilon}{\rightarrow} \Id$ is called the \emph{Demazure cotwist} and is denoted by $\sT_i'$.		
		\end{itemize}
	\end{notation}
	In other words, we have a pair of adjoint exact triangles of endofunctors of the triangulated category $\bD^b(\cO(\fb[\xi]))$:
	\[
	\Id_{\fb[\xi]} \stackrel{\eta}{\to} \sD_i \to \sT_i \stackrel{+1}{\to}  \Id_{\fb[\xi]}[1], \qquad
	\sT_i' \to \sD'_i \stackrel{\varepsilon}{\to} \Id_{\fb}  \stackrel{+1}{\to} \sT'_i[1]
	\]
	
	In the remaining subsections of this section, we explain the following details.
	\S\ref{sec::sl_2} describes induction without supercurrents. \S\ref{sec::super::Ind} highlights the differences between even and odd inductions. The primary property of being a spherical functor is discussed in~\S\ref{sec::sherical}. We conclude with subsection~\ref{sec::Demazure::modules}, where we describe the superposition of (nonderived) superinductions applied to a sufficiently dominant weight vector. The corresponding modules generalize the Demazure modules.

	\subsection{The derived induction for $\msl_2$}
	\label{sec::sl_2}
	From now on till the end of this section we fix a simple root $\alpha_i\in\Pi$, $i\in\tI$ and assign to the $\msl_2^{i}$-triple $\langle e,h,f\rangle$, its Borel subalgebra  $\fb_{i}:=\langle e_{i},h_{i}\rangle$ and the corresponding minimal parabolic subalgebra: 
	$$\fp_{i}:=\msl_2^{i} + \fb \simeq \langle f_i \rangle \oplus \fb.$$
	The category of $\msl_2^{i}$-integrable modules is equivalent to the category of rational $\mSL_2$-modules. 
	Following the ideas announced in the preceding~\S\ref{sec::Demazure::functor::def} the restriction functor $\Res_{i}^{\bar{0}}:\cO(\msl_2^{i}\to \cO(\fb_{i})$ admits the left adjoint induction functor $\Ind_{i}^{\bar{0}}$ and the right adjoint coinduction functor $\Coind_{i}^{\bar{0}}$. 
	The upper index $\bar{0}$ underlines that this is the even (co)induction, while the odd (co)induction will be considered in the subsequent~\S\ref{sec::super::Ind}.
	Recall, that $\Ind_i^{\bar{0}}$ assigns to a $\fb_i$-module $M$ the maximal $\msl_2^{i}$-integrable quotient of the corresponding induced module $U(\msl_2^{i})\otimes_{U(\fb_i)}M$, respectively, $\Coind_i^{\bar{0}}(M)$ is the maximal $\msl_2^{i}$-integrable submodule of $\Hom_{U(\fb_i)}(U(\msl_2^{i}),M)$.
	The functor $\Ind_i^{\bar{0}}$ has a presentation via the induction for groups (see~\cite{Yantzen}).
	Note that we have the full description of the abelian category $\cO(\fb_i)$. Namely, each indecomposable $h_i$-semisimple $e_i$-finite $\fb_i$-module admits a cyclic vector $v_n$ and is uniquely defined by the lowest and the highest weights:
	\begin{equation}
		\label{eq::V_ab}
		\begin{array}{c}
			V_{n,m}:=\mathbb{C}[e_i]/(e_i^{\frac{m-n}{2}+1}) v_{n} = \langle v_{n}, e_i v_{n}, e_i^2 v_{n}, \ldots e_i^k v_{n} = v_{m} \rangle, \\
			\ h_iv_{n} = n v_{n}, \ n \in \bZ \ \text{with} \ k=\frac{m-n}{2}\in \mathbb{N}.
		\end{array}
	\end{equation}
	Each indecomposable $\msl_2^{i}$-integrable module is irreducible and is indexed by its highest weight:
	\[ L(n):= \langle v_{n}, f_i v_{n}, \ldots, f_i^{n}v_{n} =v_{-n}\rangle
	= \langle v_{-n},e_i v_{-n}, \ldots, e_i^n v_{-n}\rangle, \ n \in \bZ_{\ge 0}. \]
	\begin{lem}
		\label{lm::LInd::sl2}
		The left derived functor $\mathbf{L}^{\udot}\Ind_{i}^{\bar{0}}$ of the induction functor has the following description on the indecomposable $\fb$-modules:
		\begin{gather*}
			{			\mathbf{L}^0\Ind_{i}^{\bar{0}}(V_{n,m}) = \Ind_{i}^{\bar{0}}(V_{n,m}) =
				\begin{cases} \oplus_{k=0}^{\frac{m-|n|}{2}} L(m- 2k), \text{ if } m \geq |n|  \geq 0; \\
					0, \text{ otherwise. }
			\end{cases} } 
			\\
			{			\mathbf{L}^{-1}\Ind_{i}^{\bar{0}}(V_{n,m}) =
				\begin{cases} 
					\oplus_{k=1}^{-\frac{n+m}{2} - 1} L(-n -2k) , \text{ if } m\geq -2\ \& \ n+m \leq 0; \\
					\oplus_{k=1}^{\frac{m-n}{2} +1}  L(-n - 2k), \text{ if } m < -2, \\
					0, \text{ otherwise. }
			\end{cases} } \\
			\mathbf{L}^{s}\Ind_{i}^{\bar{0}}(V_{n,m})=0 \text{ for }s\neq 0,-1.
		\end{gather*}
	\end{lem}
	\begin{proof}
		The proof is a straightforward computation.
		First, we compute the induction functor for one-dimensional modules whose description follows from the known description of $\msl_2$ Verma modules:
		$$
		\Ind_{i}^{\bar{0}}(V_{n,n}) = \begin{cases}
			L(n), \text{ if } n\in \bZ_{\geq 0}; \\
			0, \text{ otherwise. }
		\end{cases}
		$$  
		Second, we compute recursively the (derived) induction for arbitrary indecombosable module $V_{n,m}$ using the short exact sequences for different integer numbers $n,m$ and $k$ of the same parity:
		\begin{equation}\label{eq::sl_2::indecomp} 0 \to V_{m+2,k} \to V_{n,k} \to V_{n,m} \to 0 \text{ with } k > m \geq n. 
		\end{equation}
	\end{proof}	
	Note, that the category $\cO(\fb_{i})$ does not contain enough projectives, however,  this category contains enough $\Ind_{i}^{\bar{0}}$-acyclic objects since $\mathbf{L}^{-1}\Ind_{i}^{\bar{0}} (V_{n,m})=0$ whenever $n+m \geq 0$. Moreover, for $k>-n$  the short exact sequence~\eqref{eq::sl_2::indecomp} is an $\Ind_{i}^{\bar{0}}$-acyclic resolution of an indecomoposable module $V_{n,m}$.

	The following technical Lemma is very useful for further computations with induction functors
	\begin{lem}
		\label{lem::tensor::ind}	
		If $M$ is a finite-dimensional $\msl_2$-module then there is a natural isomorphism of additive functors from $\cO(\fb)$ to $\cO(\msl_2)$:
		\begin{equation}
			\label{eq::tensor::ind}
			\Ind(\Res(M)\otimes -) \simeq M\otimes\Ind(-),	
		\end{equation}
		what induces the isomorphism of derived functors $\LInd(M\otimes -) \stackrel{\simeq}{\longrightarrow} M\otimes\LInd(-).$	
	\end{lem}	
	\begin{proof}
		Note that for any pair of finite-dimensional $\msl_2$ modules $M$ and $K$ and an $\fb$-module $N$ we have an natural isomorphism of vector spaces based on the adjunction between induction and restriction:
		\begin{multline*}
			\Hom_{\cO(\msl_2)}(\Ind(\Res(M)\otimes N), K) \simeq 
			\Hom_{\cO(\fb)}(\Res(M)\otimes N, \Res(K)) \simeq 
			\\ \simeq
			\Hom_{\cO(\fb)}(N, (\Res(M))^{*}\otimes \Res(K)) \simeq 
			\Hom_{\cO(\fb)}(N, (\Res((M)^{*}\otimes  K)) \simeq
			\\
			\simeq
			\Hom_{\cO(\msl_2)}(\Ind(N), M^{*}\otimes K) \simeq 
			\Hom_{\cO(\msl_2)}(M\otimes\Ind(N), K).
		\end{multline*}
		Here $M^*$ denotes the linear dual module $\Hom(M,\Bbbk)$.
		In other words both functors in~\eqref{eq::tensor::ind} are the left adjoint to the isomorphic exact functors 
		$\Res(M^{*})\otimes\Res(-) \simeq \Res(M^{*}\otimes-)$ and, consequently, these functors are isomorphic, because the left adjoint if exists is unique.
		
		The same arguments can be used to show the isomorphism of the corresponding left derived functors.
	\end{proof}	
	
	Let us also outline the description of the coinduction functor:
	\begin{lem}
		\label{lem::RCoind::sl_2}
		The right derived functor $\RCoind_{i}^{\bar{0}}: \cO(\fb_{i})\to\bD^b( \cO(\msl_2^{i}) )$ has the following description on indecomposable $\fb_i$-modules:
		\begin{gather*}
			\mathbf{R}^{0}\Coind_{i}^{\bar{0}}(V_{n,m}) = \Coind_{i}^{\bar{0}}(V_{n,m}) =
			\begin{cases}
				\oplus_{k=0}^{-\frac{n+|m|}{2}} L(-|m|+2k), \text{ if }m\leq 0 \text{ or }m\geq 0\geq m+n;\\
				0, \text{ if } m>0\ \& \ m+n>0
			\end{cases}
			\\
			\mathbf{R}^{1}\Coind_{i}^{\bar{0}}(V_{n,m}) = 
			\begin{cases}
				\oplus_{k=0}^{\frac{m-n}{2}} L(m-2k-2), \text{ if } n\geq 2; \\
				\oplus_{k=1}^{\frac{m+n}{2}-1}L(m-2k), \text{ if } m+n\geq 4 \ \& \ n<2; \\
				0, \text{ otherwise. }
			\end{cases}
		\end{gather*}    
	\end{lem}
	\begin{proof}
		A straightforward computation similar to the one we suggested for Lemma~\ref{lm::LInd::sl2}.
	\end{proof}
	
	\begin{cor}
		We have an equivalence of functors:
		\begin{equation}
			\label{eq::coind=ind::0}
			\RCoind_{i}^{\bar{0}} \simeq \LInd_{i}^{\bar{0}}\circ \sX^{-\alpha_i}[-1]. \end{equation}    
	\end{cor}
	\begin{proof}
		The computations we made in Lemma~\ref{lm::LInd::sl2} and Lemma~\ref{lem::RCoind::sl_2} shows that for any $\fb_i$-indecomposable module $V_{m,n}$ we have an isomorphism:
		\begin{equation}
			\label{eq::ind::coind::indec}
			\mathbf{R}^{s}\Coind_{i}^{\bar{0}}(V_{n,m}) \simeq \mathbf{L}^{s-1}\Ind_{i}^{\bar{0}}(V_{n-2,m-2}).
		\end{equation}
		It is a straightforward computation that shows that this Isomorphism~\eqref{eq::ind::coind::indec} is functorial and, moreover, is compatible with the tensor product with finite-dimensional $\msl_2^i$-modules.    
	\end{proof}

	It is worthmentioning that one can repeat all the story for the embedding of Lie algebras $\fb\hookrightarrow\fp_i$ instead of the one we used $\fb_i\hookrightarrow \msl_2^{i}$. Finally, we end up with the 
	restriction functor $\Res_{i}^{\bar{0}}:\cO(\fp_i)\to\cO(\fb)$ and its left and right (derived) adjoint functors which we denote by the same letters $\LInd_{i}^{\bar{0}}$ and $\RCoind_{i}^{\bar{0}}$ respectively.

	\begin{lem}
		For all $\mu\in \sfP$ such that $\langle\mu,\alpha_i\rangle =1$ 
		there exists a natural transformation $\varphi$ of the abelian and (consequently) the corresponding derived functors:   
		\begin{equation}
			\label{eq::shift::Ind:0}
			\varphi:\Res_{i}^{\bar{0}}\circ\Ind_{i}^{\bar{0}}\circ \sX^{\mu} \rightarrow \sX^{\mu-\alpha_i} \circ \Res_{i}^{\bar{0}}\circ\Ind_{i}^{\bar{0}}. 
		\end{equation}
	\end{lem}
	\begin{proof}
		The proof of this Lemma also reduces to the computations for $\fb_i$ and $\msl_2^{i}$-modules.
		Instead of working with the shift endofunctor $\sX^{\mu}$ of the bigger category $\cO(\fb)$ we will deal with the weight shift endofunctor of $\sX\in\End(\cO(\fb_{i}))$ that maps the indecomposable $\fb_{i}$-module $V_{n,m}$ to the indecomposable module with the shifted weight $V_{n+1,m+1}$.
		Then for all integer $m\geq 0$ we have a surjective map of $\fb_i$-modules with one-dimensional kernel $V_{m+1,m+1}$:
		$$
		\varphi:\sX (L(m+1))=V_{-m,m+2} \twoheadrightarrow V_{-m,m} = L(m) \ \Leftrightarrow \  \varphi: L(m+1) \twoheadrightarrow \sX^{-1} (L(m))$$
		Consequently, for all pairs of integer numbers $m, n$ of the same parity such that $m\geq |n|$ we have a surjective map of $\fb_i$-modules:
		\begin{multline*}
			\Res_{i}^{\bar{0}}(\Ind_{i}^{\bar{0}}(\sX(V_{n,m})) = 
			\Res_{i}^{\bar{0}}(\Ind_{i}^{\bar{0}}(V_{n+1,m+1})) = 
			\oplus_{k=0}^{\frac{m+1-|n+1|}{2}} L(m+1-2k) 
			\stackrel{\varphi}{\twoheadrightarrow}
			\\
			\twoheadrightarrow 
			\oplus_{k=0}^{\frac{m+1-|n+1|}{2}} \sX^{-1} (L(m-2k))
			\twoheadrightarrow \sX^{-1} \left(\oplus_{k=0}^{\frac{m-|n|}{2}} L(m-2k)\right) \simeq
			\sX^{-1}\circ\Res_{i}^{\bar{0}}\circ\Ind_{i}^{\bar{0}} V_{n,m}.  
		\end{multline*}
		There is nothing to prove for indecomposable $\fb_i$-modules $V_{n,m}$ such that $\Ind_{i}(V_{n,m})=0$. Thus, we end up with a map $\varphi$ defined for indecomposable functors and it remains to check that it is compatible with morphisms in order to claim that this is a functor. This is a straightforward computation whose details we leave for the reader.
	\end{proof}
	
	\begin{cor}
		\label{cor::Ind:0::X}
		For all $\mu\in\sfP$ with $\langle\mu,\alpha_i^{\vee}\rangle=1$ there is a natural transformation of triangulated endofunctors:
		$$\varphi:\Res_{i}^{\bar{0}}\circ\LInd_{i}^{\bar{0}}\circ \sX^{\mu} \rightarrow \sX^{\mu-\alpha_i} \circ \Res_{i}^{\bar{0}}\circ\LInd_{i}^{\bar{0}}.  
		$$    
	\end{cor}

	\subsection{The derived superinduction $\LInd_i$}
	\label{sec::Ind} \label{sec::super::Ind}
	Let us fix a simple positive root $\alpha_i\in \Pi$ and assign to it a minimal parabolic subalgebra $\fp_i$. Denote by $\fp_i^{\bar{1}}$ the intermediate Lie subalgebra spanned by $\fb$ and $\fp_i\xi$:
	\[\fb[\xi]\subset \fp_i^{\bar{1}} \subset \fp_i[\xi].\]
	The quotient $\fp_i^{\bar{1}}/\fb[\xi]$ is a one-dimensional odd space spanned by $f_i\xi$. Respectively $\fp_i[\xi]/\fp_i^{\bar{1}}$ is even and is spanned by $f_i$.
	The (co)induction functors admit a decomposition into consecutive (co)inductions:
	\begin{equation}
		\label{Indfunctor1}
		\begin{tikzcd}
			\cO(\fb[\xi])
			\arrow[rr,shift left = 1,"\Ind_i^{\bar{1}}"]
			\arrow[rrrr, bend left, shift left = 1, "\Ind_i"]
			&& \cO(\fp_i^{\bar{1}})
			\arrow[ll,shift left = 1,"\Res_i^{\bar{1}}"]
			\arrow[rr,shift left = 1,"\Ind_i^{\bar{0}}"] && \cO(\fp_i[\xi])
			\arrow[ll,shift left = 1,"\Res_i^{\bar{0}}"]
			\arrow[llll, bend right, shift left = 1, "\Res_i"]
		\end{tikzcd}
	\end{equation}
	The key advantage of this decomposition is that the odd induction $\Ind_i^{\bar{1}}$ is an exact functor and, therefore, we have an isomorphism of derived and nonderived one:
	$$
	\LInd_i^{\bar{1}}(M) = \Ind_i^{\bar{1}}(M).
	$$
	This happens because the universal enveloping algebra of an odd Lie one-dimensional superalgebra $\langle f_i\xi\rangle$ is spanned by two elements $1,f_i\xi$. Consequently, $\dim (\Ind_i^{\bar{1}}M) = 2 \dim M$ and we have a short exact sequence of $\fb[\xi]$-modules:
	\begin{equation}
		\label{eq::Ind_odd::h-grad}
		0\to M \to \Ind_i^{\bar{1}}M = U(\fp_i^{\bar{1}})\otimes_{U(\fb[\xi])} M \to (f_i\xi) M \to 0.
	\end{equation}
	Here \eqref{eq::Ind_odd::h-grad} is interpreted as a sequence of morphisms of functors that induces a distinguished triangle:
	\begin{equation}
		\label{eq::Ind::od::proj}
		\Id_{\fb[\xi]} \stackrel{\eta^{\bar{1}}}\longrightarrow \Res_i^{\bar{1}}\circ\Ind_i^{\bar{1}} \longrightarrow \sX^{-\alpha_i}\sv.
	\end{equation}
	The same arguments shows that the odd restriction functor $\Res_i^{\bar{1}}$ admits the right adjoint coinduction functor which is also exact and fits in a similar distinguished triangle:
	\begin{equation}
		\label{eq::CoInd::od::proj}
		\RCoind_{i}^{\bar{1}}(M)=\Coind_i^{\bar{1}}(M) = \Hom_{U(\fb[\xi])}(U(\fp_i^{\bar{1}}),M) \ \Rightarrow \ 
		\sX^{\alpha_i} \sv^{-1} \rightarrow \Res_i^{\bar{1}}\circ\Coind_i^{\bar{1}} \stackrel{\varepsilon^{\bar{1}}}{\rightarrow} \Id_{\fb[\xi]}.
	\end{equation}
	The odd induction and coinduction functors commutes with the shift grading functors $\sX^{\mu}$ and thanks to distinguished triangles~\eqref{eq::Ind::od::proj} and~\eqref{eq::CoInd::od::proj} we end up with isomorphisms: 
	\begin{equation}
		\label{eq::Ind::Coind::odd}
		\Ind_i^{\bar{1}} \simeq 
		\Coind_i^{\bar{1}}\circ \sX^{-\alpha_i}\sv \simeq \sX^{-\alpha_i}\sv \circ \Coind_i^{\bar{1}}. 
	\end{equation}
	
	\begin{rem}
		The short exact sequence~\eqref{eq::Ind_odd::h-grad} may not split in general. For example, consider $\fg=\msl_2$ and two-dimensional cyclic module  $M=U(\fb[\xi])v/(e v , h v, (e\xi)v)$ whose basis is  $\langle v, (h\xi)v\rangle$. The module $\Ind_i^{\bar{1}}M$ is the cyclic module generated by the same cyclic vector $v$ subject to the same set of relations $ev=hv=(e\xi)v = 0$ and has the basis $\langle v,(f\xi)v, (h\xi)v, (f\xi)(h\xi)v\rangle$. We have  $e(f\xi)v=(h\xi)v\neq 0$ and on the other hand, the action of $e$ on $M$ is trivial which implies that $\Ind_i^{\bar{1}}M$ is not isomorphic to the direct sum of $M$ and $M$ shifted.
	\end{rem}	
	Let us outline in one theorem the properties of the derived superinduction and coinduction functors:
	\begin{thm}
		\label{cor::H::Ind}			\label{cor::sl2::tensor}
		\begin{enumerate}
			\item	
			\label{thm::item::LInd::1}
			The right exact induction functors $\Ind_i:\cO(\fb[\xi])\to \cO(\fp_i[\xi])$, as well as the left exact coinduction functor $\Coind_i$ admit derived functors 
			$$\LInd_i,\RCoind_i:\bD^{b}(\mathcal{O}(\fb[\xi])) \to \bD^{b}(\mathcal{O}(\fp_i[\xi]));$$
			\item 
			\label{thm::item::LInd::2}
			Each graded $\fb[\xi]$-module $N$ admits $\Ind_i$-acyclic and $\Coind_i$-acyclic resolutions of length at most $1$, what follows that 		
			$$
			\mathbf{L}^{<-1}\Ind_i(N) = \mathbf{R}^{>1}\Coind_i(N) =0;
			$$	 
			\item 
			\label{thm::item::LInd::4}
			The (derived) (co)induction $\Ind_i$ (resp. $\LInd_{i}$) commute with the tensor product with a finite-dimensional $\fp_i[\xi]$-module $M$:
			\begin{equation}
				\label{eq::tensor::compat}
				\begin{array}{c}			
					(\mathbf{L})\Ind_i( \Res_i(M)\otimes - ) \simeq M \otimes (\mathbf{L})\Ind_i(-). 
					\\
					(\mathbf{R})\Coind_i( \Res_i(M)\otimes - ) \simeq M \otimes (\mathbf{R})\Coind_i(-); 
				\end{array}			
			\end{equation}
			\item 
			\label{thm::item::LInd::5}
			The derived induction and coinduction are isomorphic modulo certain shift functors:
			\begin{equation}
				\label{eq::ind::coind::iso}
				\LInd_{i} \simeq  \RCoind_{i}\circ \sv[1]. 
			\end{equation}
		\end{enumerate}
	\end{thm}	
	\begin{proof}		
		The proof of the theorem is based on the computation done for the (derived) induction $\fg=\msl_2$ in the preceding Subsection~\S\ref{sec::sl_2}. We just have to take into account the odd induction $\Ind_i^{\bar{1}}$ (which does not affect the derived story because it is an exact functor) and the extra action of the Borel subalgebra $\fb$ on all modules.

		Note that functors $\Ind_i$ (respectively $\Coind_i$) are left (resp. right) adjoint to the restriction functor $\Res_i$. Therefore, the corresponding derived functors are well defined whenever the underlying abelian category contains enough projective (respectively enough injective) objects. Unfortunately, this is not the case and this category does not contain either projective or injective objects. However, let us explain that this category contains enough $\Ind_i$-acyclic and $\Coind_i$-acyclic objects and describe an acyclic resolution for any $M\in\cO(\fb[\xi])$.
		
		Denote by $L^{\alpha_i}({r})$ the following $\fp_i^{\bar{1}}$-module generated by a cyclic vector $v$, subject to the following list of relations
		\[
		\fh v = (\fg\xi) v = e_j v = e_i^{r+1}v = 0, \text{ with } j\neq i.
		\]
		The module $L^{\alpha_i}({r})$ is of dimension $r+1$ and has a basis $\{(e_i)^{m}v | 0\leq m \leq r\}$.
		The submodule generated by $e_iv$ is isomorphic to the $\alpha$-shifted module
		$\sX^{\alpha_i}( L^{\alpha_i}({r-1})) = L^{\alpha_i}(r-1)\otimes_{\Bbbk} \Bbbk_{\alpha_i}$.
		Thus we have a short exact sequence of  $\fp_i^{\bar{1}}$-modules:
		\begin{equation}
			\label{eq::sl2::resol}
			0 \to \sX^{\alpha_i}( L^{\alpha_i}({r-1})) \to L^{\alpha_i}(r) \to \Bbbk \to 0.
		\end{equation}
		We claim that the tensor product over the base field $\Bbbk$ of any given {finite-dimensional} $\fp_i^{\bar{1}}$-module (resp. $\fb[\xi]$-module) $M$ with the short exact sequence~\eqref{eq::sl2::resol} defines a two-term $\Ind_i^{\bar{0}}$-acyclic resolution of $M$ for $r \gg 0$:
		\[
		0\to M\otimes_{\Bbbk} \sX^{\alpha_i}( L^{\alpha_i}({r})) \to M\otimes_{\Bbbk} L^{\alpha_i}({r+1}) \to M \to 0.
		\]
		This follows from the $\msl_2$-computation, because the tensor product of $\fb_i$-module $V_{n,m}$ and $L^{\alpha_i}(r)$ is isomorphic to
		$$\bigoplus_{k=0}^{m-n} V_{n+k,r+m-k} \hskip 5mm \text{whenever} \hskip 5mm r>\max \{ 2 (m-n), m+n \}$$
		as $\mathfrak{sl}_2^i$-module, that is 
		$\LInd_i$-acyclic by Lemma~\ref{lm::LInd::sl2}.
		
		The same arguments show that each module admits a two-term $\Coind_i$-acyclic resolution what follows items \eqref{thm::item::LInd::1}, \eqref{thm::item::LInd::2}.
		
		The proof of Item~\eqref{thm::item::LInd::4} repeats the proof of Proposition~\ref{lem::tensor::ind}. 
		Indeed, for a finite-dimensional module $M$, its linear dual module $M^*$ is well defined and the functors in~\eqref{eq::tensor::compat} are left (resp. right) adjoints to the isomorphic exact functors 
		$$
		\Res_{i}(M^*)\otimes \Res_{i}(-) \simeq \Res_{i}(M^*\otimes -).
		$$
		
		The last item follows from the similar isomorphisms we proved for even and odd inductions:
		\begin{multline*}
			\LInd_{i}=\LInd_{i}^{\bar{0}}\circ\LInd_i^{\bar{1}} \stackrel{\eqref{eq::Ind::Coind::odd}}{\simeq} \LInd_i^{\bar{0}}\circ(\sX^{-\alpha_i}\sv \circ\RCoind_i^{\bar{1}})
			\simeq \\
			\stackrel{\eqref{eq::coind=ind::0}}{\simeq}(\RCoind_{i}^{\bar{0}}\circ\sX^{\alpha_i}[1])\circ(\sX^{-\alpha_i}\sv\circ\RCoind_i^{\bar{1}}) \simeq 
			\RCoind_i^{\bar{0}}\circ\RCoind_i^{\bar{1}}\circ\sv[1] \simeq \RCoind_i\circ\sv[1].
		\end{multline*}
		These complete the proofs.
	\end{proof}

	We finish this subsection with the detailed description of the Demazure endofunctors $\sD_i=\Res_i\circ\LInd_i$, $\sD_i':=\Res_i\circ\RCoind_i$ and the autoequivalences 
	$$\sT_i:=\cone(\Id\to\Res_i \circ\LInd_i), \quad\sT_i':=\cone(\Res_i\circ\RCoind_i\to\Id)[-1]$$ in particular cases:
	\begin{example}	
		\label{ex::sl_2::T_i}	
		Let $\Bbbk_{\lambda}$ be an irreducible one-dimensional $\fb$-module with $\fh$-action by the given weight  $\lambda\in\sfP$.
		
		Suppose that $\langle\lambda,\alpha_i^{\vee}\rangle = n\geq 2$  then $\sD_i(\Bbbk_{\lambda})$ is a module concentrated in $0$'th homological degree:
		\begin{equation}
			\sD_i(\Bbbk_{\lambda}) = \quad
			\begin{tikzpicture}[scale=0.5]
				\node[ext] (v0) at (-8,0) {\tiny{$f_i^{n} v_{\lambda}$}};
				\node (a0) at (-6,0) {\small{$e_i$}};
				\node[int] (v1) at (-4,0) {};
				\node (a1) at (-2,0) {\tiny{\ldots}};
				\node (a2) at (2,0) {\tiny{\ldots}};
				\node[int] (v2) at (4,0) {};
				\node (a3) at (6,0) {\small{$e_i$}};
				\node[ext] (v3) at (8,0) {\tiny{$v_{\lambda}$}};
				\draw (v0) edge (a0);
				\draw (a0) edge[->] (v1) edge (a1);
				\draw (a2) edge[->] (v2) edge (a3);
				\draw (a3) edge[->] (v3);
				\node[int] (u0) at (-4,2) {};
				\node (b0) at (-2,2) {\tiny{$\ldots$}};
				\node (b1) at (2,2) {\tiny{$\ldots$}};
				\node[ext] (u1) at (4,2) {\tiny{$f_i\xi v_{\lambda}$}};
				\node (p0) at (-6,1) {\tiny{$e_i\xi$}};
				\node (p1) at (0,2) {\tiny{$\ldots$}};
				\node (p2) at (0,0) {\tiny{$\ldots$}};
				\draw (v0) edge (p0);
				\draw (p0) edge[->] (u0);
				\draw (u0) edge (b0);
				\draw (b1) edge[->] (u1);
				\draw (v1) edge[->] (p1);
				\draw (p2) edge[->] (u1);
			\end{tikzpicture}
			.		\end{equation}
		We denote by $v_\lambda$ the generator of the one-dimensional module $\Bbbk_{\lambda}$.
		Vertices correspond to the basis vectors of the module $\sD_i(\Bbbk_\lambda)$, and edges are responsible for the action of elements $e_i$ and $e_i\xi$.
		The horizontal grading is the $h_i$-eigenvalues and the vertical one corresponds to the $\xi$-grading.  The complex $\sT_i(\Bbbk_{\lambda})=\cone(\Bbbk_{\lambda} \to \sD_i(\Bbbk_{\lambda}))$  consists of the quotient  module placed in $0$'th homological degree. (We denote by $\times$ the place where the vector $v_\lambda$ is supposed to stay in order to visualize the $\xi$ and $\fh$-gradings):
		\begin{equation}
			\label{eq::T::k}	
			\sT_i(\Bbbk_{\lambda}) =
			\quad 
			\begin{tikzpicture}[scale=0.5]
				\node[ext] (v0) at (-8,0) {\tiny{$f_i^{n} v_{\lambda}$}};
				\node (a0) at (-6,0) {\small{$e_i$}};
				\node[int] (v1) at (-4,0) {};
				\node (a1) at (-2,0) {\tiny{\ldots}};
				\node (a2) at (2,0) {\tiny{\ldots}};
				\node[int] (v2) at (4,0) {};
				\node (v3) at (8,0) {$\times$};
				\draw (v0) edge (a0);
				\draw (a0) edge[->] (v1) edge (a1);
				\draw (a2) edge[->] (v2);
				\node[int] (u0) at (-4,2) {};
				\node (b0) at (-2,2) {\tiny{$\ldots$}};
				\node (b1) at (2,2) {\tiny{$\ldots$}};
				\node[ext] (u1) at (4,2) {\tiny{$f_i\xi v_{\lambda}$}};
				\node (p0) at (-6,1) {\tiny{$e_i\xi$}};
				\node (p1) at (0,2) {\tiny{$\ldots$}};
				\node (p2) at (0,0) {\tiny{$\ldots$}};
				\draw (v0) edge (p0);
				\draw (p0) edge[->] (u0);
				\draw (u0) edge (b0);
				\draw (b1) edge[->] (u1);
				\draw (v1) edge[->] (p1);
				\draw (p2) edge[->] (u1);
			\end{tikzpicture}
			.		\end{equation}
		On the other hand the coinduction functors $\sD_i'(\Bbbk_{\lambda})$
		is concentrated in $1$'st homological degree, whenever  $\langle\lambda,\alpha_i^{\vee}\rangle = n\geq 2$: \[
		\sD_i'(\Bbbk_{\lambda})[1] =
		\quad 
		\begin{tikzpicture}[scale=0.5]
			\node[int] (v0) at (-8,-2) {};
			\node (a0) at (-6,-2) {\small{$e_i$}};
			\node[int] (v1) at (-4,-2) {};
			\node (a1) at (-2,-2) {\tiny{\ldots}};
			\node (a2) at (2,-2) {\tiny{\ldots}};
			\node[int] (v2) at (4,-2) {};
			\node (a3) at (6,-2) {\small{$e_i$}};
			\node[int] (v3) at (8,-2) {};
			\node (v4) at (8,0) {$\times$};
			\draw (v0) edge (a0);
			\draw (a0) edge[->] (v1) edge (a1);
			\draw (a2) edge[->] (v2);
			\draw (v2) edge (a3);
			\draw (a3) edge[->] (v3);
			\node[ext] (u0) at (-4,0) {\tiny{$f_i^{n-1}v_{\lambda}$}};
			\node (b0) at (-2,0) {\tiny{$\ldots$}};
			\node (b1) at (2,0) {\tiny{$\ldots$}};
			\node[int] (u1) at (4,0) {};
			\node (u2) at (8,0) {$\phantom{v_i}$};
			\node (p0) at (-6,-1) {\tiny{$e_i\xi$}};
			\node (p1) at (0,0) {\tiny{$\ldots$}};
			\node (p2) at (0,-2) {\tiny{$\ldots$}};
			\draw (v0) edge (p0);
			\draw (p0) edge[->] (u0);
			\draw (u0) edge (b0);
			\draw (b1) edge[->] (u1);
			\draw (v1) edge[->] (p1);
			\draw (p2) edge[->] (u1);
		\end{tikzpicture}
		.\]
		The complex $\sT_i'$ is also concentrated in one homological degree and consists of the extension of the aforementioned module $\sD_i'(\Bbbk_{\lambda})[1]$ with a vector $v_{\lambda}$:
		\begin{equation}
			\label{eq::pict::T'}
			\sT_i'(\Bbbk_{\lambda})[1] =
			\quad 
			\begin{tikzpicture}[scale=0.5]
				\node[int] (v0) at (-8,-2) {};
				\node (a0) at (-6,-2) {\small{$e_i$}};
				\node[int] (v1) at (-4,-2) {};
				\node (a1) at (-2,-2) {\tiny{\ldots}};
				\node (a2) at (2,-2) {\tiny{\ldots}};
				\node[int] (v2) at (4,-2) {};
				\node (a3) at (6,-2) {\small{$e_i$}};
				\node[int] (v3) at (8,-2) {};
				\draw (v0) edge (a0);
				\draw (a0) edge[->] (v1) edge (a1);
				\draw (a2) edge[->] (v2);
				\draw (v2) edge (a3);
				\draw (a3) edge[->] (v3);
				\node[ext] (u0) at (-4,0) {\tiny{$f_i^{n-1}v_{\lambda}$}};
				\node (b0) at (-2,0) {\tiny{$\ldots$}};
				\node (b1) at (2,0) {\tiny{$\ldots$}};
				\node[int] (u1) at (4,0) {};
				\node[ext] (u2) at (8,0) {\tiny{${v_\lambda}$}};
				\node (p0) at (-6,-1) {\tiny{$e_i\xi$}};
				\node (p1) at (0,0) {\tiny{$\ldots$}};
				\node (p2) at (0,-2) {\tiny{$\ldots$}};
				\node (p3) at (6,-1) {\tiny{$e_i\xi$}};
				\draw (v0) edge (p0);
				\draw (p0) edge[->] (u0);
				\draw (u0) edge (b0);
				\draw (b1) edge[->] (u1);
				\draw (v1) edge[->] (p1);
				\draw (p2) edge[->] (u1);
				\draw (v2) edge (p3);
				\draw (p3) edge[->] (u2);
				\draw (u1) edge[->] (u2);
			\end{tikzpicture}
			.
		\end{equation}
		Note that all modules we described above admit a cyclic vector and we see directly an isomorphism \begin{equation}
			\sv\sT_i'(\Bbbk_{\lambda})[1] \simeq \sT_i(\Bbbk_{\lambda+\alpha_i}) \text{ for } \langle \lambda,\alpha_i^{\vee}\rangle \geq 2.
		\end{equation}
		Let us also provide a pictorial description for $\langle\lambda,\alpha_i^{\vee}\rangle = -n \leq -2$:
		\begin{equation*}
			\sD_i'(\Bbbk_{\lambda})   = 
			\begin{tikzpicture}[scale=0.5]
				\node[ext] (v) at (-12,0) {\tiny{$ v_{\lambda}$}};
				\node[int] (v0) at (-8,0) {};
				\node (a) at (-10,0) {\small{$e_i$}};
				\node (a0) at (-6,0) {\small{$e_i$}};
				\node[int] (v1) at (-4,0) {};
				\node (a1) at (-2,0) {\tiny{\ldots}};
				\node (a2) at (2,0) {\tiny{\ldots}};
				\node[int] (v2) at (4,0) {};
				\node (a3) at (6,0) {\small{$e_i$}};
				\node[ext] (v3) at (8,0) {\tiny{$e_i^n v_{\lambda}$}};
				\draw (v) edge (a);
				\draw (a) edge[->] (v0);
				\draw (v0) edge (a0);
				\draw (a0) edge[->] (v1) edge (a1);
				\draw (a2) edge[->] (v2) edge (a3);
				\draw (a3) edge[->] (v3);
				\node[int] (u0) at (-4,-2) {};
				\node (b0) at (-2,-2) {\tiny{$\ldots$}};
				\node (b1) at (2,-2) {\tiny{$\ldots$}};
				\node[int] (u1) at (4,-2) {};
				\node (p0) at (-2,-1) {\tiny{$e_i\xi$}};
				\node (p1) at (0,0) {\tiny{$\ldots$}};
				\node (p2) at (0,-2) {\tiny{$\ldots$}};
				\node (p3) at (6,-1) {\tiny{$e_i\xi$}};
				\draw (u0) edge (p0);
				\draw (p0) edge[->] (p1);
				\draw (p2) edge[->] (v2);
				\draw (u1) edge (p3);
				\draw (p3) edge[->] (v3);
				\draw (b1) edge[->] (u1);
				\draw (u0) edge (b0);
				\node[int] (u) at (-8,-2) {};
				\node (p) at (-6,-2) {\tiny{$e_i$}};
				\draw (u) edge (p);
				\draw (p) edge[->] (u0);
				\node (p4) at (-6,-1) {\tiny{$e_i\xi$}};
				\draw (u) edge (p4);
				\draw (p4) edge[->] (v1);
			\end{tikzpicture}
			.
		\end{equation*}
	\end{example}

	\begin{cor}
		\label{cor::T_i::char}		
		On the level of characters, $\sT_i$ is acting by the Hecke generator $\hT_i$:
		\begin{equation}
			\label{eq::decategorify::T}
			\forall M^{\udot}\in\bD^{b}(\mathcal{O}(\fb[\xi])) \text{  we have  } \chi(\sT_i(M^{\udot})) = \hT_i(\chi(M^{\udot})) \in \bZ_{t}[\sfP],
		\end{equation}
		where $\hT_i$ acts in polynomial representation by operators~\eqref{eq::T::action::Hecke}.
	\end{cor}
	\begin{proof}
		Since the character computes the Euler characteristic of complexes and the latter is additive it is enough to verify Identity~\eqref{eq::decategorify::T} only for one-dimensional $M=\Bbbk_\lambda$ with zero $\xi$-grading.
		For $\langle\lambda,\alpha_i^{\vee}\rangle\geq 2$ we see from~\eqref{eq::T::k} that 
		\begin{multline*}
			\chi (\sT_i(\Bbbk_\lambda)) =\left( \hX^{s_i(\lambda)} + \hX^{s_i(\lambda)+\alpha_i}+\ldots+\hX^{\lambda-\alpha_i}\right) + 
			(-t)\left(
			\hX^{s_i(\lambda)+\alpha_i}+\ldots+\hX^{\lambda-\alpha_i}
			\right) =  \\
			= t\hX^{s_i(\lambda)} + (1-t)\left(\hX^{s_i(\lambda)} + \hX^{s_i(\lambda)+\alpha_i}+\ldots+\hX^{\lambda-\alpha_i}\right) = 
			t\hX^{s_i(\lambda)}  + (1-t) \frac{\hX^{s_i(\lambda)} -\hX^{\lambda}}{1-\hX^{\alpha_i}} \stackrel{\eqref{eq::T::X::action::basic}}{=} \hT_i \hX^{\lambda}. 
		\end{multline*}
		One can also work out the same computation for $\langle\lambda,\alpha_i^{\vee}\rangle < 2$. However, it is enough to notice that $\hT_i+1$ commutes with multiplication by $s_i$-symmetric functions and $\sD_i$ commutes with tensor product with $\msl_2$-two-dimensional module $L^{\alpha_i}({1})$. What means that whenever we verify Identity~\eqref{eq::decategorify::T} for $M=\Bbbk_{\lambda}$ with $\langle\lambda,\alpha_i^{\vee}\rangle \geq k$ we can use the induction argument:
		\begin{multline*}
			\chi(\sT_i(\Bbbk_{\lambda-\varpi_i}))+\hX^{\lambda-\varpi_i} = 
			\chi(\sD_i(\Bbbk_{\lambda-\varpi_i})) = \chi(\sD_i(L^{\alpha_i}({1})\otimes\Bbbk_{\lambda}) - \chi(\sD_i(\Bbbk_{\lambda-\varpi_i+\al_i})) = 
			\\ =
			\chi(L^{\alpha_i}({1})\otimes\sD_i(\Bbbk_{\lambda})) - (\hT_i+1)(\hX^{\lambda-\varpi_i+\al_i}) = 
			(\hX^{\al_i-\varpi_i}+\hX^{-\varpi_i})(\hT_i+1)(\hX^{\lambda})- (\hT_i+1)(\hX^{\lambda-\varpi_i+\al_i}) =
			\\ =
			(\hT_i+1)(\hX^{\al_i-\varpi_i}+\hX^{-\varpi_i})\hX^{\lambda} - (\hT_i+1)(\hX^{\lambda-\varpi_i+\al_i}) =
			(\hT_i+1)(\hX^{\lambda-\varpi_i}).
		\end{multline*}
		and prove Identity~\eqref{eq::decategorify::T} for $M=\Bbbk_{\lambda-\varpi_i}$.
	\end{proof}	
	
	\subsection{$\Res_i$, $\LInd_i$ are  spherical functors}
	\label{sec::sherical}
	
	In the preceding \S\ref{sec::super::Ind} we showed the existence of the left and right adjoints $\LInd_i$ and $\RCoind_{i}$ to the restriction functor $\Res_i$, collect different properties of these functors in Theorem~\ref{cor::H::Ind}.  
	In this section, we prove the extra properties of the functors $\Res_i$ that categorifies the Hecke relations known for $\hT_i$.
	
	Let us briefly recall the formalism of spherical functors introduced by I.\,Anno. We follow the definitions suggested in~\cite{Kuznetsov_Spherical} and refer to the detailed description to~\cite{Anno}.\footnote{ We will consider cones of functors like $\sD_{i},\sT_{i},\BGG_{{ij}}$, whose existence do not follow from the structure of triangulated categories~\cite[Introduction]{Anno}. Examining the following diagrams
		$$(P_{\tJ}/B) \stackrel{\mathsf{pr}_2}{\longleftarrow} P_{\tJ} \times ( P_{\tJ}/B ) \stackrel{/B}{\longrightarrow} P_{\tJ} \times^B ( P_{\tJ}/B ) \stackrel{\cong}{\longrightarrow} ( P_{\tJ}/B )^2 \stackrel{\mathsf{pr}_2}{\longrightarrow} (P_{\tJ}/B) \hskip 5mm \text{for each} \hskip 3mm |\tJ| \le 2,$$
		that appear in their definitions, we find that they are common except for sheaves (Fourier-Mukai type kernel) on $( P_{\tJ}/B )^2$ (that are objects in a triangulated category). Thus, we can replace cones in the functor category with cones in a triangulated category to avoid enhancement in the discussions below (as in \cite[\S 2.1]{Kuznetsov_Spherical}).}
	\begin{definition}
		A functor $\Phi:\bD_1\rightarrow\bD_2$ between triangulated categories is called spherical if it admits the left adjoint $\Phi^*$ and the right adjoint $\Phi^{!}$ such that the following natural transformations are isomorphisms of functors:
		\begin{equation*}
			\begin{array}{c}
				\begin{tikzcd}
					\Phi^{!}\oplus \Phi^{*} \arrow[rr,"\eta\circ\Phi^!+\Phi^*\circ\eta"] & & \Phi^{*}\circ\Phi\circ\Phi^{!},
				\end{tikzcd} \\
				\begin{tikzcd}
					\Phi^{!}\oplus\Phi^{*} & &
					\arrow[ll,"\Phi^!\circ \varepsilon + \varepsilon\circ \Phi"']
					\Phi^{!}\circ\Phi\circ\Phi^{*},
				\end{tikzcd}
			\end{array}
		\end{equation*}
		where $\eta$ and $\varepsilon$ denote the unit and the counit morphisms associated with adjunctions.
	\end{definition}
	The following necessary and sufficient property of spherical functors is quite important for us:
	\begin{prop}(\cite[Proposition 2.9]{Kuznetsov_Spherical})
		\label{prp::spherical::general}
		If the functor $\Phi:\bD_1\to\bD_2$ is spherical, then the endofunctor $T_{\Phi}:=\cone(\Id\rightarrow\Phi\circ\Phi^*)$ and the endofunctor $T_{\Phi}':=\mathsf{cocone}(\Phi\circ\Phi^!\rightarrow\Id)$ are mutualy inverse autoequivalences of $\bD_1$.
	\end{prop}
	The endofunctor $T_{\Phi}$ is called \emph{twist} and $T_{\Phi}'$ is called \emph{dual twist} (we refer to~\cite{Anno} for the detailed exposition of the formalism of spherical functors). In particular, one can easily show from the definition that if $\Phi$ is a spherical functor, then both $\Phi^*$ and $\Phi^{!}$ are also spherical functors.

	\begin{thm}
		\label{prp::spherical}	
		The derived functor $\Res_i$ is spherical.
		That is, the following two natural transformations of endofunctors are isomorphisms:
		\begin{eqnarray}
			\label{eq::Ind::spherical}
			{
				\begin{tikzcd}
					\RCoind_i \oplus \LInd_i \arrow[rrrrr,"\eta\circ \RCoind_i + \LInd_i\circ\eta"] &&&&&  \LInd_i \circ \Res_i \circ \RCoind_i,
				\end{tikzcd}
			}
			\\
			\label{eq::CoInd::spherical}
			{
				\begin{tikzcd}
					\RCoind_i \oplus \LInd_i &&&&& \arrow[lllll," \RCoind_i\circ\varepsilon + \varepsilon\circ \LInd_i"']  \RCoind_i \circ \Res_i \circ \LInd_i.
				\end{tikzcd}
			}
		\end{eqnarray}
	\end{thm}
	\begin{proof}	
		If $d:=\langle\lambda,\alpha^{\vee}\rangle \gg 0$, then the $\msl_2$-theory implies the following isomorphisms of graded $\fb$-modules:
		\begin{gather*}
			\LInd_i(\Bbbk_{\lambda}) = \LInd_i^{\bar{0}}(Span(v_\lambda, f_i\xi v_{\lambda}) ) = L^{\alpha_i}(d) \oplus L^{\alpha_i}(d-2)\xi,
			\\
			\RCoind_i(\Bbbk_{\lambda}) = \LInd_i(\sv^{-1}\Bbbk_{\lambda}[-1]) = (L^{\alpha_i}(d)\xi^{-1}\oplus L^{\alpha_i}(d-2))[-1],
			\\
			\begin{array}{c}
				\LInd_i \circ \Res_i \circ \RCoind_i (\Bbbk_{\lambda}) \cong
				\LInd_i\circ \Res_i \left((L^{\alpha_i}(d)\xi^{-1}\oplus L^{\alpha_i}(d-2))[-1] \right) 
				\\ \cong
				\LInd_i^{\bar{0}}\left(
				V^{\alpha_i}_{-d,d}\xi^{-1}\oplus V^{\alpha_i}_{-d+2,d-2} \oplus V^{\alpha_i}_{-d-2,d-2} \oplus V^{\alpha_i}_{-d,d-4}\xi  \right)[-1] \\
				\cong
				(L^{\alpha_i}({d})\xi^{-1}\oplus L^{\alpha_i}({d}-2))[-1] \oplus (L^{\alpha_i}({d})\oplus  L^{\alpha_i}({d}-2)\xi )[-1+1]\\
				\cong
				\RCoind_i(\Bbbk_{{d}}) \oplus \LInd_i(\Bbbk_{{d}}),
			\end{array}
		\end{gather*}
		where we denote by $\Bbbk_{\lambda}$ a one-dimensional $\fb$-module with trivial action of $\fn_+$ and $\fh$ acts by weight $\lambda$, and $L^{\alpha_i}(d)$ denotes an $\fp_i$-module whose restriction to $\msl_2^{\alpha_i}$ is an irreducible module whose highest weight is equal to $d$ and $V^{\alpha_i}_{a,b}$ denotes a $\fb$-module whose restriction to $\fb_i$ is isomorphic to the indecomposable module $V_{a,b}$ from~\eqref{eq::V_ab}.

		Both functors $\eta\circ\RCoind_i$ and $\LInd_i\circ\eta$ are different from zero, and the images of $\RCoind_i(\Bbbk_{\lambda})$ and $\LInd_i(\Bbbk_{\lambda})$ contain $L^{\alpha_i} (d) \xi^{-1}$ and $L^{\alpha_i}( d )$ by inspection. In view of Example \ref{ex::sl_2::T_i},  the natural transformation $( \eta\circ\RCoind_i + \LInd_i\circ\eta )$ in  (\ref{eq::Ind::spherical}) induces an isomorphism after the application to $\Bbbk_\la$ (with $\langle\lambda,\alpha^{\vee}\rangle \gg 0$).
		
		As mentioned in Corollary~\ref{cor::sl2::tensor} all functors involved in (\ref{eq::Ind::spherical}) commute with tensoring with $\msl_2^{i}$-integrable modules.
		From the above, we know the equivalence~\eqref{eq::Ind::spherical} holds for $\langle\lambda,\alpha_i\rangle \ge d_0$ for some $d_0 \in \mathbb{Z}$. By tensoring with the standard irreducible two-dimensional $\msl_2$-module and applying the long exact sequences, we deduce the isomorphism between the lefthand side and the righthand side of~\eqref{eq::Ind::spherical} as a $\fb$-modules for $\langle\lambda,\alpha_i\rangle \ge d \in \mathbb{Z}$ provided if we know the isomorphism for $\langle\lambda,\alpha_i\rangle \ge d + 1$. Thus, \eqref{eq::Ind::spherical} yields an isomorphism for every $\la$. Again by applying the long exact sequence coming from the extension of finite-dimenaional $\fb[\xi]$-modules, we conclude that the natural transformation of functors~\eqref{eq::Ind::spherical} yields an isomorphism after applying to every object in $\mathcal O (\fb [\xi] )$. Therefore, the natural transformation~\eqref{eq::Ind::spherical} is an isomorphism of functors. The proof of the isomorphism~\eqref{eq::CoInd::spherical} is completely analogous.
	\end{proof}
	From general formalism of spherical functors described in Proposition~\ref{prp::spherical::general}, we have
	\begin{cor}
		\label{cor::Ind::spherical}	
		The endofunctors $\sT_i$ and  $\sT'_i$ defined by distinguished triangles
		\begin{equation}
			\label{eq::triangle::T}    
			\Id_{\fb} \stackrel{\eta}{\to}  \Res_i\circ\LInd_i \to \sT_i \stackrel{+1}{\to} \Id_{\fb}[1], \quad
			\sT_i' \to \Res_i\circ\RCoind_i \stackrel{\varepsilon}{\to} \Id_{\fb}  \stackrel{+1}{\to} \sT'_i[1]
		\end{equation}	
		are mutually inverse autoequivalences of  the derived category $\bD^b(\cO(\fb[\xi]))$. 
	\end{cor}	
	Together with Isomorphism~\eqref{eq::ind::coind::iso}, we get the following collections of equivalences of derived endofunctors of $\bD^b(\cO(\fb[\xi])))$:
	\begin{equation}
		\label{eq::Hecke::T::T'}
		\sT_{i}' \simeq \cone(\sv^{-1}\circ \Res_i\circ\LInd_{i} \to \Id[-1]) \simeq
		\cone\left(\sv^{-1}\circ \left(\cone(\sT_{i}[-1]\stackrel{\phantom{}^{\phantom{\frac{1}{2}}}}{\to}  \Id)\right) \to \Id[-1]\right) 
	\end{equation}
	which categorifies the quadratic relations:	
	\begin{equation*}
		\hT_{i}^{-1} = t^{-1}(\hT_{i}+1) - 1 \ \Leftrightarrow \ (\hT_i+1)(\hT_i-t) = 0
	\end{equation*}
	known for the generators $\hT_{i}$ in the Hecke algebras.
	
	Since the left and right adjoints to a spherical functor are spherical functors, we have:
	\begin{cor}
		The induction functor $\LInd_i$ is a spherical functor whose right adjoint is $\Res_{i}$ and whose left adjoint is isomorphic to $\Res_i\sv^{-1}[-1]$ (the restriction functor with the shifted $\xi$ and homological gradings). 
		\[
		\begin{tikzcd}
			\bD^{b}(\cO({\fp_{i}}[\xi]))
			\arrow[rr, shift left = 3, "\Res_i\sv^{-1}{[-1]}"]
			\arrow[rr, shift right = 3, "\Res_{i}"', "\perp" near end]
			&&
			\bD^{b}(\cO(\fb[\xi]))
			\arrow[ll, "\LInd_{i}" description,"\perp"' near start]
		\end{tikzcd}.
		\]
		In particular, we have equivalences of endofunctors of $\bD^b(\cO(\fp_i[\xi])))$:
		\begin{equation}
			\label{eq::Res::spherical}
			\begin{tikzcd}
				\Res_i\sv^{-1}[-1] \oplus \Res_i \ar[r,"\eta\circ\Res+\Res\circ\eta"] & 
				\Res_i\sv^{-1}[-1] \circ \LInd_i\circ \Res_i \ar[r,"\Res\circ\varepsilon +\varepsilon\circ\Res"] & 
				\Res_i\sv^{-1}[-1] \oplus \Res_i.
			\end{tikzcd}	
		\end{equation}
	\end{cor} 
	By shifting the Isomorphism~\eqref{eq::Res::spherical}, we get one more isomorphism of triangulated functors that can be verified directly for $\msl_2[\xi]$-modules:
	\begin{equation}
		\label{eq::Ind::sperical}
		\Res_i \oplus \Res_i\sv[1] \stackrel{\simeq}\rightarrow \Res_i \circ \LInd_i\circ\Res_i.
	\end{equation}
	Applying one more $\LInd_i$ from the right to Isomorphism~\eqref{eq::Ind::sperical}, we end up with another categorification of the quadratic relation~\eqref{eq::Hecke::alg} known for generators $\hT_i$ in the Hecke algebra:
	\begin{multline}
		\label{eq::DD=D+D}
		\sD_i \circ \sD_i \equiv (\Res_i\circ\LInd_i)\circ(\Res_i\circ\LInd_i) \simeq 
		\\ \simeq
		(\Res_i\circ\LInd_i)\oplus (\Res_i\circ\LInd_i)\sv[1] \equiv \sD_i \oplus \sD_i\sv[1].
	\end{multline}
	The Demazure endofunctors $\sD_i:=\Res_i\circ\LInd_i$ categorify the element $\hT_i+1$ in the Hecke algebra.
	Hence, the decategorification of Isomorphism~(\ref{eq::DD=D+D}) looks as follows:
	$$
	(\hT_i+1)(\hT_i+1) = (\hT_i+1)+t(\hT_i+1) \ \Leftrightarrow (\hT_i+1)(\hT_i-t) = 0.
	$$

	\subsection{Iterating inductions for sufficiently dominant weights}
	\label{sec::Demazure::modules}
	We finish this section with a description of the family of modules $\bWD_{\lambda}$ which we call super-Demazure modules.
	Each module can be realized as an iterated composition of appropriate induction functors applied to one-dimensional modules. 
	The $\xi$-degree zero parts of these modules coincide with the classical Demazure modules.

	\begin{definition}
		With each weight $\lambda\in \sfP$ we assign a cyclic $\fb[\xi]$-module $\bWD_{\lambda}$ generated by a cyclic vector $v_{\lambda}$ of weight $\lambda$ subject to the integrability relations:
		\begin{equation}
			\label{eq::Demazure::relations}
			\begin{array}{c}
				\forall \alpha\in\Delta_{+} \text{ such that } \langle\alpha^{\vee},\lambda\rangle<0  \quad e_{\alpha}^{-\langle\alpha^{\vee},\lambda\rangle+1} v_{\lambda} = 0, \\
				\forall \alpha\in\Delta_{+} \text{ such that } \langle\alpha^{\vee},\lambda\rangle\geq 0  \quad e_{\alpha} v_{\lambda} = 0, \\
				\forall h\in \fh \ (h\xi)v_{\lambda} = 0.
			\end{array}
		\end{equation}	 	
	\end{definition}
	
	\begin{thm}
		\label{thm::Ind::Demazure}	
		Suppose $\lambda$ is a dominant integral weight and $s_{i_1}\ldots s_{i_k}$ is a reduced decomposition of a given element $\sigma$ of the Weyl group associated with the root system $\Delta$.
		Then we have an isomorphism of $\fb[\xi]$-modules:
		$$(\Res_{\alpha_{i_1}}\circ\Ind_{\alpha_{i_1}})\circ \ldots \circ(\Res_{\alpha_{i_k}} \circ \Ind_{\alpha_{i_k}}) (\Bbbk_{\lambda}) \simeq \bWD_{\sigma\lambda}$$ and, in particular, the iterated induction does not depend on a reduced decomposition of $\sigma$.
	\end{thm}
	\begin{proof}
		The proof is by induction on the length of $\sigma$ and repeats the standard arguments known for the Demazure modules in the classical case.
		The base of induction corresponds to the case $\sigma=1$. For dominant $\lambda$ we have $\langle \lambda, \alpha_i^{\vee} \rangle \geq 0$ for all $ {i}\in\tI$ and, consequently, $\bWD_{\lambda}=\Bbbk_{\lambda}$ since the elements $\{e_i|{i}\in\tI\}\cup \fh[\xi]$  generates the Lie superalgebra $\fb[\xi]$.
		
		For the induction step, we suppose that a simple root ${i}\in\tI$ is chosen in such a way that
		$l(s_i\sigma)=l(\sigma)+1$.
		Note that the Zukerman induction functor $\Ind_i$ is the quotient of the usual induction. Therefore, the $\fp_i[\xi]$-module $\Ind_i(\bWD_{\sigma(\lambda)})$ admits a cyclic vector $v_{\sigma\lambda}$ of weight $\sigma(\lambda)$ (that equals the image of the cyclic vector $v'$ of the $\fb[\xi]$-module $\bWD_{\sigma(\lambda)}$ of weight $\sigma(\lambda)$). However, the condition $l(s_i\sigma)=l(\sigma)+1$ implies that $d:=\langle\sigma(\lambda),\alpha_i^{\vee}\rangle >0$ and the $\msl_2^{\alpha_i}$-submodule generated by $v'$ is of dimension $d+1$.
		Consequently, the following list of relations are satisfied for the action of $\fp_i[\xi]$ on
		$v'$ in the cyclic module $\Ind_i(\bWD_{\sigma(\lambda)})$:
		\begin{eqnarray}
			\label{eq::Demazure::relations::1a}
			e_i v_{\sigma\lambda} = 0,\  f_i^{d+1} v_{\sigma\lambda} = 0, \\
			\label{eq::Demazure::relations::1b}
			\forall \alpha\in\Delta_{+} \text{ such that } \langle\alpha^{\vee},\sigma(\lambda)\rangle<0  \quad e_{\alpha}^{-\langle\alpha^{\vee},\sigma(\lambda)\rangle+1} v_{\sigma\lambda} = 0, \\
			\label{eq::Demazure::relations::1c}
			\forall \alpha\in\Delta_{+}\setminus\{\alpha_i\} \text{ such that } \langle\alpha^{\vee},\sigma(\lambda)\rangle\geq 0  \quad e_\alpha v_{\sigma\lambda} = 0, \\
			\label{eq::Demazure::relations::1d}	
			\forall h\in \fh \ (h\xi)v_{\sigma\lambda} = 0.
		\end{eqnarray}
		Moreover, this is the full list of relations on the cyclic $\fp_i[\xi]$-module $\Ind_i(\bWD_{\sigma(\lambda)})$ since it is a maximal quotient of the module $U(\fp_i[\xi])\otimes_{U(\fb[\xi])}\bWD_{\sigma(\lambda)}\simeq \Bbbk[f_i]\otimes(1,f_i\xi)\otimes\bWD_{\sigma(\lambda)}.$
		The module 	$\Ind_i(\bWD_{\sigma(\lambda)})$  is $\msl_2^{\alpha_i}$-integrable and, in particular, the weight decomposition is stable under the reflection $s_i$. The automorphism $s_i$ acts on the parabolic subalgebra $\fp_i$ and consequently on the $\fp_i[\xi]$-integrable modules. The image under $s_i$ of the cyclic vector $v_{\sigma\lambda}$ is the vector $v_{s_i\sigma\lambda}:=e_i^{d} v_{\sigma\lambda}$ of weight $s_i\sigma(\lambda)$. 
		The relations on the $\fp_i[\xi]$-cyclic vector $v_{s_i\sigma\lambda}$ coincide with the image of $s_i$ of Relations~\eqref{eq::Demazure::relations::1a}-\eqref{eq::Demazure::relations::1d}. Note that the condition $l(s_i\sigma)=l(\sigma)+1$ implies the coincidence of sets 
		\begin{multline*}
			\{\alpha\in\Delta : \langle\alpha^{\vee},\sigma(\lambda)\rangle<0 \} 
			= \Delta_{+}\cap \sigma^{-1}(\Delta_{-}\setminus\{-\alpha_i\}) = 
			\\
			= \Delta_{+}\cap \sigma^{-1}s_i(\Delta_{-}\setminus\{-\alpha_i\}) =
			\{\alpha\in\Delta_{+}\setminus\{\alpha_i\} : \langle\alpha_j^{\vee},s_i\sigma(\lambda)\rangle<0 \},
		\end{multline*} 
		and the complement of these sets in $\Delta_{+}\setminus\{\alpha_i\}$ that index Relations~\eqref{eq::Demazure::relations::1c}.
		The image under $s_i$ of Relations~\eqref{eq::Demazure::relations::1a} interchanges $e_i$ and $f_i$:
		$$
		f_iv_{s_i\sigma\lambda}=0\ \& \ e_i^{d+1}v_{s_i\sigma\lambda} = 0.
		$$
		Therefore, the $\fp_i[\xi]$-module $\Ind_i(\bWD_{\sigma(\lambda)})$ contains a cyclic vector $v_{s_i\sigma\lambda}$ of weight $s_i\sigma\lambda$ and the relations for it are Relations~\eqref{eq::Demazure::relations} and the relation $f_i v_{s_i\sigma\lambda}=0$ implies that
		$$ f_i\xi v_{s_i\sigma\lambda} = -\frac{1}{2}[h\xi,f_i] v_{s_i\sigma\lambda}  =0$$ 
		and, consequently, $v_{s_i\sigma\lambda}$ is a cyclic vector of the $\fb[\xi]$-module $\Res_i\Ind_i(\bWD_{\sigma(\lambda)})$. The comparison of relations for the cyclic vector finishes the comparison of $\Res_i\Ind_i(\bWD_{\sigma(\lambda)})$ and $\bWD_{s_i\sigma(\lambda)}$.
	\end{proof}

	\section{Categorification of AHA and its polynomial representation}
	\label{sec::proof}
	
	\subsection{Statement of a categorification}
	\label{sec::AHA::categorify}
	
	In this section we will explain the meaning of the categorification of the Affine Hecke Algebra (AHA) and its action in the faithful polynomial representation $\bZ_{t}[\sfP]$ via the endofunctors $\sD_i$,$\sT_i$ of the derived category $\bD^{b}(\cO(\fb[\xi]))$ defined in \S\ref{sec::Demazure::all} and outlined in Notation~\ref{not::demazure::twist}.
	
	\begin{thm}
		\label{thm::DAHA}	
		The endofunctors $\{\sT_i |i\in \tI\}$, $\{\sX^{\mu}|\mu\in \sfP\}$, and $\{\sv^m|m\in \bZ \}$,	
		categorify the action of the affine Hecke algebra in the basic representation $\bZ_t[\sfP]$
		under the following correspondence:
		\[ \hT_i \leftrightarrow \sT_i, \ \hX^{\mu} \leftrightarrow \sX^{\mu}, \ t \leftrightarrow \sv[1].
		\]	
		More precisely,	on the level of characters the action of endofunctors $\sT_i$, $\sX^{\mu}$ and $\sv[1]$ on $\bZ_{t}[\sfP]$ coincides with the action of the generators of AHA: $\hT_i$, $\hX^{\mu}$ and $t$ correspondingly. Moreover, there exists the following list of isomorphisms of triangulated endofunctors
		\begin{gather}
			\label{eq::Hecke}
			\begin{array}{c}
				{	\sT_i \circ \sT'_i \simeq \sT_i'\circ \sT_i \simeq \Id_{\fb {[\xi]}}; } \\
				{ \sT'_i \simeq  \cone((\sv\circ\sD_i) \to \Id_{\fb {[\xi]}}[-1]) \simeq \cone((\sv\circ\cone(\sT_i[-1] \to \Id_{\fb {[\xi]}}))\to \Id_{\fb {[\xi]}}[-1])   }
			\end{array}
			\\
			\label{eq::shift1}
			{  \sT_i \circ \sX^{\mu} \simeq \sX^{\mu} \circ \sT_i,} {~{\text if}~ \langle \mu,\alpha_i^{\vee} \rangle=0; }\\
			\label{eq::shift2}
			{ \sT_i \circ \sX^{\mu}  \simeq \sv\sX^{\mu-\alpha_i}\circ\sT'_i[1],}{~{\text if}~ \langle \mu, \alpha_i^{\vee} \rangle=1.}
			\\
			\label{eq::braid} \underbrace{\sT_i \circ \sT_j \circ \ldots }_{m_{{i,j}} \text{ factors}} \simeq \underbrace{\sT_j \circ \sT_i \circ \ldots }_{m_{{i,j}}\text{ factors}} \text{ here } m_{{i,j}} = ord_{W}(s_is_j)
		\end{gather}	
		that categorify the corresponding defining Relations \eqref{eq::Hecke::alg}, \eqref{eq::rel::braid1}, \eqref{eq::rel::braid2}  and \eqref{eq::rel::braid0} of the Affine Hecke algebra.
	\end{thm}

	Part of this theorem was already proved in the preceding section~\S\ref{sec::Demazure::all} and the remaining equivalences of functors will be explained later in this section. More concretely, our verification of Theorem \ref{thm::DAHA} is organized as follows:
	\begin{itemize}
		\item We showed the coincidence of characters in Proposition~\ref{prp::def::decategorify} and in Corollary~\ref{cor::T_i::char}. 
		\item The proof of the \emph{Hecke} quadratic relation~\eqref{eq::Hecke} was shown in~\S\ref{sec::sherical}. (See Isomorphism~\eqref{eq::Hecke::T::T'} followed from Corollary~\ref{cor::Ind::spherical}).	 \item Relations~\eqref{eq::shift1}, \eqref{eq::shift2} are also based on a computation for $\fg=\msl_2$ and are explained in Section~\ref{sec::shift}  (Corollary~\ref{cor::AHA::rel::categorify}).
		\item The  case-by-case proof (except $G_2$) of the  \emph{Braid Relations}~\eqref{eq::braid}.
		is contained in Section~\ref{sec::braid}.
		The strategies in all cases are the same:
		\noindent
		First, we expand all arrows in the iterated composition, representing $\sT_i$ as a cone of a morphism $\Id_{\fb}\to \sD_i$ and get a complex of functors.
		Second, we define a complex of functors (called $\BGG_{{ij}}$) whose components are numbered by the vertices of the Bruhat graph associated with the corresponding rank $2$ subsystem generated by a couple of simple roots $\alpha_i$ and $\alpha_j$. Finally, we show that initial expansion of $\sT_i$'s is quasi isomorphic to $\BGG_{ij}$ and it is not difficult to see that $\BGG_{ij}\simeq \BGG_{ji}$
		what implies Braid relation~\eqref{eq::braid}.
	\end{itemize}
	
	\subsection{General strategy}
	\label{sec::strategy}
	Theorem~\ref{thm::DAHA} consists of several isomorphisms of different endofunctors of the bounded derived category of the abelian category $\cO(\fb[\xi])$.
	The proof of each of these isomorphisms comply with the following scheme:
	
	First, we present a natural transformation $\epsilon:\sF\Rightarrow \sG$ connecting the pair of endofunctors $\sF,\sG\in End(\bD^{b}(\cO(\fb[\xi])))$ that appear in the left hand and the right-hand sides of the stated isomorphism. This is the most conceptual part of each isomorphism and is explained in detail in each particular case.
	Note that all functors under consideration interact with the finite subsystem $\Pi_0\subset \Pi$  of ranks $1$ or $2$. Therefore, it is enough to explain the corresponding equivalence $\varepsilon:\sF\Rightarrow \sG$  for the corresponding rank $\leq 2$ finite-dimensional Lie algebra $\fg_0$.
	
	For the second step, we check that for all sufficiently dominant integral weights $\lambda$ the map $\epsilon_{\Bbbk_{\lambda}}:\sF(\Bbbk_{\lambda}) \rightarrow \sG(\Bbbk_{\lambda})$ is a quasi isomorphism.
	Here $\Bbbk_{\lambda}$ ($\lambda\in \fP$) exhausts the set of isomorphism classes of irreducible one-dimensional $\fb[\xi]$ modules up to grading shifts, and sufficiently dominant means that $\langle\lambda,\alpha_i\rangle \gg 0$ for all $\alpha_i\in\Pi_0$.
	This step is also checked directly in each particular case. The dominance condition implies vanishing of almost all  cohomologies of $\sF(\Bbbk_{\lambda})$ and $\sG(\Bbbk_{\lambda})$.
	
	The remaining third step is the same in all cases and we will not repeat it many times.
	Thanks to Corollary~\ref{cor::sl2::tensor} we notify that both endofunctors $\sF$ and $\sG$ commutes with tensor products with $\fp[\xi]$-finite-dimensional modules $M$ for the parabolic subalgebra $\fp\supset\fb$ associated with the subset $\Pi_0$:
	$$ 
	\sF(\Res(M)\otimes - ) \simeq M\otimes \sF(-)\quad \& \quad \sG(\Res(M)\otimes -) \simeq  M\otimes \sG(-)$$
	The abelian category of finite-dimensional $\fb[\xi]$ modules is generated by the $\Bbbk_{\lambda}$ with $\lambda$ sufficiently dominant together with their tensor products with $\fp[\xi]$-finite-dimensional modules.
	The category $\cO(\fb[\xi])$ is the full subcategory of the abelian category generated by pro-objects constructed from finite-dimensional ones.	
	This implies that $\epsilon:\sF\to \sG$ is an equivalence of triangulated functors.
	
	\subsection{Relations \eqref{eq::shift2}}
	\label{sec::shift}
	The commutativity of $\sX^{\lambda}$ and $\sT_i$ for $\langle\lambda,\alpha_i^{\vee}\rangle =0$ is obvious since they work with noninteracting weights, and hence $\sX^{\lambda}$ commutes with $\sD_i$. Relation~\eqref{eq::shift2} is more involved and requires the map of functors.
	For $\lambda\in\sfP$ such that $\langle\lambda,\alpha_i^{\vee}\rangle=1$, we have a natural transformation $\varphi$ between even induction functors:
	$$\varphi:\Res_{i}^{\bar{0}}\circ\LInd_{i}^{\bar{0}}\circ \sX^{\mu} \rightarrow \sX^{\mu-\alpha_i} \circ \Res_{i}^{\bar{0}}\circ\LInd_{i}^{\bar{0}} 
	$$ 
	thanks to Corollary~\ref{cor::Ind:0::X}.
	
	We define a natural transformation $\bar{\varphi}:=\Res_i^{\bar{1}}\circ\varphi\circ\Ind_{i}^{\bar{1}}:\sD_i\circ\sX^{\mu} \to \sX^{\mu-\alpha_i} \circ \sD_i$ as:
	\begin{multline}
		\bar{\varphi}:\sD_i\circ\sX^{\mu}\simeq 
		\Res_i\circ\Ind_i\circ\sX^{\mu} \simeq 
		\Res_i^{\bar{1}}\circ\Res_{i}^{\bar{0}}\circ\LInd_{i}^{\bar{0}}\circ\LInd_i^{\bar{1}}\circ \sX^{\mu}
		\simeq \\
		\simeq
		\Res_i^{\bar{1}}\circ\Res_{i}^{\bar{0}}\circ(\LInd_{i}^{\bar{0}}\circ \sX^{\mu}) \circ\LInd_i^{\bar{1}} 
		\stackrel{\Res_i^{\bar{1}}\circ\varphi\circ\Ind_{i}^{\bar{1}}}{\longrightarrow} 
		\Res_i^{\bar{1}}\circ(\sX^{\mu-\alpha_i} \circ \Res_{i}^{\bar{0}}\circ\LInd_{i}^{\bar{0}})\circ\LInd_i^{\bar{1}} \simeq \\
		\simeq \sX^{\mu-\alpha_i} \circ \Res_i \circ \LInd_i
		\simeq \sX^{\mu-\alpha_i} \circ \sD_i.
	\end{multline}
	\begin{lem}
		The following natural transformations of triangulated endofunctors of $\bD^{b}(\cO(\fb))$ assemble into a distinguished triangle
		
		\begin{equation}
			\label{eq::DX::triangle}
			\begin{tikzcd}
				\sX^{\mu} \arrow[r,"\eta"] & \Res_i^{\bar{0}}\circ\LInd_i^{\bar{0}}\circ\sX^{\mu} \arrow[r,"\varphi"] &
				\sX^{\mu-\alpha_i}\circ \Res_i^{\bar{0}}\circ\LInd_i^{\bar{0}}
				\arrow[r,"+1"] & \sX^{\mu}[1].
			\end{tikzcd}
		\end{equation}
		The composition with exact functors $\Res_i^{\bar{1}}$ from the left and $\Ind_i^{\bar{1}}$ from the right to the distinguished triangle~\eqref{eq::DX::triangle} leads to the following distinguished triangle of endofunctors of $\bD^{b}(\cO(\fb[\xi]))$:
		\begin{equation}
			\label{eq::DX=XD::triangle}
			\begin{tikzcd}
				\Res_{i}^{\bar{1}}\circ\Ind_{i}^{\bar{1}}\circ\sX^{\mu} 
				\arrow[r,"\eta"] & 
				\sD_i\circ\sX^{\mu} \arrow[r,"\bar\varphi"] & \sX^{\mu-\alpha_i} \circ \sD_i \arrow[r,"+1"] & \Res_{i}^{\bar{1}}\circ\Ind_{i}^{\bar{1}}\circ\sX^{\mu}[1].
			\end{tikzcd}
		\end{equation}
	\end{lem}
	\begin{proof}
		The proof uses our standard strategy. First one verifies that for sufficiently big dominant weight $\lambda$ we do not have derived component of the induction and have a short exact sequence of $\fb$-modules:
		$$
		\begin{tikzcd}
			0\arrow[r] & \Bbbk_{\lambda+\mu} \arrow[r] &  L^{i}(\lambda+\mu) \arrow[r]   & \sX^{\mu-\alpha}(L^{i}(\lambda)) \arrow[r] & 0 
		\end{tikzcd}
		$$
		Here $\Bbbk_{\lambda+\mu}=\sX^{\mu}(\Bbbk_{\lambda})$ and $\Res_{i}^{\bar{0}}(\Ind_i^{\bar{0}}(\Bbbk_{\lambda+\mu}))$ equals the irreducible $\msl_2^{i}$-module $L^{i}(\lambda+\mu)$ with the highest weight $\lambda+\mu$.
		Moreover, all functors commutes with taking tensor product with $\msl_2^{i}$-integrable modules what follows that the triangle~\eqref{eq::DX::triangle} is distinguished. Triangle~\eqref{eq::DX=XD::triangle} is distinguished because functors $\Res_i^{\bar{1}}$ and $\Ind_i^{\bar{1}}$ are exact and commutes with $\sX^{\bullet}$.
	\end{proof}
	\begin{prop}
		\label{prp::dist::triangle}
		There exist a distinguished triangle of endofunctors of $\bD^{b}(\cO(\fb[\xi]))$:
		\begin{equation*}
			\begin{tikzcd}
				\sv\sX^{\mu+\alpha_i} \ar[r] & \sT_i\circ \sX^{\mu} \ar[r] & \sX^{\mu-\alpha_i}\circ\sD_i \arrow[r,"+1"] & \sv\sX^{\mu+\alpha_i}[1]
			\end{tikzcd}. 
		\end{equation*}
	\end{prop}
	\begin{proof}
		Consider the following diagram of natural transformations between endofunctors of the triangulated category $\bD^{b}(\cO(\fb[\xi]))$:
		\begin{equation}
			\label{eq::octahedron}
			\begin{tikzcd}
				& & \sT_i\circ\sX^{\mu} \arrow[ld,"+1"'] \arrow[rrdd,dotted,bend left]
				\arrow[dd,phantom,"\circlearrowleft" near start] & & \\
				& \sX^{\mu} \arrow[rr,"\eta"description] \arrow[rd,"\eta^{\bar{1}}"description] & & \sD_i\circ \sX^{\mu} \arrow[rd,"\bar\varphi"] \arrow[ul] & \\
				\sv\sX^{\mu-\alpha_i} \arrow[ur,"+1"] \arrow[rruu,dotted,bend left] 
				\arrow[rrru,phantom,"\circlearrowright" near start]
				&& \Res_i^{\bar{1}}\circ\Ind_i^{\bar{1}} \circ \sX^{\mu} \arrow[ur,"\eta^{\bar{0}}" description] \arrow[ll] 
				&& \sX^{\mu-\alpha_i}\sD_i \arrow[ll,"+1"] \arrow[llll,"+1"',bend left, dotted]
				\arrow[lllu,phantom,"\circlearrowright" near start]    
			\end{tikzcd}.
		\end{equation}
		The natural transformation $\eta=\eta^{\bar{0}}\circ\eta^{\bar{1}}$ is the counit for adjoint functors which is a composition of even and odd counits. 
		The circled arrow symbols $\circlearrowleft$ underline the distinguished triangles~\eqref{eq::triangle::T}, \eqref{eq::Ind::od::proj}, and~\eqref{eq::DX=XD::triangle}, while $+1$ corresponds to the shifted arrow in a distinguished triangle. Therefore, the {\it octahedral axiom} of triangulated categories implies the existence of the distinguished triangle whose maps are drawn with dotted arrows.
		The latter distinguished triangle is the one we are looking for.
	\end{proof}
	\begin{cor}
		\label{cor::AHA::rel::categorify}
		The endofunctors $\sT_i\circ\sX^{\mu}$ and $\sv\circ\sX^{\mu-\alpha_i}\circ \sT_i'[1]$ are isomorphic.
	\end{cor}
	\begin{proof}
		We have an isomorphism of $2$ of $3$ endofunctors between two distinguished triangles:
		\begin{equation}
			\label{eq::2::of::3}
			\begin{tikzcd}
				\sv\sX^{\mu-\alpha_i} \arrow[r] \arrow[d,equal] & \sT_i \circ \sX^{\mu} \arrow[r]  & \sX^{\mu-\alpha_i}\circ\sD_i \arrow[r,"+1"] \arrow[d,equal,"\eqref{eq::ind::coind::iso}"] & \sv\sX^{\mu-\alpha_i}[1] \arrow[d,equal] \\
				\sv\sX^{\mu-\alpha_i} \arrow[r,"+1"] & \sv\sX^{\mu-\alpha_i}\circ\sT_i'[1] \arrow[r] & \sX^{\mu-\alpha_i} \circ\sD_i'\sv[1] \arrow[r,"\varepsilon"] & \sv\sX^{\mu-\alpha_i}[1] 
			\end{tikzcd}.    
		\end{equation}    
		Where the top distinguished triange was defined in Proposition~\ref{prp::dist::triangle} while the bottom one follows from the definition of the Demazure cotwist $\sT_i'$ (see e.g. Notation~\ref{not::demazure::twist}). As always, all endofunctors in Diagram~\eqref{eq::2::of::3} commute with 
		tensoring with $\msl_2^{i}$-integrable modules. Therefore, in order to check that this diagram is commutative it is enough to verify it for one-dimensional modules $\Bbbk_{\lambda}$ with $\lambda$ sufficiently antidominant. This follows from Example \ref{ex::sl_2::T_i} by inspection.
		The $2$-of-$3$ axiom of triangulated categories implies the isomorphism of endofunctors $\sT_i\sX^{\mu}$ and $\sv\sX^{\mu-\alpha_i} \sT_i'[1]$.
	\end{proof}

	\subsection{Braid relations for $\sT_i$,$\sT_{j}$}
	\label{sec::braid}
	
	Suppose that we have $\tI_0 =\{i,j\} \subset \tI$. We have a parabolic subalgebra $\fp_{\tI_0}$ and the rank two semi-simple Lie subalgebra $\fg_{\tI_0}$ of $\g$, that we denote by $\fp_{ij}$ and $\fg_{ij}$, respectively. Then, we have the restriction functor
	$$\Res_{{ij}} : \mathcal O(\fp_{{ij}}[\xi]) \longrightarrow \mathcal O(\fb[\xi])$$
	from the category $\mathcal O(\fp_{{ij}}[\xi])$ of $(\h,\xi)$-graded $\fp_{ij}$-integrable finitely generated $\fp_{{ij}}[\xi]$-modules, and the induction functor $\Ind_{{ij}} : \mathcal O(\fb[\xi]) \to \mathcal O(\fp_{ij}[\xi])$ that sends $M$ to the maximal $\g_{ij}$-integrable quotient of $U(\fp_{{ij}}[\xi])\otimes_{U(\fb[\xi])}M$.
	
	It is obvious that if  $\langle\alpha_i,\alpha_j\rangle =0$ then $\sD_i$ and $\sD_j$ commute and consequently $\sT_i\sT_j = \sT_j\sT_i$.
	
	\subsubsection{$A_2$ case}
	\label{sec::braid::A2}
	
	Suppose that the rank $2$ root subsystem generated by ${i,j} \in \tI$ is $A_2$.
	In particular, we assume that $\langle\alpha_i,\alpha_j\rangle =-1$ and we denote by $\msl_3^{{ij}}$ ( resp. $\fp_{{ij}}[\xi]$) the corresponding simple (resp. parabolic) Lie subalgebras of $\fg$ (resp. $\fg[\xi]$) associated to the subset $\{{i,j}\} \subset \tI$.
	
	Recall that the derived induction functor $\LInd_i$ as well as the composition $\sD_i=\Res_i\circ\LInd_i$ are the left derived functors associated with the right exact functors $\Ind_i$ and $\Res_i\circ\Ind_i$, respectively.
	First, let us describe a morphism of additive functors which implies a morphism of derived ones.
	\begin{lem}
		\label{lem::Ind_map}	
		There exists a natural equivalence of additive right exact endofunctors of the abelian category $\cO(\fb[\xi])$:  \begin{equation}
			\label{map::inductions}
			\pi_{{ij}}: \Res_i\circ \Ind_i\circ \Res_j\circ  \Ind_j \circ\Res_i\circ  \Ind_i \rightarrow \Res_{{ij}}\circ 	\Ind_{{ij}}
		\end{equation}
	\end{lem}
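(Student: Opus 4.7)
The plan is to construct $\pi_{ij}$ from the unit of the parabolic adjunction and verify it is an isomorphism by the strategy of \S\ref{sec::strategy}, reducing to cyclic one-dimensional modules via Theorem~\ref{thm::Ind::Demazure}. The starting observation is that for any $M \in \cO(\fb[\xi])$, the module $\Res_{ij}\Ind_{ij}(M)$ is by construction $\fp_{ij}[\xi]$-integrable, hence also $\fp_i[\xi]$- and $\fp_j[\xi]$-integrable. By the identity $\Ind_k \circ \Res_k = \Id$ from~\eqref{eq::ind::tensor} applied to objects in $\cO(\fp_k[\xi])$, the endofunctors $\sD_i$ and $\sD_j$ both act as the identity on modules lying in the image of $\Res_{ij}\Ind_{ij}$. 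Applying $\sD_i\sD_j\sD_i$ to the unit morphism $\eta_{ij}(M): M \to \Res_{ij}\Ind_{ij}(M)$ therefore yields a canonical map
\[
\pi_{ij}(M): \sD_i\sD_j\sD_i(M) \longrightarrow \sD_i\sD_j\sD_i(\Res_{ij}\Ind_{ij}(M)) = \Res_{ij}\Ind_{ij}(M),
\]
whose naturality in $M$ is inherited from that of $\eta_{ij}$. Right exactness of the source and target is automatic, since $\Ind_i, \Ind_j, \Ind_{ij}$ are right exact and $\Res_i, \Res_j, \Res_{ij}$ are exact.

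The second step is to check that $\pi_{ij}(\Bbbk_\lambda)$ is an isomorphism for all sufficiently dominant $\lambda$. Here one uses the combinatorial input that in the $A_2$ Weyl group, the product $s_is_js_i$ equals the longest element $w_0$. By Theorem~\ref{thm::Ind::Demazure}, for dominant $\lambda$ one has an isomorphism of cyclic $\fb[\xi]$-modules
\[
\sD_i\sD_j\sD_i(\Bbbk_\lambda) \;\cong\; \bWD_{w_0\lambda}.
\]
On the other hand, $\Res_{ij}\Ind_{ij}(\Bbbk_\lambda)$ is, for dominant $\lambda$, the $\fp_{ij}[\xi]$-integrable hull of the one-dimensional highest-weight module. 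Passing to its unique lowest-weight generator (of weight $w_0\lambda$) and reading off the relations, one recovers precisely the presentation~\eqref{eq::Demazure::relations} of $\bWD_{w_0\lambda}$. Unwinding the construction of $\pi_{ij}$ identifies it with the canonical isomorphism between these two presentations of the same cyclic module.

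The third step is the standard extension of \S\ref{sec::strategy}: by Corollary~\ref{cor::sl2::tensor} applied iteratively, both $\sD_i\sD_j\sD_i$ and $\Res_{ij}\Ind_{ij}$ commute with tensoring by $\fp_{ij}[\xi]$-integrable modules, and objects of the form $\Bbbk_\lambda \otimes V$ with $\lambda$ sufficiently dominant and $V$ a $\fp_{ij}[\xi]$-integrable module generate $\cO(\fb[\xi])$ as a category closed under pro-objects. Naturality then propagates the isomorphism established in the second step to all objects.

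The main technical point of the argument is precisely the identification in the second step: the target $\Res_{ij}\Ind_{ij}(\Bbbk_\lambda)$ is defined by a universal property (maximal $\fp_{ij}$-integrable quotient), whereas the source $\sD_i\sD_j\sD_i(\Bbbk_\lambda)$ is built by three successive integrable quotients, and the fact that these agree rests on Theorem~\ref{thm::Ind::Demazure} together with $s_is_js_i = w_0$. Once this identification of cyclic presentations is in place, the construction of $\pi_{ij}$ and its extension to the whole category are formal.
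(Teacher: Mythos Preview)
Your proof is correct and follows essentially the same approach as the paper. The one cosmetic difference is in how the map $\pi_{ij}$ is constructed: the paper first builds an intermediate natural transformation $\pi_i:\sD_i\to\Res_{ij}\Ind_{ij}$ from the inclusion $\fp_i[\xi]\subset\fp_{ij}[\xi]$ and then post-composes with $\sD_i\sD_j$, collapsing two of the $\sD$'s via~\eqref{eq::res::integrab}, whereas you apply $\sD_i\sD_j\sD_i$ directly to the unit $\eta_{ij}$ and collapse all three at once. Unwinding the paper's $\pi_i$ via adjunction shows it is exactly $\sD_i(\eta_{ij})$ followed by the identification $\sD_i\Res_{ij}\Ind_{ij}\simeq\Res_{ij}\Ind_{ij}$, so the two constructions literally coincide; your phrasing is arguably the cleaner one. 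The verification on $\Bbbk_\lambda$ via Theorem~\ref{thm::Ind::Demazure} and the extension by tensoring with $\fp_{ij}[\xi]$-integrable modules are identical to the paper's.
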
	
	Lemma~\ref{lem::Ind_map} is very well known for even inductions and was first noticed by Demazure. See e.g.~\cite{Joseph}  and~\cite{Arkhipov} where the modern categorical setup of Demazure functors is used. Most of the proofs for the classical (nonsuper) version are based on the geometry of Bott-Samelson varieties. We suggest below the algebraic proof of this statement without referring to geometry.
	\begin{proof}
		Recall that the inductions we are dealing with are the maximal integrable quotients of the appropriate ordinary inductions from subalgebras.
		Consequently, the embedding of parabolic subalgebras $\fp_i[\xi] \subset \fp_{{ij}}[\xi]$ predicts the natural transformation between functors of  corresponding inductions:
		$$
		\pi_i:\Res_i \circ \Ind_i \to \Res_{{ij}} \circ \Ind_{{ij}}.
		$$
		Moreover, for each $\fb[\xi]$-module $M$ the module $\Res_{{ij}} \circ \Ind_{{ij}}(M)$ is $\msl_3^{{ij}}$-integrable and, in particular, $\msl_2^{j}$-integrable. The commutativity with the tensor product stated in Corollary~\ref{cor::sl2::tensor} explains the isomorphism of additive (underived) functors
		\begin{equation}
			\label{eq::res::integrab}
			\Res_j\circ\Ind_j (\Res_{{ij}} \circ \Ind_{{ij}}(M)) =
			(\Res_{{ij}} \circ \Ind_{{ij}}(M)) \otimes \Res_j\circ\Ind_j (\Bbbk_0) =
			\Res_{{ij}} \circ \Ind_{{ij}}(M)
		\end{equation}
		The last equality follows from the observation $\Ind_i(\Bbbk_0)=\Bbbk_0$ that can be either checked by hands or figured out from Theorem~\ref{thm::Ind::Demazure}.
		The same isomorphism also makes sense for $j$ replaced with $i$. Finally, we define the morphism of functors
		$\pi_{{ij}}:= (\Res_i\circ \Ind_i\circ \Res_j\circ  \Ind_j)\circ \pi_i$.
		Together with Isomorphisms~\eqref{eq::res::integrab} we have
		\begin{multline}
			\pi_{{ij}}\colon \
			( \Res_i\circ \Ind_i)\circ (\Res_j\circ  \Ind_j) \circ (\Res_i\circ  \Ind_i) \stackrel{\pi_i}{\longrightarrow}   \\
			( \Res_i\circ \Ind_i)\circ( \Res_j\circ  \Ind_j )\circ(\Res_{{ij}}\circ  \Ind_{{ij}})) \stackrel{\eqref{eq::res::integrab}}{\simeq}  \Res_i\circ \Ind_i\circ(\Res_{{ij}}\circ  \Ind_{{ij}}) \stackrel{\eqref{eq::res::integrab}}{\simeq} \Res_{{ij}}\circ  \Ind_{{ij}}.
		\end{multline}
		Thanks to the adjunction between restriction and induction functors we know that all induction functors under consideration are right exact, restriction functors are exact and a composition of right exact functors is right exact. We conclude that both functors in~\eqref{map::inductions} are right exact.
		
		We explain in Theorem~\ref{thm::Ind::Demazure}  that the superposition of consecutive inductions applied to the irreducible module $\Bbbk_{\lambda}$ leads to the super-Demazure module $\bWD_{s_is_js_i(\lambda)}$ that happens to be the cyclic $\fp_{{ij}}$-integrable module generated by the cyclic vector of weight ${s_is_js_i(\lambda)}$.
		On the other hand we know that $\Ind_{{ij}}(\Bbbk_{\lambda})$ is the universal cyclic $\fp_{{ij}}$-integrable module generated by the cyclic vector of weight $\lambda$.
		The $\fp_{{ij}}$-integrability implies that one can also choose as a generating cyclic vector any other extremal vector in the $\msl_3^{{ij}}$ irreducible representation. In particular the vector of the weight ${s_is_js_i(\lambda)}$.  Hence, $\pi_{{ij}}$ is an isomorphism for irreducible $\Bbbk_{\lambda}$ with $\lambda$ sufficiently dominant. On the other hand, all functors commute with tensor products with $\msl_3^{{ij}}$-integrable modules. Therefore, $\pi_{{ij}}$ is an isomorphism of additive functors.
	\end{proof}	
	
	Corollary~\ref{cor::H::Ind} says  that  $\sD_i$ has nonvanishing cohomology in the $-1$'st and in the $0$'th degrees.
	Consequently, the composition $\sD_i\circ\sD_j\circ\sD_i$ has only nonpositive cohomology and the natural transformation $\pi_{{ij}}$ of $0$'th cohomology described in Lemma~\ref{lem::Ind_map} extends by universal property to a natural transformation of derived functors:
	\[
	\mathbf{L}\pi_{{ij}}:  \sD_i\circ\sD_j\circ\sD_i \longrightarrow \sD_{{ij}}.
	\]
	This is not an equivalence of derived functors and the next paragraph explains the difference (the cone) between these functors.

	Consider the composition of two counit morphisms coming from adjunctions:
	\[\Id_{\fp_i} \stackrel{\eta_i}{\rightarrow} \RCoind_i\circ \Res_i
	\stackrel{\RCoind_i\circ\eta_j\circ \Res_i}{\longrightarrow} \RCoind_i\circ( \Res_j \circ\LInd_j)\circ\Res_i
	\]
	Together with the composition $\Res_i$ from the left and $\LInd_i$ from the right we get the following morphism of endofunctors:
	\[\sD_i = \Res_i\circ\LInd_i \longrightarrow \Res_i\circ\RCoind_i\circ \Res_j\circ\LInd_j\circ\Res_i\circ\LInd_i = \sD'_i\circ\sD_j\circ\sD_i \]
	Finally, thanks to Duality~\eqref{eq::Serre} between induction and coinduction functors we end up with the following morphism of endofunctors:
	\begin{equation}
		\label{eq::counit::braid}
		\sD_i \longrightarrow \sv^{-1}[-1]\sD_i\circ\sD_j\circ\sD_i \qquad \Leftrightarrow
		\qquad \eta:\sD_i\sv[1] \longrightarrow \sD_i\circ\sD_j\circ\sD_i
	\end{equation}
	
	\begin{prop}
		\label{prp::DDD}	
		There is a distinguish triangle of endofunctors of $\bD^{b}(\cO(\fb[\xi]))$:
		\begin{equation}
			\label{eq::triangle::DD}
			\sD_i\sv[1] \rightarrow \sD_i\circ\sD_j\circ\sD_i \stackrel{\mathbf{L}\pi_{{ij}}}{\longrightarrow} \sD_{{ij}}
		\end{equation}	
	\end{prop}
	\begin{proof}
		First, we check the acyclicity of the images of functors on irreducible (one-dimensional) $\fb[\xi]$-modules $\Bbbk_{\lambda}$ for sufficiently ${i,j}$-dominant weight $\lambda\gg 0$.
		We already know the coincidence of $0$'th cohomologies of  $\sD_i(\sD_j(\sD_i(\Bbbk_{\lambda})))$ and $\sD_{{ij}}(\Bbbk_{\lambda})$ with the super-Demazure module $\bWD_{s_is_js_i(\lambda)}$ thanks to Lemma~\ref{lem::Ind_map} and Theorem~\ref{thm::Ind::Demazure}.
		We claim that the direct inspection of weights shows that $\sD_i(\Bbbk_\lambda)$, $\sD_j\sD_i(\Bbbk_{\lambda})$ as well as $\sD_{{ij}}(\Bbbk_{\lambda})$ do not have nonzero cohomology. Thus, the only nonzero cohomology in Triangle~\eqref{eq::triangle::DD} is $-1$'st.
		The coincidence of $\sD^{-1}_i(\bWD_{s_js_i(\lambda)})$ and $\sD^{0}_i(\Bbbk_{\lambda}) = \Ind_i\Bbbk_{\lambda}$ can be either checked directly or verified from the equality of characters for the cohomology of functors. The compatibility with tensor products of the finite-dimensional $\fp_{ij}$-modules forces these isomorphisms to be functorial since the $\fp_{ij}$-action is responsible for extensions. Consequently, Triangle~\eqref{eq::triangle::DD} is distinguished for $\Bbbk_{\lambda}$ with $\lambda\gg 0$.
		
		Finally, we again use the fact that all functors under consideration commute with the tensor multiplication with $\msl_3$-integrable modules (Corollary~\ref{cor::sl2::tensor}).
		It remains to notice that iterated tensor products of $\{\Bbbk_{\lambda}\mid \lambda \gg 0\}$ with two fundamental representations of $\msl_3$ generate the Grothendieck group $K_0(\cO(\fb[\xi]))$.
		Therefore, Triangle~\eqref{eq::triangle::DD} is distinguished.	
	\end{proof}	
	
	Consider the commutative diagram of endofunctors:
	\begin{equation}
		\label{eq::Bruhat:A2}
		\begin{tikzcd}
			& \sD_j  \arrow[rr,"\eta_i"]  \arrow[rrd,"\eta_i" near start] && \sD_i\sD_j  \arrow[rrd,"(\mathbf{L}\pi_{{ij}})\circ\eta_i "description] & & \\
			\Id \arrow[ru,"\eta_j"] \arrow[r,"\eta_i"'] & \sD_i  \arrow[rru,"\eta_j"near start] \arrow[rr,"\eta_j"'] && \sD_j\sD_i  \arrow[rr,"(\mathbf{L}\pi_{{ij}})\circ\eta_i"'] &&
			\sD_{{ij}}
		\end{tikzcd}
	\end{equation}
	All morphisms are unit maps except the rightmost terms where one takes the composition of the unit and the left-derived functor $\pi_{{ij}}:\sD_i\sD_j\sD_i\to \sD_{{ij}}$.
	The commutativity of the diagram follows from the commutativity of unit maps.
	
	We denote by $\BGG_{{ij}}$ the total complex of the corresponding double complex:
	\[ \Id[3] \to (\sD_i\oplus\sD_j)[2] \to (\sD_i\sD_j \oplus \sD_j\sD_i)[1] \to \sD_{{ij}}.
	\]
	
	\begin{prop}\label{braidA2}
		Assume that $\langle \alpha_i, \alpha_j \rangle=-1$. Then there exists a subcomplex $C_{iji}$ and a pair of equivalences $\zeta_i$ and $\xi_i$
		\[ \sT_i\circ \sT_{j}\circ \sT_i \stackrel{\zeta_i}\longleftarrow C_{{iji}} \stackrel{\xi_i}{\longrightarrow} \BGG_{{ij}}.\]
		In particular, we have an isomorphism of triangulated functors $\sT_i\circ \sT_{j}\circ \sT_i$ and $\sT_{j}\circ \sT_i\circ \sT_{j}$ because of  the symmetry  $\BGG_{{ij}}=\BGG_{ji}$.
	\end{prop}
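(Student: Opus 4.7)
The plan is to expand $\sT_i\circ\sT_j\circ\sT_i$ as the total complex of a three-dimensional cube of functors obtained from the three defining triangles $\Id\to\sD_i\to\sT_i\stackrel{+1}{\to}$, $\Id\to\sD_j\to\sT_j\stackrel{+1}{\to}$, $\Id\to\sD_i\to\sT_i\stackrel{+1}{\to}$. The eight vertices of the cube are $\Id$ (in homological degree $3$), the three functors $\sD_i,\sD_j,\sD_i$ (degree $2$), the three compositions $\sD_i\sD_j,\;\sD_i\circ\sD_i,\;\sD_j\sD_i$ (degree $1$), and $\sD_i\sD_j\sD_i$ (degree $0$), with edges given by unit morphisms $\eta_i,\eta_j$ applied in the appropriate slot. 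The subcomplex $C_{iji}$ will be produced from this cube by cancelling certain contractible pairs; the morphism $\zeta_i\colon C_{iji}\to \sT_i\sT_j\sT_i$ is then the obvious inclusion, and proving it is a quasi-isomorphism amounts to exhibiting the cancelled pieces as genuinely contractible.

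The two key simplifications within the cube are the following. First, the spherical relation~\eqref{eq::DD=D+D} coming from Proposition~\ref{prp::spherical} and Corollary~\ref{cor::Ind::spherical} yields $\sD_i\circ\sD_i\simeq\sD_i\oplus\sD_i\sv[1]$, where the summand $\sD_i$ is the image of $\eta_i\colon\sD_i\to\sD_i\sD_i$. Hence one of the two edges landing in $\sD_i\sD_i$ cancels with one of the copies of $\sD_i$ in degree $2$, leaving an unpaired summand $\sD_i\sv[1]$ in degree $1$. Second, Proposition~\ref{prp::DDD} gives the triangle $\sD_i\sv[1]\to \sD_i\sD_j\sD_i\xrightarrow{\mathbf{L}\pi_{ij}}\sD_{ij}$, so the surplus $\sD_i\sv[1]$ just produced can be cancelled against the fiber of $\sD_i\sD_j\sD_i\to\sD_{ij}$, at the cost of replacing the bottom vertex of the cube by $\sD_{ij}$.

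After both cancellations the remaining complex has $\Id$ in degree $3$, $\sD_i\oplus\sD_j$ in degree $2$, $\sD_i\sD_j\oplus\sD_j\sD_i$ in degree $1$, and $\sD_{ij}$ in degree $0$, with differentials given by the unit maps in the top two stages and by a combination of a unit map and $\mathbf{L}\pi_{ij}$ at the bottom; this is visibly the complex $\BGG_{ij}$ assembled from Diagram~\eqref{eq::Bruhat:A2}, yielding the equivalence $\xi_i\colon C_{iji}\stackrel{\simeq}{\to}\BGG_{ij}$. Since $\sD_{ij}=\sD_{ji}$ and the diagram defining $\BGG_{ij}$ is manifestly invariant under exchange of $i$ and $j$, applying the same procedure to $\sT_j\sT_i\sT_j$ produces a complex $C_{jij}$ with an equivalence to $\BGG_{ji}=\BGG_{ij}$, delivering the braid relation.

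The hard part will be step two: arranging the two successive cancellations into a \emph{single} canonical subcomplex $C_{iji}$, so that $\zeta_i$ and $\xi_i$ are actual natural transformations of complexes of functors rather than merely zigzags in the derived category. The subtlety is coherence between the splitting $\sD_i^2\simeq\sD_i\oplus\sD_i\sv[1]$ and the triangle of Proposition~\ref{prp::DDD} across the various faces of the cube; once this coherence is established, the fact that $\zeta_i$ and $\xi_i$ are quasi-isomorphisms reduces, by the strategy of~\S\ref{sec::strategy}, to evaluation on one-dimensional modules $\Bbbk_\lambda$ with $\langle\lambda,\alpha_i\rangle,\langle\lambda,\alpha_j\rangle\gg 0$. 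For such $\lambda$ all higher cohomology vanishes and Theorem~\ref{thm::Ind::Demazure} identifies both sides with the cyclic super-Demazure module $\bWD_{s_is_js_i\lambda}$ equipped with explicit filtrations. The general case then follows by tensoring with $\msl_3^{ij}$-integrable modules using Corollary~\ref{cor::sl2::tensor}.
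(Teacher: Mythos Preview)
Your approach is essentially the same as the paper's: expand the cube, use the splitting $\sD_i\sD_i\simeq\sD_i\oplus\sD_i\sv[1]$ from~\eqref{eq::DD=D+D}, and use the triangle of Proposition~\ref{prp::DDD}. The difference is organizational, and it dissolves exactly the coherence worry you flag in your last paragraph.

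Rather than performing two successive cancellations and then trying to assemble them into a single subcomplex, the paper separates the two steps between $\zeta_i$ and $\xi_i$. The subcomplex $C_{iji}$ incorporates only the \emph{first} simplification: replace the pair $\sD_i\oplus\sD_i$ in degree~$2$ by the diagonal $\widetilde{\sD_i}:=\ker(\sD_i\oplus\sD_i\to\sD_i\sD_i)\cong\sD_i$, and replace $\sD_i\sD_i$ in degree~$1$ by its summand $\sD_i\sv[1]$. The bottom vertex stays $\sD_i\sD_j\sD_i$. Then $\zeta_i$ is literally the inclusion, and its cone is the visibly contractible two-term complex $\sD_i\to\sD_i$; no evaluation on $\Bbbk_\lambda$ is needed. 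The \emph{second} simplification is then the map $\xi_i\colon C_{iji}\to\BGG_{ij}$, which is the identity on degrees $3,2$ and on the $\sD_i\sD_j\oplus\sD_j\sD_i$ part of degree~$1$, sends the extra $\sD_i\sv[1]$ to zero, and sends $\sD_i\sD_j\sD_i\to\sD_{ij}$ via $\mathbf{L}\pi_{ij}$. Its cone is precisely the triangle~\eqref{eq::triangle::DD}, hence acyclic. Again no object-by-object check is required.

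So your plan is right, but you can drop the final paragraph: the coherence issue evaporates once $\zeta_i$ is an inclusion and $\xi_i$ a projection, and the quasi-isomorphism checks become direct cone identifications rather than appeals to~\S\ref{sec::strategy}.
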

	\begin{proof}
		If we rephrase Proposition~\ref{prp::DDD} in terms of spherical functors we end up with sufficient properties of the braid relations formulated in~\cite{Anno}. However, the arguments are not too complicated and we want to have the self-contained proof to compare it with the analogous relations for other rank $2$ root subsystems that are not covered by~\cite{Anno}.
		
		The endofunctor $\sT_i\circ \sT_{j}\circ \sT_i$ is the successive cone that yields a triple complex, given by the commutative cube of units:
		\[
		\begin{tikzcd}[row sep=scriptsize, column sep=scriptsize]
			& \Id \arrow[dl,"\eta_j"description]
			\arrow[rrr,"\eta_i"description] \arrow[ddd,"\eta_i"description] & & & \sD_i  \arrow[dl,"\eta_{j}"description] \arrow[ddd,"\eta_i" description] \\
			\sD_j  \arrow[rrr, crossing over,"\eta_i"description] \arrow[ddd,"\eta_i"description] & & & \sD_j\circ \sD_i \\
			& & & &\\
			& \sD_i  \arrow[dl,"\eta_j"description] \arrow[rrr,"\eta_i"description] & & & \sD_i\circ \sD_i  \arrow[dl,"\eta_j"description] \\
			\sD_i\circ \sD_j  \arrow[rrr,"\eta_i"description] & & & \sD_i\circ \sD_j\circ \sD_i  \arrow[from=uuu, crossing over,"\eta_i"description]
		\end{tikzcd}
		\]
		Denote by $\widetilde{\sD_i}\subset \sD_i\oplus \sD_i$ the kernel of the map $\sD_i\oplus \sD_i\rightarrow \sD_i\circ\sD_i$ which  is clearly isomorphic to $\sD_i$. Denote by $\sD_i\sv[1]$ the direct summand of $\sD_i\circ\sD_i$ defined in~\eqref{eq::DD=D+D}.
		We include these summands into a subcomplex called $C_{{iji}}$:
		\[
		\begin{tikzcd}
			C_{{iji}} \arrow[d,"\sim",hookrightarrow] & := 
			\Id [3] \oplus 
			\left(\begin{array}{c}
				\sD_j\oplus \\
				\widetilde{\sD_i}
			\end{array}\right)[2] 
			\oplus 
			\left(
			\begin{array}{c}
				\sD_i\circ\sD_j\oplus \\
				\sD_j\circ\sD_i \oplus \\
				\sD_i\sv[1]
			\end{array}
			\right)[1] \oplus \sD_i\circ\sD_j\circ\sD_i \\
			\sT_i\circ\sT_j\circ\sT_i & := 
			\Id [3] \oplus 
			\left(\begin{array}{c}
				\sD_j\oplus \\
				{\sD_i}\oplus \\
				\sD_i
			\end{array}\right)[2] 
			\oplus 
			\left(
			\begin{array}{c}
				\sD_i\circ\sD_j\oplus \\
				\sD_j\circ\sD_i \oplus \\
				\sD_i\circ\sD_i[1]
			\end{array}
			\right)[1] \oplus \sD_i\circ\sD_j\circ\sD_i
		\end{tikzcd}
		\]
		We claim, that it is not difficult to show that the subspace $C_{{iji}}$ is indeed a subcomplex. Indeed, notice that the restriction of the counit map $\eta_j$ onto the kernel $\widetilde{\sD_i}$ coincides with $\eta_j$. Moreover, all elements in the direct summand $\sD_i\subset \sD_i\circ\sD_i$ belong to the image of the differential and hence the restriction of the {total} differential on it is zero by the commutativity of the diagram, and the sign convention of the double complex.
		The cone of the morphism $\zeta_i:C_{{iji}}\rightarrow \sT_i\circ\sT_j\circ\sT_i$ is isomorphic to the acyclic complex $\sD_i\rightarrow \sD_i$ and, therefore, $\zeta_i$ is a quasiisomorphism.
		
		We claim that there exists a straightforward map of complexes: $\xi_i: C_{{iji}} \to \BGG_{{ij}}$ which is the componentwise maps of induction functors. The cone of this morphism is isomorphic to the distinguished triangle~\eqref{eq::triangle::DD}.
		Consequently, $\xi_i$ is an isomorphism.
		
		By interchanging $i$ and $j$ we get another equivalences:
		$$\sT_{j}\circ \sT_i\circ \sT_{j}\stackrel{\zeta_j}{\longleftarrow} C_{{jij}} \stackrel{\xi_j}{\longrightarrow} \BGG_{{ji}}=\BGG_{{ij}}.$$
		Therefore, complexes $\sT_{j}\circ \sT_i\circ \sT_{j}$ and $\sT_i\circ \sT_{j}\circ \sT_i$ are isomorphic in derived category.
	\end{proof}	
	
	\subsubsection{Braid relations: $B_2$ case}	
	\label{sec::Braid::B2}
	
	In this section, we omit the superposition sign "$\circ$" to make formulas with functors more compact.
	
	Suppose that the root subsystem generated by ${i,j}\in \tI$ is $B_2$.
	In particular, we assume $\langle\alpha_i,\alpha_j\rangle =-2$.
	The goal of this subsection is to explain the corresponding isomorphism of triangulated functors:
	\begin{equation}
		\label{eq::B2::rel}
		\sT_i\sT_j\sT_i\sT_j \simeq \sT_j\sT_i\sT_j\sT_i.
	\end{equation}
	
	The strategy repeats the one suggested for $A_2$ case:\\
	First, we use Decomposition~\eqref{eq::DD=D+D} and replace $\sD_i\to \sD_i\sD_i$ by the subcomplex $0\to \sD_i\sv[1]$ (that is quasi-isomorphic to the original).
	Second, we use the properties of the spherical functors and make several more cancellations of this kind.
	Third, we use the universal properties of derived functors and relate the remaining complex with the BGG-complex whose zero term is the derived induction on the parabolic subalgebra $\fp_{{ij}}$. Finally, we notice that the BGG complex is invariant under the switch of $\alpha_i$ and $\alpha_j$ that implies the equivalence~\eqref{eq::B2::rel}.

	We start with the description of the key ingredient of the BGG complex the
	functor $\sD_{{ij}} =\Res_{{ij}}\circ\LInd_{{ij}}$ that is the derived superinduction to the parabolic subalgebra $\fp_{{ij}}[\xi]$:
	\begin{lem}
		There exists a distinguished triangle of endofunctors:
		\begin{equation}
			\label{dist-triangle-ZZZ}
			\left(
			\begin{array}{c}
				\sD_i\sD_j\sv[1] \oplus \\
				\sD_i\sD_j\sv[1]
			\end{array}\right)
			\stackrel{\eta\circ \sD_j+\sD_i\circ\eta}\longrightarrow \sD_i\sD_j\sD_i\sD_j
			\stackrel{\mathbf{L}\pi_{{ij}}}{\longrightarrow} \sD_{{ij}}.
		\end{equation}
	\end{lem}
	\begin{proof}
		First, we notice that the $0$'th cohomology of $\sD_{{ij}}$ coincides with $0$'th cohomology of $\sD_i\sD_j\sD_i\sD_j$. We illustrated this fact for $\Bbbk_{\lambda}$ with sufficiently dominant $\lambda$ in Section~\ref{sec::Demazure::modules} and the standard trick with the tensor product with integrable modules explains the coincidence for all $\lambda$. 
		Let us denote the corresponding isomorphism $\pi_{{ij}}$ and the corresponding derived functor  $\mathbf{L}\pi_{{ij}}:\sD_i\sD_j\sD_i\sD_j \rightarrow \sD_{{ij}}$ exists thanks to the universal property of the left derived functor $\sD_{{ij}}$.
		
		Let us compute the cohomology of the image of the triangle~\eqref{dist-triangle-ZZZ} for
		$\Bbbk_{\lambda}$ with sufficiently dominant $\lambda$. 
		As we mentioned the $0$-th cohomology vanishes thanks to Theorem~\ref{thm::Ind::Demazure}:
		$$
		H^{0}(\sD_i\sD_j\sD_i\sD_j(\Bbbk_{\lambda})) = H^{0}(\sD_{{ij}}(\Bbbk_{\lambda})) = \Ind_{{ij}}(\Bbbk_{\lambda}).
		$$
		Moreover,  for $\lambda$ sufficiently dominant $\sD_j(\Bbbk_{\lambda})$ has no higher cohomology and all weights in the $\msl_2^{\beta}$-integrable module $\Ind_j(\Bbbk_{\lambda})$ remain to be $\alpha$-dominant.  Therefore, $\sD_i\sD_j(\Bbbk_{\lambda})$ also does not have nonzero cohomology.
		As in the $A_2$-case the complex $\sD_{{ij}}(\Bbbk_{\lambda})$ also do not have higher cohomology for sufficiently dominant $\lambda$.
		Thus, the cohomology of all complexes in the triangle~\eqref{dist-triangle-ZZZ} differs from zero only for $H^{0}$ and $H^{-1}$.
		The coincidence for $H^{-1}$ follows from the coincidence of characters which was computed by I.Cherednik in~\cite{Ch}.
	\end{proof}	
	
	Next, let us rewrite the complex $\sT_i\sT_j\sT_i\sT_j$ in terms of $\sD$'s:
	\begin{multline*}
		(\Id\stackrel{\eta_i}{\longrightarrow} \sD_i)\circ(\Id\stackrel{\eta_j}{\longrightarrow} \sD_j) \circ
		(\Id\stackrel{\eta_i}{\longrightarrow} \sD_i)\circ(\Id\stackrel{\eta_j}{\longrightarrow} \sD_j) =
		\\
		=
		\Id \rightarrow \left(
		\begin{array}{c}\sD_i\oplus \\ \sD_j\oplus \\ \sD_i\oplus \\ \sD_j
		\end{array}
		\right)
		\rightarrow \left(
		\begin{array}{c}
			\sD_i\sD_j \oplus \\
			\sD_i\sD_i \oplus \\
			\sD_i\sD_j \oplus \\
			\sD_j\sD_i\oplus \\
			\sD_j\sD_j\oplus \\
			\sD_i \sD_j
		\end{array}
		\right)
		\rightarrow
		\left(
		\begin{array}{c}
			\sD_i\sD_j\sD_i \oplus \\
			\sD_i\sD_j\sD_j \oplus \\
			\sD_i\sD_i\sD_j \oplus \\
			\sD_j\sD_i\sD_j
		\end{array}
		\right)
		\rightarrow
		\sD_i\sD_j\sD_i\sD_j.
	\end{multline*}
	
	Consider the subcomplex $C_1$ where in the second column we replace two copies of $\sD_i$ by the kernel of the map $\sD_i\oplus\sD_i \to \sD_i\sD_i$ and the component $\sD_i\sD_i$ is replaced by the direct summand $\sD_i\sv[1]$. The same cancellation associated with the simple root $\beta$ leads to the quasi isomorphic subcomplex $C_2$.
	The next subcomplex $C_3$ corresponds to
	the replacement of the counit morphisms
	$$\sD_i\sD_j\oplus\sD_i\sD_j \oplus \sD_i\sD_j \longrightarrow \sD_i\sD_i\sD_j \oplus\sD_i\sD_j\sD_j$$
	by its kernel and cokernel which is isomorphic to
	$$
	\sD_i\sD_j \stackrel{0}{\longrightarrow} \sD_i\sD_j\sv[1] \oplus \sD_i\sD_j\sv[1]
	$$
	and the underlying space of the quasi isomorphic subcomplex $C_3$  looks as follows:
	$$
	\Id \rightarrow \left(
	\begin{array}{c}\sD_i\oplus \sD_j
	\end{array}
	\right)
	\rightarrow \left(
	\begin{array}{c}
		\sD_i\sv[1] \oplus \sD_j\sv[1] \oplus\\
		\sD_j\sD_i\oplus
		\sD_i \sD_j
	\end{array}
	\right)
	\rightarrow
	\left(
	\begin{array}{c}
		\sD_i\sD_j\sD_i \oplus \\
		\sD_i\sD_j\sv[1]\oplus \\
		\sD_i\sD_j\sv[1]\oplus \\
		\sD_j\sD_i\sD_j
	\end{array}
	\right)
	\rightarrow
	\sD_i\sD_j\sD_i\sD_j.
	$$
	Let us collect certain parts together due to the morphisms described in~\eqref{eq::counit::braid}:
	\begin{multline}
		\label{qis-complex-typeB2}
		\Id \rightarrow \left(
		\begin{array}{c}\sD_i\oplus \sD_j
		\end{array}
		\right)
		\rightarrow \left(
		\begin{array}{c}
			\sD_j\sD_i\oplus
			\sD_i \sD_j
		\end{array}
		\right)
		\rightarrow
		\\
		\rightarrow
		\left(
		\begin{array}{c}
			\cone(\sD_i\sv[1]\longrightarrow\sD_i\sD_j\sD_i) \oplus \\
			\cone(\sD_j\sv[1]\rightarrow \sD_j\sD_i\sD_j)
		\end{array}
		\right)
		\rightarrow
		\cone
		\left(
		\begin{array}{c}
			\sD_i\sD_j\sv[1] \oplus \\
			\sD_i\sD_j\sv[1]
		\end{array}
		\stackrel{\eta\circ \sD_j+\sD_i\circ\eta}\longrightarrow \sD_i\sD_j\sD_i\sD_j\right).
	\end{multline}
	By construction, this object also represents $C_3$ up to quasi-isomorphism.
	By replacing the cone in the rightmost part of \eqref{qis-complex-typeB2} using \eqref{dist-triangle-ZZZ}, we obtain a complex that we call $\BGG_{{ij}}$ with the following presentation:
	\begin{multline}
		\label{eq::BGG:B2}
		\Id \rightarrow \left(
		\begin{array}{c}\sD_i\oplus \sD_j
		\end{array}
		\right)
		\rightarrow \left(
		\begin{array}{c}
			\sD_j\sD_i\oplus
			\sD_i \sD_j
		\end{array}
		\right)
		\rightarrow
		\left(
		\begin{array}{c}
			\cone(\sD_i\sv[1]\longrightarrow\sD_i\sD_j\sD_i) \oplus \\
			\cone(\sD_j\sv[1]\rightarrow \sD_j\sD_i\sD_j)
		\end{array}
		\right)
		\rightarrow
		\sD_{{ij}}.
	\end{multline}
	By construction, $\BGG_{{ij}}$ is quasi-isomorphic to $C_3$. The Complex~\eqref{eq::BGG:B2} is invariant under the switch $\alpha_i\leftrightarrow \alpha_j$ that implies the desired equivalence~\eqref{eq::B2::rel} given by the following collection of quasiisomorphisms:
	$$
	\sT_i\sT_j\sT_i\sT_j\longleftarrow C_1^{i} \longleftarrow C_3^{i} \longrightarrow \BGG_{{ij}} \longleftarrow C_3^{j} \longrightarrow C_1^{j} \longrightarrow \sT_j\sT_i\sT_j\sT_i.
	$$
	
	\subsubsection{$G_2$ case.}
	We believe that the analogous proof can be worked out if the rank $2$ subsystem generated by $\alpha_i$ and $\alpha_j$ is $G_2$. However, the proof seems to be extremely technical and we want to save time and skip it.

	\section{Categorification of the symmetrization operator $\hP_{\tJ}$ via $\LInd_{\tJ}$ }
	\label{sec::symm::categorify}
	
	This section is devoted to the more detailed description of the induction $\Ind_{\tJ}$ and coinduction functors $\Coind_{\tJ}$ associated to an arbitrary subset $\tJ\subset \tI$ of the indexing set of the set of simple roots.
	Recall that they were defined in~\S\ref{sec::Demazure::functor::def} as the left and right adjoints to the restriction functor $\Res_{\tJ}:\cO(\fp_{\tJ}[\xi])\to \cO(\fb[\xi])$. Where $\fp_{\tJ}$ is a parabolic subalgebra associated with $\tJ$. Our main goal is to explain that the corresponding derived functor $\LInd_{\tJ}$ categorifies the Cherednik symmetrization functor $\hP_{\tJ}$.
	
	\subsection{Derived induction $\LInd_{\tJ}$ and coinduction $\RCoind_{\tJ}$ and duality between them}
	\label{sec::Duality}
	One can repeat the arguments of Section~\ref{sec::Ind} to explain the existence of the corresponding derived adjoint functors:
	\begin{prop}
		The category $\cO(\fb[\xi])$ has enough $\Ind_{\tJ}$-acyclic and $\Coind_{\tJ}$-acyclic modules, what shows the existence of the following adjunction:
		\[
		\begin{tikzcd}
			\bD^{b}(\cO({\fb}[\xi]))
			\arrow[rr, shift left = 3, "\LInd_{\tJ}"]
			\arrow[rr, shift right = 3, "\RCoind_{\tJ}"', "\perp" near end]
			&&
			\bD^{b}(\cO(\fp_{\tJ}[\xi]))
			\arrow[ll, "\Res_{\tJ}" description,"\perp"' near start]
		\end{tikzcd}
		\]
		Moreover, each module $M\in\cO(\fb[\xi])$ admits an $\Ind_{\tJ}$-acyclic resolution of length $\dim\fn_{\tJ}$.
	\end{prop}
	\begin{proof}
		Let us sketch the proof without details because it is completely analogous to the one suggested in~\S\ref{sec::Ind}.
		Let $\fb_{\tJ}:=\fh\oplus\fn_{\tJ}^{+}$ and let us denote by  $M_{\lambda}:=U(\fb_{\tJ})\otimes_{U(\fh)}\Bbbk_{\lambda}$ the module which is isomorphic to the Verma module if we flip $\fn^{+}$ and $\fn^{-}$. It is easy to define the action of $\fb[\xi]$ on $M_{\lambda}$ while posing the action of $\xi\fb$ and $\{e_j\colon j\notin \tJ\}$ by zero on the generator.
		This module does not belong to the category $\cO(\fb[\xi])$, however, if we truncate it by elements of sufficiently large degree in the PBW-filtration we will get a finite-dimensional $\Ind_{\tJ}$-acyclic module. 
		Next let us consider the $BGG_{\tJ}$-resolution of $\Bbbk_{0}$ and keep the action of the Borel subalgebra $\fb[\xi]$ (with the trivial action of odd elements):
		\begin{equation}
			\label{eq::BGG::J}
			\begin{tikzcd}
				M_{2\rho_{\tJ}} \ar[r] & \ldots \ar[r] & 
				\oplus_{\omega\in W_{\tJ}\colon l(\omega)=r} M_{-w.0} \ar[r] & \ldots\ar[r] & \oplus_{j\in\tJ} M_{-s_j.0}\ar[r]  & \Bbbk_0
			\end{tikzcd}    
		\end{equation}
		Here $\omega.0$ denotes the dot-action $\omega.\lambda = \omega(\lambda+\rho_{\tJ})-\rho_{\tJ}$. Moreover, $2\rho_{\tJ}=\omega_0^{\tJ}.0$, where $\omega_0^{\tJ}$ is the longest element in the Weyl group $W_{\tJ}$ whose length is equal to $\dim \fn_{\tJ}$. If we tensor the BGG-resolution~\eqref{eq::BGG::J} by a finite-dimensional $\fb[\xi]$-module $K$ we will obtain an $\LInd_{\tJ}$-acyclic resolution of $K$.
	\end{proof}

	Let us split (co)induction $\LInd_{\tJ}$ into the composition of the even $\LInd_{\tJ}^{\bar{0}}$ and the odd $\Ind_{\tJ}^{\bar{1}}$ (co)inductions while considering an intermediate Lie subalgebra $\fb[\xi] \subset \fb \oplus \fp_{\tJ}\xi \subset \fp_{\tJ}[\xi]$.
	\begin{prop}
		\label{prp::Serre}	
		The derived inductions and coinductions are equivalent up to appropriate shifts:
		\begin{gather}
			\label{eq::Serre::odd}
			\Coind_{\tJ}^{\bar{1}}\simeq  \Ind_{\tJ}^{\bar{1}} \circ \sX^{2\rho_{\tJ}} \sv^{-\dim \fn_{\tJ}} \simeq \sX^{2\rho_{{\tJ}}} \sv^{-\dim \fn_{\tJ}}\circ \Ind_{\tJ}^{\bar{1}} \\
			\label{eq::Serre::even}
			\RCoind_{\tJ}^{\bar{0}} \simeq \LInd_{\tJ}^{\bar{0}} \circ \sX^{-2\rho_{\tJ}} [-{\dim \fn_{\tJ}}]
		\end{gather}
		What follows the duality between the superinduction and coinduction:
		\begin{equation}
			\label{eq::Serre}
			\begin{array}{c}
				\RCoind_{\tJ} = \RCoind_{\tJ}^{\bar{0}}\circ \Coind_{\tJ}^{\bar{1}} \simeq 
				\left(  \LInd_{\tJ}^{\bar{0}}\circ \sX^{-2\rho_{\tJ}}[-\dim n_{\tJ}]\right) \circ\left( \sX^{2\rho_{\tJ}}\sv^{-\dim n_{\tJ}} \circ \Ind_{\tJ}^{\bar{1}} \right) \simeq \\
				\simeq \LInd_{\tJ}^{\bar{0}}\circ\Ind_{\tJ}^{\bar{1}} \circ \sv^{-\dim \fn_{\tJ}} [-{\dim \fn_{\tJ}}] \simeq
				\LInd_{\tJ} \circ \sv^{-\dim \fn_{\tJ}} [-{\dim \fn_{\tJ}}]
			\end{array}  
		\end{equation}
	\end{prop}	
	\begin{proof}
		The odd induction $\Ind_{\tJ}^{\bar{1}}$ is an exact functor on the level of abelian categories given by the tensor product with the symmetric algebra $\Lambda^{\udot}(\fn_{-}\xi)$. Respectively, the odd coinduction $\Coind_{\tJ}^{\bar{1}}$ is given by the tensor product with the dual space $\Hom(\Lambda^{\udot}(\fn_{\tJ}^{-}\xi),\Bbbk)$. The Cartan pairing between $\fn_{\tJ}^{-}$ and $\fn_{\tJ}^{+}$ leads the isomorphism
		\[ \Hom(\Lambda^{\udot}(\fn_{\tJ}^{-}),\Bbbk) \simeq \Lambda^{\udot}(\fn_{\tJ}^{+}) \simeq \sX^{2\rho_{{\tJ}}} \Lambda^{\udot}(\fn_{\tJ}^{-}),\]  what implies the Isomorphism~\eqref{eq::Serre::odd}.

		The existence of the even derived (co)induction functors $\LInd_{\tJ}^{\bar{0}}$ (resp. $\RCoind_{\tJ}^{\bar{0}}$) is a straightforward generalization of the arguments considered for the case $\fg=\msl_2$ in~\S\ref{sec::sl_2}.
		The Isomorphism~\eqref{eq::Serre::even} is the Serre duality isomorphism for the flag variety $\mathsf{Fl}:=\bP_{\tJ}/\bB$. $-2\rho_{\tJ}$ is the weight of the top exterior power of $\fn_{\tJ}^{-}$ -- the {co}tangent bundle $\mathcal{T}_{\mathsf{Fl}}$. The dimension $\dim \fn_{\tJ}$ is equal to the dimension of the flag manifold $\mathsf{Fl}$.
		See e.g.\cite{Yantzen} for the detailed proof of the Serre duality in this case and its comparison with derived (co)induction functors.
	\end{proof}

	\subsection{Some properties of $\LInd_{\tJ}$}
	\label{sec::Ind_J::property}
	Let us prove several properties of the induction functor $\LInd_{\tJ}$ that motivates the categorification theorem described in the subsequent subsection~\ref{sec::sym::categorify}.
	
	\begin{lem}
		\label{lem::Ind_I}	
		For all $i\in\tJ$ we have the following isomorphism of derived functors:
		\begin{gather}
			\label{eq::Ind_I:Res}
			\sD_{i} \circ \Res_{\tJ} \simeq \Res_{\tJ} \oplus \Res_{\tJ} \circ \sv[1] ; \\
			\label{eq::Ind_I:Ind}
			\LInd_{\tJ} \circ \sD_{i} \simeq \LInd_{\tJ} \oplus \LInd_{\tJ}\circ \sv[1].
		\end{gather}
	\end{lem}
	\begin{proof}
		Denote by $\Res_{\tJ}^i$ the restriction functor from the parabolic subalgebra  $\fp_{\tJ}[\xi]$ to the minimal parabolic subalgebra $\fp_i[\xi]$ and by $\LInd_{\tJ}^{i}$, $\RCoind_{\tJ}^{i}$ the corresponding derived induction and coinduction functors. The big restriction, (co)induction is the composition of the smaller ones:
		\[\Res_{\tJ}=\Res_i \circ \Res_{\tJ}^i, \quad \LInd_{\tJ} = \LInd_{\tJ}^{i}\circ \LInd_{i}, \quad 
		\RCoind_{\tJ}^{i}\circ\RCoind_{i}.\]
		Consequently, we have
		\[
		\sD_i\circ\Res_{\tJ} = (\Res_{i}\circ\LInd_{i})\circ(\Res_{i}\circ\Res_{\tJ}^i) \stackrel{\eqref{eq::Res::spherical}}{=} (\Res_{i}\oplus\Res_{i}\sv[1])\circ\Res_{\tJ}^{i} = \Res_{\tJ}\oplus\Res_{\tJ}\sv[1].  
		\]	
		and the similar isomorphism for the coinduction functors:	
		\begin{multline*}
			\RCoind_{\tJ}\circ \sD_{i} = \RCoind_{\tJ}^{i} \circ\RCoind_{i} \circ \Res_{i}\circ \LInd_{i} 
			\stackrel{\eqref{eq::CoInd::spherical}}{\simeq}
			\RCoind_{\tJ}^{i}  \circ (\RCoind_{i}\oplus \LInd_{i})  \simeq \\
			\stackrel{\eqref{eq::Serre}}{\simeq}
			\RCoind_{\tJ}^{i} \circ (\RCoind_{i} \oplus \RCoind_{i} \sv[1]) = \RCoind_{\tJ}\oplus\RCoind_{\tJ} \sv[1].
		\end{multline*}
		Thanks to the Serre duality isomorphism~\eqref{eq::Serre} we know that the induction and the coinduction functors differ by an appropriate shift, which implies Isomorphism~\eqref{eq::Ind_I:Ind}.
	\end{proof}	
	
	\begin{prop}
		\label{prp::tens::ind}	
		The (derived) induction $(\mathbf{L})\Ind_{\tJ}$ commutes with the tensor product with finite-dimensional $\fp_{\tJ}$-module. That is for all $M\in\cO(\fp_{\tJ}[\xi])$ we have a functorial in $M$ isomorphisms of functors:
		\begin{equation}
			\label{eq::tens::Ind}	
			(\mathbf{L})\psi_{M}:	(\mathbf{L})\Ind_{\tJ}(\Res_{\tJ}(M)\otimes -) \stackrel{\simeq}{\longrightarrow} M\otimes (\mathbf{L})\Ind_{\tJ}(-).
		\end{equation}	
	\end{prop}
	\begin{proof}
		The proof repeats the one suggested for the induction $\Ind_i$ (see Theorem~\ref{cor::H::Ind}).
		Note that the restriction functor commutes with the tensor product:
		\begin{equation}
			\label{eq::Res::Tensor}
			\Res_{\tJ}(M\otimes -) \simeq \Res_{\tJ}(M)\otimes\Res_{\tJ}(-).
		\end{equation}
		Since $M$ is finite-dimensional its linear dual $\fp_{\tJ}$-module $M^*$ is also a finite-dimensional and belongs to $\cO(\fp_{\tJ}[\xi])$.
		The (derived) left adjoint to the exact functor $\Res_{\tJ}(M\otimes -)$ is equal to $M^{*}\otimes (\mathbf{L})\Ind_{\tJ}(-)$ and the (derived) left adjoint to the exact functor $\Res_{\tJ}(M)\otimes\Res_{\tJ}(-)$ is isomorphic to $(\mathbf{L})\Ind_{\tJ}(\Res_{\tJ}(M^*)\otimes -)$. Replacing $M$ with $M^*$ we see that left and right hand sides of~\eqref{eq::tens::Ind} are left adjoint functors to the isomorphic ones and consequently they are also isomorphic.
	\end{proof}

	\subsection{Particular computation of $\LInd_{\tJ}$}
	In this subsection we will do some particular computations of the even induction $\LInd_{\tJ}^{\bar{0}}$ and the full induction $\LInd_{\tJ}$. (It is always clear how to apply the odd induction since it is an exact functor that does not have the higher derived components.) 
	
	First, recall that the additive functor $\Ind_{\tJ}$ gives the maximal $\fp_{\tJ}$-integrable quotient of the induced module. We call $\lambda \in \sfP$ be $\tJ$-dominant (resp. strictly $\tJ$-dominant) if we have $\langle\lambda,\alpha_j^{\vee}\rangle \geq 0$ (resp. $\langle\lambda,\alpha_j^{\vee}\rangle > 0$) for each $j \in \tJ$. In particular we have:
	$$
	\Ind_{\tJ}(\Bbbk_{\lambda}) = \begin{cases}
		L^{\tJ}(\lambda), \text{ if } \lambda \text{ is $\tJ$-dominant}, \\
		0, \text{ otherwise.}
	\end{cases}
	$$
	Here $L^{\tJ}(\lambda)$ denotes the irreducible finite-dimensional $\fp_{\tJ}$-module with the highest ($\tJ$-dominant) weight $\lambda$.
	In particular, any module whose weight support is $\tJ$-dominant is $\Ind_{\tJ}^{\bar{0}}$-acyclic. 
	Consequently, if $(\lambda-2\rho_{\tJ})$ is a strictly $\tJ$-dominant weight, then we have no higher derived factors:
	$$\LInd_{\tJ}(\Bbbk_{\lambda}) = \LInd_{\tJ}^{\bar{0}}(\Ind_{\tJ}^{\bar{1}}(\Bbbk_{\lambda})) = \LInd_{\tJ}(U(\fn_{\tJ}^{-}\xi)\otimes\Bbbk_{\lambda}) = \Ind_{\tJ}(U(\fn_{\tJ}^{-}\xi)\otimes\Bbbk_{\lambda}).
	$$
	We can easily compute its character:
	\begin{multline}
		\label{eq::PJ::k_lambda}	
		\gch(\LInd_{\tJ}(\Bbbk_{\lambda})) = \gch(\LInd_{\tJ}^{\bar{0}}(U(\fn_{\tJ}^{-}\xi)\otimes\Bbbk_\lambda)) =  
		\\ =
		\sum_{S\subset\Delta_{J}^{+}} (-t)^{|S|} \gch(\LInd_{\tJ}^{\bar{0}}(\Bbbk_{\lambda-\sum_{\alpha\in S}\alpha})) = 
		\sum_{S\subset\Delta_{J}^{+}} (-t)^{|S|} s^{\tJ}_{\lambda-\sum_{\alpha\in S}\alpha},   
	\end{multline}
	where $s_{\mu_+}^{\tJ}$ denotes the Schur functions -- the character of the unique finite-dimensional $\fp_{\tJ}$-irreducible module with the $\tJ$-dominant highest weight $\mu_+$. In particular, we see that all weights $\lambda-\sum_{\alpha\in S}\alpha\geq \lambda-2\rho_{\tJ}$ are (strictly) $\tJ$-dominant whenever $(\lambda-2\rho_{\tJ})$ is (strictly) $\tJ$-dominant.
	Moreover, since the highest weight in an irreducible representation is the greatest element with respect to the Cherednik partial order we conclude that whenever $\lambda-2\rho_{\tJ}$ is $\tJ$-dominant we have:
	$$\gch(\LInd_{\tJ}(\Bbbk_{\lambda})) = \hX^{\lambda} + \sum_{\mu\prec \lambda} c_{\lambda,\mu}\hX^{\mu}, \text{ for some }c_{\lambda,\mu}\in\bZ_{t}.$$
	
	Let us now consider the case $\lambda=0$. 
	\begin{lem}
		\label{lm::Ind::PJ::small}
		\begin{equation}
			\label{eq::Ind::J::small}
			\LInd_{\tJ}^{\bar{0}}(\Lambda^k(\fn^{-}_{\tJ})) = (\Bbbk_{0}[k])^{\# \{w\in W_{\tJ}\colon l(w) =k\}}.
		\end{equation}
	\end{lem}
	\begin{proof}
		By construction, $\LInd_{\tJ}^{\bar{0}}(\Lambda^k(\fn^{-}_{\tJ}))$ restricted to $\g_{\tJ}$ is the same as $\LInd^{\bar{0}}(\Lambda^k( \fn_{-} ))$ for $\tI = \tJ$. Since elements of $\fh$ that centralizes $\fn_{\tJ}^-$ act trivially on $\LInd_{\tJ}^{\bar{0}}(\Lambda^k(\fn_{-}^{\tJ}))$, the $\fh$-weight consideration reduces the assertion to the case of $\tI = \tJ$. Henceforth, we suppress scripts $\tJ$ in what follows. For each $\la \in \sfP_+$, we have
		$$\mathrm{Hom}_{(\g,\fh)} ( \LInd^{\bar{0}}(M), L ( \la ) ) \equiv \mathbf R^{\bullet}\mathrm{Hom}_{(\g,\fh)} ( \LInd^{\bar{0}}(M), L ( \la ) ) \cong \mathbf R^{\bullet}\mathrm{Hom}_{(\fb,\fh)} ( M, \Res \, L ( \la ) ),$$
		where $(\g,\fh)$ denotes the category of $\fh$-semisimple integrable $\g$-modules (that forms a semisimple category) and $(\fb,\fh)$ denotes the category of $\fh$-semisimple $\fb$-modules with locally finite $\fn_{+}$-action. In view of the semisimplicity of $\fh$-action, we can further rewrite this as:
		\begin{equation}\label{eqn:fcompare}
			\mathbf R^{\bullet}\mathrm{Hom}_{(\fb,\fh)} ( M, \Res \, L ( \la ) ) \cong \mathrm{Hom}_{\fh} ( \mathbf R^{\bullet}\mathrm{Hom}_{\fn_+} ( M, L ( \la ) ), \bC ),
		\end{equation}
		where the both of $M$ and $L (\la)$ in the RHS of (\ref{eqn:fcompare}) are restricted to $\fn_{+}$-module equipped with $\fh$-gradings. Now we set $M = \Lambda^k (\fn_{-})$, and replace $L(\la)$ with the dual of the BGG resolution (\ref{eq::BGG::J}) in which each term is injective as $U (\fn_+)$-modules. The $\fh$-weights of the cocyclic vector of indecomposable direct summand (as modules) of such injective resolution is of the shape $w . \la = w ( \la + \rho )- \rho$ for each $w \in W$ counted with multiplicities.
		
		The $\fh$-weights of $\Lambda^k (\fn_{-})$ is inside the convex cone spanned by $w.\rho$ by \cite[\S 5.9]{Kostant}. In view of the estimate \cite[(5.9.2)]{Kostant}, we have $w.\la = v.0$ for some $\la \in \sfP^+$, $w,v \in W$ if and only if $\la = 0$. Since $\mathrm{Stab}_W \rho = \{1\}$, we have necessarily $w = v$ in this case. Therefore, we conclude that
		$$\mathrm{Hom}_{\fh} ( \mathbf R^{\bullet}\mathrm{Hom}_{\fn} ( \Lambda^k (\fn_{-}), L ( \la ) ) \cong \begin{cases} 0 & (\la \neq 0)\\  \Bbbk^{\# \{w \mid l ( w ) = k \}}[k]& (\la = 0)\end{cases}.$$
		Note, that for $\lambda=0$ the $\fh$-homorphisms are concentrated in one homological degree $k$ and thus there is no differential. Thus, we conclude the result.
	\end{proof}
	
	\begin{prop}
		\label{prp::LInd::0}
		$\LInd_{\tJ}(\Bbbk_{0}) = \bigoplus_{w\in W_{\tJ}}\sv^{l(w)} \Bbbk_{0}[l(w)].$
	\end{prop}
	\begin{proof}
		As we mentioned earlier, the odd induction does not have the derived components:
		$$\LInd_{\tJ}^{\bar{1}}\Bbbk_{0} \cong U ( \g_{\tJ}\xi + \fb ) \otimes _{U (\fb [\xi])} \Bbbk_{0} \cong \Lambda^{\bullet} ( \fn_\tJ^-\xi) \cong\bigoplus_{k}\sv^{k}\Lambda^{k}(\fn_{\tJ}^{-}).$$
		Thus, we have
		\begin{multline*}
			\LInd_{\tJ}(\Bbbk_{0}) \cong \LInd_{\tJ}^{\bar{0}} (\LInd_{\tJ}^{\bar{1}}\Bbbk_{0}) \cong \LInd_{\tJ}^{\bar{0}} (\Lambda^{\bullet} ( \fn_\tJ^-\xi) ) \cong  \LInd_{\tJ}^{\bar{0}} (\bigoplus_{k}\sv^{k}\Lambda^{k} ( \fn_\tJ^-) )  \cong 
			\\
			\stackrel{Lemma~\ref{lm::Ind::PJ::small}}{\cong}
			\bigoplus_{k \in \bZ} \left(\sv^{k}L(0)[k]^{\#\{w|l(w)=k\}}\right) = 
			\bigoplus_{w\in W_{\tJ}}\sv^{l(w)}\Bbbk_{0}[l(w)].
		\end{multline*}
		Note that each homology of $\LInd_{\tJ}^{\bar{0}} (\bullet)$ has a structure of $\fg_{\tJ}$-module. In addition, the action of $\fn \xi$ preserves the homological degree, and hence we conclude the isomorphism as $\fb[\xi]$-modules as required.
	\end{proof}
	
	For each $\lambda\in\sfP$, we have assigned a convex subset $\sfP[\preceq\lambda]$ consisting of weights that are less or equal then $\lambda$ with respect to the Cherednik partial order (\S \ref{sec::order}). Let us denote by $\cO(\fb[\xi])_{\preceq\lambda}$ the Serre subcategory of $\cO(\fb[\xi])$ consisting of modules whose weight support belongs to $\sfP[\preceq\lambda]$.
	Respectively, by $\bD^{b}(\cO(\fb[\xi])_{[\preceq\lambda]}$ we denote the full subcategory of bounded complexes of $\fb[\xi]$-modules whose homology belongs to the subcategory $\cO(\fb[\xi])_{\preceq\lambda}$. 
	Recall that for all $\lambda\in\sfP$ the maximal element with respect to the Cherednik partial order $\preceq$ is denoted by
	$\lambda_-^{\tJ}$ and is the unique $\tJ$-antidominant weight in $W_{\tJ}\lambda$. We use the same notations for the Serre subcategory $\cO(\fp[\xi])_{\preceq\lambda_{-}}$ and the corresponding derived full subcategory $\bD^{b}(\cO(\fp_{\tJ}[\xi]))_{[\preceq \lambda_{-}^{\tJ}]}$ for the complexes with their homologies in $\cO(\fp[\xi])_{\preceq\lambda_{-}}$.
	\begin{thm}
		\label{thm::ind::convex}
		For all $\lambda\in\sfP$ we have the following restriction on the image of the derived subcategory
		$$
		\LInd_{\tJ}: \bD^{b}(\cO(\fb[\xi]))_{[\preceq \lambda]} \longrightarrow
		\bD^{b}(\cO(\fp_{\tJ}[\xi]))_{[\preceq \lambda_{-}^{\tJ}]}.
		$$
	\end{thm}
	\begin{proof}
		First, we notice that the (derived) induction $(\mathbf{L})\Ind_{\tJ}$ does not have an effect on the weights that are orthogonal to the subsystem $\Delta^{\tJ}$. 
		Thus, we can assume $\tJ=\tI$ without the loss of generality.
		Second, taking into account the fact that $\{\Bbbk_\lambda\}_{\lambda \in \sfP}$ exhausts the isomorphism classes of simple graded $\fb[\xi]$-modules (up to grading shifts), it suffices to show that the weight support of the cohomology of the complex $\LInd_{\tJ}(\Bbbk_{\lambda})$ ($\lambda\in\sfP$)
		belongs to $\sfP[\preceq{\lambda_{-}^{\tJ}}]$ by successive applications of the long exact sequences of cohomologies.
		
		Let us prove this statement by induction on $\lambda_{-}^{\tJ}$ with respect to the Cherednik partial order.
		The base of induction $\lambda=0$ was verified in Proposition~\ref{prp::LInd::0}. 
		For the induction step we use the properties of $\LInd_{\tJ}$ proved in the preceding subsection~\ref{sec::Ind_J::property}. Indeed, thanks to Isomorphism~\eqref{eq::Ind_I:Ind}, we have quasi-isomorphisms of complexes:
		$$
		\LInd_{\tJ}\sD_{i}(\Bbbk_{\lambda}) \simeq \LInd_{\tJ}(\Bbbk_{\lambda}) \oplus \sv\LInd_{\tJ}(\Bbbk_{\lambda})[1] \hskip 5mm  \Leftrightarrow \hskip 5mm \LInd_{\tJ}\sT_{i}(\Bbbk_{\lambda}) \simeq \sv\LInd_{\tJ}(\Bbbk_{\lambda})[1].
		$$
		Suppose that $\lambda\succ s_i\lambda$, then as we have seen from the direct inspection~\eqref{eq::T::k} the complex $\sT_i(\Bbbk_{\lambda})$ has exactly one vector of weight $s_i\lambda$ and all other $\fh$-eigenvectors belong to the interior of the interval connecting $\lambda$ and $s_i\lambda$. 
		Therefore, $\sT_i(\Bbbk_\lambda)$ is equivalent to $\Bbbk_{s_i\lambda}$ modulo the subcomplex that belongs to $\bD_{[\prec \lambda_{-}^\tJ]}$. Thus, thanks to the induction assumption we see that for any $\mu=\omega_{\mu} \lambda_{+}$ (with $\la_+$ is the $\tJ$-dominant element in $W_\tJ \la$ and $\omega_{\mu}$ of minimal length) the following complexes are equivalent modulo the subcategory $\bD_{[\prec \lambda_{-}^\tJ]}$:
		\begin{equation}
			\LInd_{\tJ}(\Bbbk_{\lambda_+^{\tJ}}) \sim \sv^{l(\omega_{\lambda})}\LInd_{\tJ}(\Bbbk_{\lambda})[l(\omega_{\lambda})].\label{eqn:Ind-equiv}
		\end{equation}
		Suppose, that $\LInd_{\tJ}(\Bbbk_{\lambda_+})$ does not belong to $\bD_{[\preceq{\lambda_{-}^\tJ}]}$. I.e. there exist the cohomology whose weight support does not belong to the subcategory $\cO_{[\preceq{\lambda_{-}^\tJ}]}$. 
		Since the complex $\LInd_{\tJ}(\Bbbk_{\lambda_+})$ is finite-dimensional, there exists the maximal $\xi$-degree $d$ 
		containing the cohomology of the wrong weight.
		Consequently, $\LInd_{\tJ}(\Bbbk_{\lambda})$ will have the maximal $\xi$-degree with wrong cohomology equal to $d+l(\omega_{\lambda})$.
		Let us consider the irreducible $\fp_{\tJ}$-representation $L^{\tJ}(\lambda_{+})$. 
		From one side, $L^{\tJ}(\lambda_{+})$ is equivalent to the extension of one-dimensional modules $\{\Bbbk_{\lambda}\colon \lambda\in W_{\tJ}\lambda_+\}$ modulo the subcategory $\cO_{[\prec\lambda_{-}^\tJ]}$.
		By~\eqref{eqn:Ind-equiv} and the induction hypothesis, for all $\lambda\in W_{\tJ}\lambda_{-}^{\tJ}$ different from $\lambda_{-}^\tJ$ all wrong cohomology of $\LInd_{\tJ}(\Bbbk_\lambda)$ has $\xi$-degree strictly less than $d+l(\omega_{\lambda_{-}^\tJ})$.
		In view of the fact that $\Bbbk_{\la_-}$ is a simple quotient of $L^\tJ( \la_+)$, a repeated application of the long exact sequences of cohomologies yields that $\LInd_{\tJ}(L^{\tJ}(\lambda_{+})) $ will have the same wrong cohomology in the top $\xi$-degree $d+l(\omega_{{\lambda_{-}^{\tJ}}})$ as the complex $\LInd_{\tJ}(\Bbbk_{{\lambda_{-}^{\tJ}}})$ has.
		On the other hand, we know that:
		\begin{multline*}
			\LInd_{\tJ}(\Res_{\tJ}(L^{\tJ}(\lambda_{+}^{\tJ}))) \simeq
			L^{\tJ}(\lambda_{+}^{\tJ})\otimes \LInd_{\tJ}(\Bbbk_{0}) 
			\stackrel{\ref{prp::LInd::0}}{\simeq}
			\\ \simeq 
			L^{\tJ}(\lambda_{+}^{\tJ}) \otimes \left(\oplus_{\omega\in W_{\tJ}} \sv^{l(\omega)}\Bbbk_{0}[l(\omega)] \right) \simeq \oplus_{\omega\in W_{\tJ}} \sv^{l(\omega)}L^{\tJ}(\lambda_+^{\tJ})[l(\omega)] \in\bD_{[\preceq \lambda_{-}]}
		\end{multline*}
		by the $\fp_\tJ$-action on $L^{\tJ}(\lambda_{+}^{\tJ})$ and Proposition~\ref{prp::tens::ind}. The comparison of these two gives rise to a contradiction on weight support of the cohomology of the complex $\LInd_{\tJ}(\Res_{\tJ}(L^{\tJ}(\lambda_{+}^{\tJ})))$, that forces $\LInd_{\tJ}(\Bbbk_{\lambda})\in \bD_{[\preceq\lambda_{-}^{\tJ}]}$ as required.
	\end{proof}

	\subsection{A categorification statement}
	\label{sec::sym::categorify}
	Finally, we are ready to outline all properties of the induction functor $\LInd_{\tJ}$ in Theorem~\ref{thm::PJ::categorify} which suggests a categorification of the Cherednik symmetrization operator $\hP_{\tJ}$ together with the properties~\ref{itm::P:prop2}--\ref{itm::P:prop6} that uniquelly itentify $\hP_{\tJ}$ as recalled in Section~\ref{sec::Sym::AHA}.
	\begin{thm}
		\label{thm::PJ::categorify}
		The endofunctor $\sD_{\tJ}:=\Res_{\tJ}\circ\LInd_{\tJ}$ of the derived category $\sD^{b}(\cO(\fb[\xi]))$ categorifies the Cherednik symmetrization functor $\hP_{\tJ}$. In detail this means, that on the level of characters, $\sD_{\tJ}$ act by $\hP_{\tJ}$ and, moreover, the following properties hold:
		\begin{enumerate}
			\item 
			\label{itm::PJ::categorify::TP=tP}		
			$\forall i\in\tJ$ we have isomorphisms of endofunctors:
			\begin{equation}
				\label{eq::DDJ=DJ+DJ}
				\sD_i\circ\sD_{\tJ} \simeq \sD_{\tJ}\oplus\sD_{\tJ}\sv[1] \simeq \sD_{\tJ}\circ\sD_{i} \quad \Rightarrow \quad
				\sT_i\circ\sD_{\tJ} \simeq \sD_{\tJ}\sv[1] \simeq \sD_{\tJ}\circ\sT_{i};
			\end{equation}
			\item 
			\label{itm::PJ::categorify::tensor}		
			The endofunctor $\sD_{\tJ}$ commutes with taking tensor products with finite-dimensional $\fp_\tJ$-modules:
			$$\sD_{\tJ}(\Res_{\tJ}(M)\otimes - ) \simeq \Res_{\tJ}(M)\otimes\sD_{\tJ}(-);$$
			\item 
			\label{itm::PJ::categorify::convex}  
			For all $\tJ$-dominant weight $\lambda_{+}^{\tJ}$ the endofunctor $\sD_{\tJ}$ preserves the triangulated subcategories $\bD_{[\preceq\lambda_{+}^{\tJ}]}$, moreover $\forall \lambda\in W_{\tJ}\lambda_{+}$ we have:
			$$
			\sD_{\tJ}: \bD^{b}(\cO(\fb[\xi]))_{[\preceq\lambda]} \rightarrow \bD^{b}(\cO(\fb[\xi]))_{[\preceq \lambda_{+}^{\tJ}]}.
			$$
		\end{enumerate}
	\end{thm}
	\begin{proof}
		Let us first explain the properties mentioned in Theorem~\ref{thm::PJ::categorify} and explain why they categorify properties known for the symmetrization operator $\hP_{J}$ mentioned on page~\pageref{itm::P:prop2}.
		We start from item~\eqref{itm::PJ::categorify::TP=tP} that categorifies property~\eqref{itm::P:prop2}.	
		Isomorphisms~\eqref{eq::DDJ=DJ+DJ} of endofunctors follows from appropriate Isomorphisms ~\eqref{eq::Ind_I:Ind} and~\eqref{eq::Ind_I:Res} of functors proved in Lemma~\ref{lem::Ind_I}. For example, we have:
		\begin{gather*}
			\sD_i\circ\sD_{\tJ} =	\sD_i  \circ \Res_{\tJ} \circ \LInd_{\tJ} \stackrel{\eqref{eq::Ind_I:Res}}{\simeq} (\Res_{\tJ} \oplus \Res_{\tJ}\sv[1])\circ \LInd_{\tJ} \simeq \sD_J\oplus\sD_J\sv[1];
			\\
			\sD_{\tJ}\circ\sD_i = \Res_{\tJ} \circ \LInd_{\tJ}\circ \sD_i \stackrel{\eqref{eq::Ind_I:Ind}}{\simeq}
			\Res_{\tJ} \circ(\LInd_{\tJ}\oplus\LInd_{\tJ}\sv[1]) \simeq \sD_J\oplus\sD_J\sv[1].
		\end{gather*}
		Since the characters of $\fp_{\tJ}$-integrable modules are $W_{\tJ}$-symmetric functions the categorical meaning of Property~\eqref{itm::P:prop3} is clear.
		While applying $\Res_{\tJ}$ to Isomorphism~\eqref{eq::tensor::ind} we see that 
		Item~\eqref{itm::PJ::categorify::tensor} follows from Proposition~\ref{prp::tens::ind} and this gives a categorification of Property~\eqref{itm::P:prop5}.
		Property~\eqref{itm::PJ::categorify::convex} categorifies Property~\eqref{itm::P:prop5} of the symmetrization operator and was verified in Theorem~\ref{thm::ind::convex}.
		Proposition~\ref{prp::LInd::0} shows that $\gch(\LInd_{\tJ}(\Bbbk_0))$ is equal to $\hP_{\tJ}(1)$.
		
		Finally, we end up that on the level of characters the endofunctor $\sD_{\tJ}$ satisfy all Properties~\eqref{itm::P:prop2}--\eqref{itm::P:prop::0} and, consequently, the character of $\sD_{\tJ}$ coincides with the symmetrization operator $\hP_{\tJ}$.
	\end{proof}

	\section{Categorification of DAHA and Macdonald polynomials}
	\label{sec::DAHA::Macdonald}

	\subsection{Lie superalgebra $\fI[\xi]$ and the category of graded modules}
	\label{sec::affine::O}
	Let us adopt the setup given in Section~\ref{sec::Setup::finite} to the affine case.
	Namely, we consider the Lie superalgebras $\fI[\xi]\subset \widehat{\fp}_{\tJ}[\xi]\subset \widehat{\fg}[\xi]$ that corresponds to the supercurrents from the Iwahori, parabolic assigned with $\tJ \subset \tI_\af$ and affine Lie algebras correspondingly.
	The untwisted affine Kac-Moody Lie algebra $\widehat{\fg}$ is the central extension of the Lie algebra of currents $\fg[z^{\pm}]$ and, therefore, all algebras we consider are bigraded with respect to the $(z,\xi)$-gradings.
	We set
	$$\bZ [\sfP]_+ := \bZ [\![q,t]\!] \otimes_{\bZ}  \bZ [\sfP]
	\text{ and }\bZ[\sfP] := \bZ [ q^{\pm 1}, t^{\pm 1}] \otimes_{\bZ [q,t]}\bZ [\sfP]_+.$$
	
	Let $\cO(\fI[\xi])$ be the category of  $(z,\xi)$-graded $\fI[\xi]$-modules with semi-simple $\fh$-action whose eigenvalues belongs to $\sfP$. Moreover we require that for each $M\in\cO(\fI[\xi])$ the subspace $M_{a,b}$
	of $z$-degree $a$ and $\xi$-degree $b$ is finite-dimensional  and the formal power sum
	\[\gch (M):=\sum_{a,b} q^a (-t)^b \ch \, M_{a,b} \]
	has to belong to $\bZ [\sfP]$ what means that $M_{a,b}=0$ for $a \ll 0$ or $b \ll 0$.
	
	We denote by $\cO(\widehat{\fp}_{\tJ}[\xi])$ the corresponding category of bigraded $\fh$-semisimple finitely generated modules for any parabolic subalgebra assigned with $\tJ\subset \tI_{\af}$.
	
	For a pair of formal series $f,g \in \bZ [\sfP]$ with respect to $q, t$ and $\hX^{\la}$ ($\la \in \sfP$) with integer coefficients, we say that $g \le f$ if and only if we have the corresponding inequality for all coefficients.
	
	To each weight $\lambda\in \sfP$ we assign a subcategory $\cO_{\prec \la}\subset \cO(\fI[\xi])$ consisting of graded modules whose weight support $\Psi(M)\subset \sfP[\prec\la]$ is bounded from above by $\la$ with respect to the Cherednik ordering $\prec$. Note that we do not take into account the $z$-grading while talking about weight for the representations of Iwahori or affine Lie algebras.
	
	\subsection{Derived category $\bD_{c}^{-}(\cO(\fI[\xi]))$}
	\label{sec::bD_c^-}
	The Macdonald polynomials are not polynomials in $q,t$ and to work out the corresponding categorification we have to change the derived category we are working with.
	
	\begin{definition}
		A bounded from above complex of bigraded $\fh$-semisimple $\fI[\xi]$-integrable modules $M^{\udot}$  is called \emph{convergent} if for each pair $(a,b)\in\bZ\times\bZ$ the $(z,\xi)$-graded component $M^{\udot}_{(a,b)}$ is a bounded complex of finite dimensional $\fb$-modules and for either $a \ll 0$ or $b \ll 0$ the graded component $M^{\udot}_{(a,b)}$ is empty.
		
		The triangulated category $\bD_{c}^{-}(\cO(\fI[\xi]))$  is the derived category associated with the category of convergent complexes. Similarly, we define the triangulated category $\bD_{c}^{-}(\cO(\widehat{\fp}_\tJ [\xi]))$ as the derived category of complex of $\widehat{\fp}_\tJ [\xi]$-modules with integrable $\g_\tJ$-action whose restriction to $\fI[\xi]$ gives rise to a convergent complex.
	\end{definition}
	
	We denote $\bD_{c}^{-}(\cO(\fI[\xi]))$ by $\bD_{c}^{-}$ for simplicity. Let $\bD_c^{\le 0} \subset \bD_{c}^{-}$ be the fullsubcategory consisting of objects presented by a complex concentrated in homological degree $\le 0$.
	Note that each module $M\in\cO(\fI[\xi])$ is convergent but might be infinite-dimensional. However, each complex $M^{\udot}\in \bD_{c}^{-}(\cO(\fI[\xi]))$ admits the well-defined Euler characteristics $\gch(M^{\udot})$ of graded supercharacters which belongs to $\bZ[\sfP]$. 
	
	We denote by $\sq$ and $\sv$ the endofunctors of $\cO$ (and various categories constructed from $\cO$) that raise the $z$-degree and $\xi$-degree by one, respectively.

	\begin{prop}
		The derived functors:  induction $\LInd_{i}$, restriction $\Res_{i}$ and coinduction $\RCoind_{i}$  are well-defined adjoint functors between the convergent category of complexes:
		\[
		\begin{tikzcd}
			\bD_{c}^{-}(\cO({\fI}[\xi]))
			\arrow[rr, shift left = 3, "\LInd_{i}"]
			\arrow[rr, shift right = 3, "\RCoind_{i}"']
			&&
			\bD_{c}^{-}(\cO(\widehat{\fp}_{i}[\xi])).
			\arrow[ll, "\Res_{i}" description]
		\end{tikzcd}
		\]
	\end{prop}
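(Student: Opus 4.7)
The plan is to verify the well-definedness in three steps: $(i)$ that $\Res_i$ sends convergent complexes to convergent complexes; $(ii)$ that $\cO(\fI[\xi])$ admits enough $\LInd_i$- and $\RCoind_i$-acyclic objects inside the convergent category; $(iii)$ that the termwise Frobenius adjunction upgrades to the derived level. Step $(i)$ is immediate since $\Res_i$ does not change the underlying $(\fh, z, \xi)$-graded vector space of a module, so finite-dimensionality of each bigraded piece and the one-sided lower bounds in $(a, b)$ are automatically preserved. Step $(iii)$ is formal once $(ii)$ is in place, via the universal property of the left and right derived functors combined with termwise Frobenius reciprocity.

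For step $(ii)$ I would adapt Lemma \ref{lem::D::acyclic} to the affine setting. For each $i \in \tI_\af$ the root subalgebra of $\fg[z^{\pm 1}]$ generated by $e_{\al_i}$, $\al_i^{\vee}$, $e_{-\al_i}$ is isomorphic to $\msl_2$, and the auxiliary finite-dimensional modules $L_i^r$ together with the short exact sequence \eqref{eq::sl2::resol} carry over by the same cyclic construction, with all generators of $\fI[\xi]$ outside the $\msl_2^{\al_i}$-triple acting by zero. For any $M \in \cO(\fI[\xi])$, the tensor product of $M$ with \eqref{eq::sl2::resol} yields a two-step $\LInd_i$-acyclic resolution as soon as $r$ exceeds the maximal absolute value of $\al_i$-weights on each single bigraded piece of $M$; the analogous construction for $\RCoind_i$ is dual.

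The main obstacle is to check that $\Ind_0(M)$, the maximal $\fp_0$-integrable quotient of the naive induction from $\fI[\xi]$ to $\fp_0[\xi]$, actually lies in $\cO(\fp_0[\xi])$. The new feature relative to the finite case of \S\ref{sec::Ind} is that $e_{-\al_0} = e_\vartheta \otimes z^{-1}$ carries negative $z$-degree, so the underlying induced module could a priori extend indefinitely in the $-z$ direction. Imposing $\fp_0$-integrability forces each cyclic $\msl_2^{\al_0}$-submodule of $\Ind_0(M)$ to be finite-dimensional, which bounds, in terms of the weight $\mu$, the powers of $e_\vartheta z^{-1}$ that can act nontrivially on a weight-$\mu$ vector, and thereby keeps every bigraded component of $\Ind_0(M)$ finite-dimensional. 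The preservation of the lower bound in $z$-degree then follows because any weight-$\mu$ vector in $\Ind_0(M)_{(a,b)}$ originates from weight-$(\mu + k\vartheta)$ vectors of $M_{(a+k,b)}$ with $k$ bounded by the length of the $\msl_2^{\al_0}$-string through $\mu$, and convergence of $M$ controls the supply of such vectors. Once this is established at the level of a single module, the passage to convergent complexes is routine: termwise replacement by the length-two resolution produces a bicomplex whose totalisation is one-sided bounded and convergent bigraded-piece by bigraded-piece.
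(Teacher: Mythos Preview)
Your approach is the same as the paper's: both prove the proposition by invoking a direct generalisation of Lemma~\ref{lem::D::acyclic} to produce length-$2$ acyclic resolutions for every $M\in\cO(\fI[\xi])$. The paper's own proof is a single sentence to that effect, so your three-step outline and your separate treatment of the $i=0$ case---checking that $\fp_0$-integrability keeps $\Ind_0(M)$ inside $\cO$ despite $e_{-\al_0}=e_\vartheta z^{-1}$ carrying negative $z$-degree---supply detail the paper leaves implicit.
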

	\begin{proof}
		There exists a straightforward generalization of the arguments of Section~\ref{sec::Ind} that shows that each module $M\in\cO(\fI[\xi])$ admits a resolution of length at most $2$ by $\Ind_{i}$-acyclic (resp. $\Coind_{i}$-acyclic) modules from $\cO(\fI[\xi])$. 	
	\end{proof}
	Note that the functor $\Res_{i}$ commutes with limits and colimits.  Its right adjoint $\RCoind_{i}$ commutes with colimits, respectively the left adjoint $\LInd_{i}$ commutes with limits, whenever they exist.
	\begin{thm}
		\label{thm::DAHA::categorification}	
		The category  $\bD_c^{-}(\cO(\fI[\xi]))$ together with endofunctors  $\{\sT_{i}|i\in\tI_{\af}\}$, $\sX^{\lambda}$, the $\xi$-grading shift functor $\sv$, the $z$-grading shift functor $\sq$ and the group of diagram automorphism $\Omega:= \sfP/ \sfQ$  categorify the action of  the Double Affine Hecke Algebra $\DAHA$ on the polynomial representation in the sense of Theorem~\ref{thm::DAHA}.
	\end{thm}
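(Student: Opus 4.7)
The plan is to reduce the affine statement to the finite one by showing that every ingredient of Theorem~\ref{thm::DAHA} is compatible with the passage from $\bD^b(\cO(\fb_+[\xi]))$ to $\bD_c^-(\cO(\fI[\xi]))$, and then to incorporate the diagram automorphism group $\Omega$ separately. First, for each $i\in\tI_\af$ the functors $\Res_i$, $\LInd_i$ and $\RCoind_i$ of the previous proposition define endofunctors $\sD_i,\sD_i'$ and distinguished triangles $\Id\to\sD_i\to\sT_i\to\Id[1]$ and $\sT_i'\to\sD_i'\to\Id\to\sT_i'[1]$ in $\bD_c^-(\cO(\fI[\xi]))$. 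I would first check that these constructions preserve convergence: since $\LInd_i$ has cohomological amplitude $[-1,0]$ and $\RCoind_i$ has amplitude $[0,1]$, applying them termwise to a convergent complex and using the bound on $(z,\xi)$-components produces a convergent complex, and the same holds for finite compositions appearing in braid relations.

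Next, I would replay each step of the proof of Theorem~\ref{thm::DAHA} affinely. The shift relations \eqref{eq::shift1}, \eqref{eq::shift2} are local in the simple root $\alpha_i$ and reduce, via the natural transformation $\nu$ coming from the unit and the short exact sequence \eqref{eq::Ind_odd::h-grad}, to a computation for the $\msl_2$-triple attached to $\alpha_i$; this computation is identical for $i\in\tI$ and $i=0$. The Hecke quadratic relation \eqref{eq::Hecke} follows from the spherical functor formalism applied to the adjunction $(\LInd_i,\Res_i,\RCoind_i)$, using the Serre-duality isomorphism $\RCoind_i\simeq\LInd_i\circ\sv^{-1}[-1]$ from Proposition~\ref{prp::Serre}, whose proof only uses the one-dimensional odd piece $f_i\xi$ and the one-dimensional even piece $f_i$ and is therefore insensitive to whether $i$ is affine or finite. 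The braid relations \eqref{eq::braid} reduce, simple root by simple root, to a rank-two subsystem of $\Delta_\af$; since we excluded $\mathsf{G}_2$ components of $\fg$, all rank-two affine subsystems that arise are of type $A_1\times A_1$, $A_2$ or $B_2=C_2$, for which the $\BGG_{ij}$ construction of \S\ref{sec::braid::A2} and \S\ref{sec::Braid::B2} applies verbatim, the only novelty being that the $\Bbbk_\lambda$'s must be taken sufficiently dominant in the affine sense (i.e.\ with $\langle\alpha_i^\vee,\lambda\rangle\gg 0$ for the two roots involved), which is harmless since the tensor-product trick of \S\ref{sec::strategy} still generates $\bD_c^-(\cO(\fI[\xi]))$ after passing to the appropriate Serre subcategory.

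It remains to incorporate the $z$-grading shift $\sq$ and the group $\Omega$ of diagram automorphisms. The functor $\sq$ is central: it is an equivalence that commutes with $\Res_i$, $\LInd_i$ and $\RCoind_i$ because $z$ lies in the center of the relevant universal enveloping algebras and the (co)induction does not involve any $z$-adic completion. Each $\pi\in\Omega$ acts on $\fI[\xi]$ via its permutation of affine simple roots (realized inside $\widetilde{W}_\af$), and therefore defines an autoequivalence of $\cO(\fI[\xi])$ which commutes with $\sq,\sv$, conjugates $\sT_i$ into $\sT_{\pi(i)}$ and sends $\sX^\lambda$ to $\sX^{\pi(\lambda)}$ with the appropriate $\sq$-twist dictated by the level-one action of $\widetilde{W}_\af$ on $\sfP$ recalled in \S\ref{sec::Affine::Lie}; this matches exactly the DAHA relations for the $\Omega$-part. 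Finally, to identify the resulting categorical action with the polynomial representation of $\DAHA$, I would compute the Euler characteristic $\gch$ on the basis $\{\Bbbk_\lambda\}_{\lambda\in\sfP}$, for which the Demazure-type computation in \S\ref{sec::Demazure::modules} together with Example~\ref{ex::sl_2::T_i} reproduces the normalized operators $t s_i+(s_i-1)(t-1)/(\hX^{\alpha_i}-1)$ on $\bZ_t[\sfP]$.

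The main obstacle, I expect, will be the bookkeeping around convergence: one must check that the quasi-isomorphisms $C_{iji}\simeq\sT_i\sT_j\sT_i$ and $C_{iji}\simeq\BGG_{ij}$ (and the $B_2$ analogues) can be constructed inside $\bD_c^-$ rather than just inside $\bD^b$ of a finite-dimensional truncation, and that the tensor-with-integrable trick extends to $\msl_3^{ij}$ or $\msl_2\oplus\msl_2$ or $\mathsf{sp}_4$-integrable objects that appear at level zero for the affine node. This is handled by the observation that the objects $\sX^\lambda(\underline{\Bbbk})\otimes V$ for $V$ finite-dimensional integrable and $\lambda$ ranging over a cofinal chain of $\tI_\af$-dominant weights generate $\bD_c^-(\cO(\fI[\xi]))$ as a triangulated category closed under the relevant Postnikov limits, so that pointwise isomorphisms of functors on such generators propagate, by the same argument as in \S\ref{sec::strategy}, to genuine isomorphisms of triangulated endofunctors of $\bD_c^-(\cO(\fI[\xi]))$.
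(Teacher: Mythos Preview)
Your proposal is correct and follows essentially the same approach as the paper: reduce the affine statement to the finite one (Theorem~\ref{thm::DAHA}) by observing that all arguments were local in rank~$\le 2$ subsystems and that the functors have bounded cohomological amplitude. The paper's own proof is a one-liner: since $\sD_i$, $\sD_i'$, $\sT_i$, $\sT_i'$ have at most two nonzero cohomological components and commute with direct sums and products, the relations hold on $\bD_c^-$ once they hold on bounded complexes, where Theorem~\ref{thm::DAHA} already applies. Your version spells out considerably more than the paper does---the convergence checks, the treatment of $\sq$ and of $\Omega$, and the verification that affine rank-two subsystems stay within the $A_1\times A_1$, $A_2$, $B_2$ types---none of which the paper makes explicit, but all of which are implicit in its appeal to ``the proof of Theorem~\ref{thm::DAHA}''.
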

	\begin{proof}
		The endofunctors $\sD_{i}$, $\sD_{i}'$ as well as the endofunctors $\sT_{i}$, $\sT_{i}'$ has at most two nonzero cohomological components and since they commute with direct sums/products it is enough to prove all the relations for the bounded complexes what was already done in the proof of Theorem~\ref{thm::DAHA}.
	\end{proof}

	Let $\cO [\prec \lambda]$ and $\cO [\preceq \lambda]$ denote the categories of graded $\fI[\xi]$-modules $M$ such that the weight support $\Psi ( M ) \subset \sfP [\prec \lambda]$ and $\Psi ( M ) \subset \sfP [\preceq \lambda]$ correspondingly.
	
	We denote by $\bD_{c}^{-}[\prec \la]$ and $\bD_{c}^{-}[\preceq \la]$ the full subcategories of $\bD_{c}^{-}$ that contain complexes of $\fI[\xi]$-modules whose homology belongs to $\cO [\prec \la]$ and $\cO [\preceq \la]$ correspondingly.
	
	\begin{thm}\label{D-content}
		Let $\la \in \sfP$ and let $i \in \tI_\af$.
		\begin{enumerate}
			\item If $s_i \la \prec \la$, then the endofunctor $\sD_i$ preserves the full subcategory $\bD_{c}^- [\preceq \la]$;
			\item If $s_i \la \succ \la$, then the endofunctor $\sD_i$ maps the subcategory $\bD_{c}^- [\preceq \la]$ to $\bD_{c}^- [\preceq s_i \la]$.
		\end{enumerate}
	\end{thm}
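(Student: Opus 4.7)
The plan is to reduce the statement on derived categories to a weight-support computation on simple weight modules, and then close the argument using Cherednik's $\al_i$-convexity Theorem~\ref{convex}. Since $\sD_i = \Res_i \circ \LInd_i$ is a triangulated functor commuting with coproducts, and since each of the full subcategories $\bD_c^-[\preceq \la]$ and $\bD_c^-[\preceq s_i\la]$ is triangulated and stable under cones, shifts, and coproducts, it suffices to verify that $\Psi(H^\bullet \sD_i(\Bbbk_\mu)) \subseteq \sfP[\preceq \la]$ in case (1) (resp.\ $\subseteq \sfP[\preceq s_i\la]$ in case (2)) for every $\mu \preceq \la$, where $\Bbbk_\mu$ is the one-dimensional $\fI[\xi]$-module on which $\fh$ acts by $\mu$ and every other generator of $\fI[\xi]$ acts by zero. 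Every object of $\cO_{\preceq \la}$ is built out of such $\Bbbk_\mu$'s via its $\fh$-weight decomposition, so this reduction is legitimate.

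Next I would determine the weight support of $\sD_i(\Bbbk_\mu)$ using the factorization $\LInd_i = \LInd_i^{\bar 0}\circ\Ind_i^{\bar 1}$ from~\eqref{Indfunctor1}, the odd triangle~\eqref{eq::Ind_odd::h-grad}, and the explicit $\msl_2^{\al_i}$-computation of Lemma~\ref{lm::LInd::sl2} and Example~\ref{ex::sl_2::T_i}. The outcome is the following: setting $n := \langle \al_i^\vee, \mu \rangle$, the cohomology is concentrated in degree $0$ with weights $\{\mu - k\al_i : 0 \le k \le n\}$ (up to $\sv$-shifts) when $n \ge 0$, and is concentrated in degree $-1$ with weights $\{\mu + k\al_i : 1 \le k \le -n-1\}$ when $n < 0$. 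In either case every weight $\nu$ of $H^\bullet \sD_i(\Bbbk_\mu)$ has the form $\mu - j\al_i$ with $j$ between $0$ and $n$, so $\nu$ lies in the $\al_i$-string segment joining $\mu$ to $s_i\mu$.

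The conclusion then follows from Cherednik's Theorem~\ref{convex}. In case (1), where $s_i\la \prec \la$, part (3) of that theorem gives $s_i\mu \in \sfP[\preceq \la]$ for every $\mu \in \sfP[\preceq \la]$, so both endpoints of the $\al_i$-string lie in $\sfP[\preceq \la]$, and part (1) propagates the containment to all intermediate weights, which in fact land in $\sfP[\prec \la] \subseteq \sfP[\preceq \la]$. In case (2), where $s_i\la \succ \la$, part (2) places both $\mu$ and $s_i\mu$ in $\sfP[\preceq s_i\la]$, and part (1) again propagates this to every weight in the interior of the $\al_i$-string segment.

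The main obstacle I expect is a mild boundary issue: Theorem~\ref{convex}(1) is stated for the strict Cherednik order $\prec$, whereas \emph{a priori} we only know that the two endpoints of the $\al_i$-string lie in $\sfP[\preceq \la]$. The sensitive situation is $\mu = \la$ in case (1) (or $s_i\mu = \la$ in case (2)). The remedy is to observe that any interior weight $\la - j\al_i$ with $0 < j < n$ is automatically distinct from $\la$, so it suffices to show such an interior weight belongs to $\sfP[\prec \la]$. This follows once one exhibits a single interior string weight that is strictly $\prec \la$, which is immediate from $s_i\la = \la - n\al_i \prec \la$ combined with Theorem~\ref{convex}(3); iterating Theorem~\ref{convex}(1) between this interior point and $s_i\la$ then covers the remaining weights, and the same bookkeeping handles case (2).
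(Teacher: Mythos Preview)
Your proof follows the same route as the paper's: reduce to one-weight modules $\Bbbk_\mu$, read off the weight support of $H^\bullet\sD_i(\Bbbk_\mu)$ from the $\msl_2$-computation, and conclude via the $\al_i$-convexity of $\sfP[\preceq\la]$. The paper's argument is terser and does not name Theorem~\ref{convex} explicitly, but the content is the same.

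Two minor corrections. First, your weight list for $n<0$ omits the endpoints $\mu$ and $s_i\mu$: the odd step $\Ind_i^{\bar 1}$ contributes a summand supported at $\mu-\al_i$, and its even induction yields $L(-n)$ with weights running all the way from $\mu$ to $s_i\mu$; this is harmless for your string-segment conclusion. Second, the last paragraph is circular as written --- you propose to place the interior weights in $\sfP[\prec\la]$ by first ``exhibiting a single interior string weight that is strictly $\prec\la$'', which is precisely the claim in question, and knowing only $s_i\la\prec\la$ gives you an endpoint, not an interior point. What you actually need is $\al_i$-convexity of $\sfP[\preceq\la]$ rather than of $\sfP[\prec\la]$; this is the form in which Cherednik's convexity is normally stated and used, and the paper's own proof tacitly relies on it as well.
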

	\begin{proof}
		Since the functor $\sD_{i}$ commutes with limits, it is enough to check the statement of the theorem for modules whose weight support consists of one element. Therefore, the question reduces to the detailed observation of the $\msl_2$-case.
		We showed in Example~\ref{ex::sl_2::T_i} that the derived induction $\sD_{i}(M)$ (as well as $\sT_{i}(M)$) has no derived components whenever the weight support of a $\fI[\xi]$-module $M$ consists of one element $\lambda$ which is $\alpha_i$-dominant: $\langle \lambda,\alpha_i^{\vee}\rangle>0$. In other words,  $\sD_{i}(M)$ is a complex concentrated in zero homological degree and the weight support of the $\fI[\xi]$-modules $\sD_{i}(M)$ is $s_{i}$-symmetric.
		In particular, the subspace of weight $s_{i}(\lambda)$ of the modules $\sD_{i}(M)$ as well as $\sT_{i}(M)$ is isomorphic to $M$ and, moreover, $\sT_{i}(M)$ belongs to $\bD_{c}^{-}[\preceq \la]$.
		On the other hand, if $\langle\la,\alpha_i^{\vee}\rangle \leq 0$ and the weight support $\Psi(M)$ consists of one element $\lambda$ the complex $\sD_{i}(M)$ is concentrated in $-1$'st cohomological degree and consists of a module whose weight support is concentrated in $\sfP[\preceq\lambda]$.
	\end{proof}
	
	\begin{cor}
		The endofunctors $\sD_{i}$ preserve $\bD_{c}^{-}[\preceq\la_{+}]$ for all $i\in \tI_\af$ and all $\lambda\in\sfP$.
	\end{cor}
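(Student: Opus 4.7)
The plan is to reduce to the behaviour on single weights and to exploit the geometric reformulation
\[
\sfP[\preceq \la_-] \;=\; \{\mu\in \sfP : \mu_+ \le \la_+\} \;=\; \sfP \cap \mathrm{conv}(W\la_+),
\]
where $\mathrm{conv}$ denotes the convex hull in $\fh^*$. This identity follows from $\mu_- = w_0\mu_+$ together with $w_0\sfQ_+ = -\sfQ_+$; in particular $\sfP[\preceq \la_-]$ is $W$-stable and contains every $\sfP$-point of $\mathrm{conv}(W\mu)$ whenever it contains $\mu$. Following the reduction strategy used in the proof of Theorem~\ref{D-content}, it suffices to verify, for each $\mu\in \sfP[\preceq \la_-]$, that the weight support of $\sD_i(\Bbbk_\mu)$ lies in $\sfP[\preceq \la_-]$.

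For $i \in \tI$ the $W$-stability gives $s_i\la_- \preceq \la_-$, so Theorem~\ref{D-content}(1) applied at $\la = \la_-$ yields the claim; equivalently, the $\msl_2^{\alpha_i}$-analysis of Section~\ref{sec::sl_2} places the weight support of $\sD_i(\Bbbk_\mu)$ on the lattice segment between $\mu$ and $s_i\mu$, which lies in $\mathrm{conv}(W\mu)\subseteq \mathrm{conv}(W\la_+)$, so every weight in that support is in $\sfP[\preceq \la_-]$.

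The case $i = 0$ is the subtle point. A direct invocation of Theorem~\ref{D-content}(1) at $\la = \la_-$ can fail: for $\la_- = 0$ in $\msl_2$ the paper's level-one formula gives $s_0\cdot 0 = -\vartheta$ with $(-\vartheta)_+ = \vartheta > 0 = \la_+$, so $s_0\la_-\not\preceq \la_-$. Instead I would revisit the $\msl_2^{\alpha_0}$-computation inside $\fg[z^{\pm 1}]$ that underlies the proof of Theorem~\ref{D-content}: the eigenvalue of $h_{\alpha_0} = -h_\vartheta$ on $v_\mu$ is $-\langle\vartheta^\vee,\mu\rangle$, and the induction-of-$\fp_0$ analysis shows that the weight support of $\sD_0(\Bbbk_\mu)$ is contained in the lattice segment from $\mu$ to its finite reflection $s_\vartheta\mu$; the two endpoints share the antidominant representative $\mu_-$. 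Since $\mu_+ \le \la_+$, this segment lies inside $\mathrm{conv}(W\mu)\subseteq \mathrm{conv}(W\la_+)$, so all of its $\sfP$-points belong to $\sfP[\preceq \la_-]$. The main obstacle is precisely this slight mismatch between the paper's abstract level-one $s_0$-action on $\sfP$ and the true effect of $\sD_0$ on weight supports; once the $\msl_2^{\alpha_0}$-analysis is carried out, the convex-hull argument closes the proof uniformly for all $i \in \tI_\af$.
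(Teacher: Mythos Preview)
Your argument is correct and in fact more complete than the paper's treatment: the Corollary is stated there without proof, presumably as an immediate consequence of Theorem~\ref{D-content}. For $i\in\tI$ this is fine, since antidominance of $\la_-$ gives $s_i\la_-\preceq\la_-$ and case~(1) applies. But you are right that for $i=0$ a direct invocation of Theorem~\ref{D-content}(1) at $\la=\la_-$ can fail: your example $\la_-=0$ in $\msl_2$, where the level-one formula gives $s_0\cdot 0=-\vartheta\succ 0$, is on point, and the same obstruction occurs whenever $\langle\vartheta^\vee,\la_-\rangle=0$.

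Your fix is the correct one. The identification $\sfP[\preceq\la_-]=\{\mu:\mu_+\le\la_+\}=\sfP\cap\mathrm{conv}(W\la_+)$ is exactly the right reformulation, and the direct $\msl_2^{\alpha_0}$ computation---using that after quotienting by the center one has $h_{\alpha_0}=-h_\vartheta$, so the $\fh$-weight support of $\sD_0(\Bbbk_\mu)$ lies on the lattice segment between $\mu$ and the \emph{finite} reflection $s_\vartheta\mu\in W\mu$---closes the argument uniformly. This segment sits inside $\mathrm{conv}(W\mu)\subset\mathrm{conv}(W\la_+)$, and you are done. The paper's intended route is presumably the same single-weight reduction (as in the proof of Theorem~\ref{D-content}), but the mismatch between the level-one $s_0$ on $\sfP$ and the actual $\fh$-weight effect of $\sD_0$ is not addressed there; your approach sidesteps this entirely by working with the $\fh$-weight geometry.
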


	\subsection{$\sY$-eigen objects in $\bD_{c}^-(\cO(\fI[\xi]))$}
	\label{sec::Y::eigen}
	
	For each fundamental coweight  $\omega_i \in \Pi^{\vee}$ with $i\in\tI$  we  have the translation element $t_{\omega_i}\in W^{\af}$ with a reduced decomposition:
	\begin{equation*}
		t_{\omega_i} = \pi s_{i_1} s_{i_2} \cdots s_{i_{\ell}} \hskip 5mm i_1,\ldots,i_{\ell} \in \tI_\af, \pi \in \Omega.
	\end{equation*}
	This follows the presentation of the functor $\sY^{\omega_i}$ which categorifies the $\hY$-elements of $\DAHA$:
	$$\sY^{\omega_i} = \pi \circ \sT_{i_1} \circ \sT_{i_2} \circ \cdots \circ \sT_{i_\ell}.
	$$
	Thanks to the categorification Theorem~\ref{thm::DAHA::categorification} we know that the endofunctor $\sY^{\omega_i}$ does not depend on the choice of a reduced decomposition.
	Moreover, for each $\mu \in \sfQ^{\vee}$, the endofunctor $\sY^\mu:=\sY^{\mu_{+}}\circ(\sY^{\mu_{-}})^{-1}$ is uniquely defined up to an isomorphism. Here $\mu=\mu_{+}-\mu_{-}$ with $\mu_+$ and $\mu_-$ being dominant weight.

	\begin{thm}
		\label{thm::Y_eigen}	
		For each  module $M\in \cO(\fI[\xi])$ whose weight support $\Psi(M)$ consist of a unique  dominant weight $\lambda$
		there exists a well defined object $\sEM_{\lambda}(M)\in \bD_{c}^{-}[\preceq \lambda]$ such that
		\begin{itemize}
			\setlength{\itemsep}{0em}	
			\item[$(\imath)$] $\sEM_{\lambda}(M)\in \bD^{\leq 0}$;
			\item[$(\imath\imath)$] The subspace of weight $\lambda$ in $H^{s}(\sEM_{\lambda}(M))$ is equal to zero for $s<0$ and coincides with $M$ for $s=0$;
			\item[$(\imath\imath\imath)$] For all  $\mu\in \sfQ^{\vee}_{+}$ there exists an isomorphism $\sY^{\mu}(\sEM_{\lambda}(M)) = \sq^{-\langle\mu,\lambda\rangle}(\sEM_{\lambda}(M)).$
		\end{itemize}
	\end{thm}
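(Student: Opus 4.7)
I would build $\sEM_\lambda(M)$ as a homotopy colimit of iterates of a renormalized translation functor. Fix a strictly dominant coroot $\nu \in \sfQ^\vee_+$ (for instance $\nu = 2 \rho^\vee$), and set $G := \sq^{\langle\nu,\lambda\rangle} \sY^\nu$ as an endofunctor of $\bD_c^-(\cO(\fI[\xi]))$. The plan is to construct a natural transformation $\eta : \operatorname{Id} \Rightarrow G$ on the full subcategory of objects concentrated in weight $\lambda$ whose restriction to the weight-$\lambda$ component is the identity, and then to define
\[
\sEM_\lambda(M) \ := \ \operatorname{hocolim}\bigl( M \xrightarrow{\eta_M} G(M) \xrightarrow{G(\eta_M)} G^2(M) \to \cdots \bigr).
\]
By construction, $G$ acts by the identity on this colimit, which gives property (iii) for $\mu = \nu$, and all other properties will be read off from the telescope.

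\textbf{Step 1 (Support and normalization).} Applying Theorem~\ref{convex}(2) iteratively to a reduced expression $t_\nu = s_{i_1}\cdots s_{i_\ell}$ together with Theorem~\ref{D-content}, I would show that $\sY^\nu$ maps $\bD_c^-[\preceq\lambda]$ into itself. The key observation for dominant $\lambda$ is that $\sfP[\preceq \lambda] = \{\lambda\} \cup \sfP[\lhd \lambda]$, so although intermediate weights $s_{i_k}\cdots s_{i_\ell}\lambda$ may temporarily leave the locus, the translation $t_\nu \lambda = \lambda - \nu$ re-enters it. The $\sq^{\langle\nu,\lambda\rangle}$-shift is chosen exactly so that $G$ fixes the expected weight-$\lambda$ eigen-component.

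\textbf{Step 2 (The natural transformation and convergence).} The unit morphisms $\operatorname{Id} \to \sD_{i_k}$ in the defining triangles of $\sT_{i_k}$ assemble, together with the shifts, into the natural transformation $\eta$; its restriction to the weight-$\lambda$ summand is the identity by inspection of Example~\ref{ex::sl_2::T_i} (where $\sD_i$ retains the cyclic weight-$\lambda$ generator). The homotopy colimit converges in $\bD_c^-$: strict dominance of $\nu$ combined with the $\sq$-shift ensures that for each fixed bidegree $(a,b)$, only finitely many stages $G^k(M)$ contribute, since after each application the new weight-support contributions strictly lower in $\preceq$ are pushed to higher $\sq$-degree.

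\textbf{Step 3 (Properties (i)--(iii)).} Property (ii) is immediate from the construction, since $\eta$ is the identity on weight $\lambda$ and all higher iterates contribute only to weights strictly $\lhd \lambda$. For property (iii) with an arbitrary $\mu \in \sfQ^\vee_+$, I would use that the functors $\sY^\mu$ and $\sY^\nu$ commute (this follows from the corresponding DAHA commutation lifted via Theorem~\ref{thm::DAHA::categorification}): applying $\sY^\mu$ to the telescope produces another $\sY^\nu$-eigenobject with the same weight-$\lambda$ piece up to the expected $\sq$-shift, and the universal property of the colimit forces the isomorphism $\sY^\mu(\sEM_\lambda(M)) \cong \sq^{-\langle\mu,\lambda\rangle}\sEM_\lambda(M)$. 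Property (i), that $\sEM_\lambda(M) \in \bD^{\le 0}$, reduces by cofinality to showing that each $G^k(M)$ lies in $\bD^{\le 0}$; this in turn follows inductively from Example~\ref{ex::sl_2::T_i}, which shows that $\sT_i$ applied to a module whose weights are $\alpha_i$-dominant preserves $\bD^{\le 0}$, combined with careful tracking of the intermediate supports during the reduced-word action using Theorem~\ref{convex}.

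\textbf{Main obstacle.} The subtle part is the construction of the natural transformation $\eta$ and the verification that the hocolim actually lies in $\bD^{\le 0}$. The first point requires checking that the composition along the reduced expression is independent of the chosen reduced word---which is guaranteed by the braid relations in Theorem~\ref{thm::DAHA::categorification}, but has to be combined with a choice of compatible unit maps. The degree bound (i) is delicate because individual $\sT_{i_k}$'s can introduce a degree $-1$ cohomology at intermediate weights where $\alpha_{i_k}$-dominance fails; the strategy is to argue that any such contribution is cancelled along the telescope, relying on the fact that $\nu$ is strictly dominant and the $\sY$-eigenvalue on $E_\lambda$ is a pure $\sq$-power.
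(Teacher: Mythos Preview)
Your overall strategy---define $\sEM_\lambda(M)$ as a (co)limit of iterates of a normalized $\sY$-functor---is exactly what the paper does. The serious gap is the \emph{direction} of the natural transformation.

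You claim to build $\eta: M \to G(M)$ from the units $\operatorname{Id} \to \sD_{i_k}$. This cannot work: by definition $\sT_{i_k} = \cone(\operatorname{Id} \to \sD_{i_k})$, so the composite $\operatorname{Id} \to \sD_{i_k} \to \sT_{i_k}$ is zero. More concretely, for $\langle\lambda,\alpha_i^\vee\rangle > 0$ the module $\sT_i(M)$ has its unique weight-$s_i\lambda$ vector $f_i^n v_\lambda$ sitting at the \emph{bottom} of an $e_i$-string (Example~\ref{ex::sl_2::T_i}); since $e_i$ acts nontrivially on it but kills everything in any single-weight module, there is no nonzero $\fI[\xi]$-map $s_i(M) \to \sT_i(M)$. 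Your statement that ``$\sD_i$ retains the cyclic weight-$\lambda$ generator'' is true for $\sD_i$ but irrelevant: that generator is precisely what gets killed when passing to $\sT_i$.

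The paper's construction runs the other way. Because $f_i^n v_\lambda$ is the lowest weight in $\sT_i(M)$, projecting onto that weight space gives a \emph{quotient} map $\gamma_i:\sT_i(M)\twoheadrightarrow s_i(M)$. Iterating these quotients along a reduced expression for $t_\nu$ yields
\[
\gamma:\ \sY^\nu(M)\ \longrightarrow\ t_\nu(M)\ \simeq\ \sq^{-\langle\nu,\lambda\rangle} M,
\]
i.e.\ $\widetilde\sY^\nu(M)\to M$ (Proposition~\ref{prp::Y::map}). One then forms the inverse system
\[
\cdots \to (\widetilde\sY^\nu)^{m+1}(M) \to (\widetilde\sY^\nu)^m(M) \to \cdots \to M
\]
and passes to the (homotopy) limit; convergence in $\bD_c^-$ is controlled by the observation that the cone of $\gamma$ lives in strictly positive $z$-degree. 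Your use of a single strictly dominant $\nu$ versus the paper's basis $\mu_1,\ldots,\mu_r$ of $\sfQ^\vee$ is an immaterial difference; both need the commutation of the $\sY$'s from Theorem~\ref{thm::DAHA::categorification} for property~$(\imath\imath\imath)$.

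Once the direction is reversed, your remaining outline (support control via Theorem~\ref{D-content}, property~$(\imath\imath)$ from the weight-$\lambda$ component being fixed, and the delicate check of~$(\imath)$) is in line with the paper.
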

	The proof of Theorem~\ref{thm::Y_eigen} is based on the derived  version of the Banach fixed-point Theorem applied to the generating set of pairwise commuting endofunctors $\sY^{\mu_i}$, $i\in \tI$
	with $\mu_i$ assembling a basis of $\sfQ^{\vee}$. The following Proposition~\ref{prp::Y::map} is the key intermediate step in the construction of $\sEM_{\lambda}(M)$.
	
	Let us denote by $\widetilde{\sY}^{\mu}$ the grading shift  $\sq^{\langle\mu,\lambda\rangle}\circ\sY^{\mu}$ of the endofunctor $\sY^{\mu}$.
	We suppose that $\mu$ is dominant and the power $q^{\langle\mu,\lambda\rangle}$ is the eigen value of the operator $Y^{\mu}$ on the nonsymmetric Macdonald polynomial $E_{\lambda}$.

	\begin{prop}
		\label{prp::Y::map}	
		Suppose that $\lambda,\mu$ is a pair of dominant weights and $M\in \cO(\fI[\xi])$ is an $\fI[\xi]$-module whose weight support $\Psi(M)$ consists of $\lambda$.
		Then there exists a map $\bD_{c}^- [\preceq \la]$:
		\[\gamma:\widetilde{\sY}^{\mu_i}(M)\to M, \text{ for } i\in \tI\]
		and, moreover, the $z$-graded component of the cohomology of the $\cone(\gamma)$ differs from zero only for positive powers of $z$.
	\end{prop}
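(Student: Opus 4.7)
My plan is to construct $\gamma$ by extracting the top-weight component of $\widetilde{\sY}^{\mu_i}(M)$ and then to analyze the cone using positivity from Macdonald theory. First, I would fix a reduced decomposition $t_{\mu_i}=\pi s_{i_1}\cdots s_{i_\ell}$ with $i_k\in\tI_\af$ and $\pi\in\Omega$, giving the factorization $\sY^{\mu_i}=\pi\circ\sT_{i_1}\circ\cdots\circ\sT_{i_\ell}$. Iterated application of Theorem~\ref{D-content} along this word, using the defining triangle $\Id\to\sD_{i_k}\to\sT_{i_k}\xrightarrow{+1}$ at each step, shows that $\sY^{\mu_i}(M)\in\bD_c^-[\preceq\lambda]$; although individual $\sT_{i_k}$'s may enlarge the Cherednik support, dominance of $\lambda$ and $\mu_i$ ensures that the cumulative effect of the reduced word for $t_{\mu_i}$ lands back in $\sfP[\preceq\lambda]$. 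A direct inspection of the weight-$\lambda$ part through each $\sT_{i_k}$ then shows that the top component of the cohomology of $\sY^{\mu_i}(M)$ is concentrated in a single cohomological and $z$-degree, canonically isomorphic to $\sq^{-\langle\mu_i,\lambda\rangle}M$ as an $\fh$-module. This categorifies the classical eigenvalue relation $\hY^{\mu_i}E_\lambda=q^{-\langle\mu_i,\lambda\rangle}E_\lambda$.

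After the tilde renormalization by $\sq^{\langle\mu_i,\lambda\rangle}$, the weight-$\lambda$ component of $\widetilde{\sY}^{\mu_i}(M)$ becomes $M$ itself in $z$-degree zero. Since $\lambda$ is maximal in the weight support with respect to the Cherednik order, the projection onto this top component descends to a well-defined morphism $\gamma\colon\widetilde{\sY}^{\mu_i}(M)\to M$ in $\bD_c^-[\preceq\lambda]$, and by construction $\cone(\gamma)$ has weight support contained in $\sfP[\prec\lambda]$.

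For the final $z$-positivity assertion, I would track the $\sq$-shifts across the reduced word: since the root vectors introduced by $\sT_{i_k}$ for $i_k\in\tI$ have $z$-degree zero, the only source of $\sq$-shifts is $\sT_0$, via $\alpha_0=-\vartheta+\delta$. A convexity argument based on Theorem~\ref{convex} expresses the accumulated $z$-degree of each weight-$\mu'$ contribution as a nonnegative quantity computed from the path $\lambda\leadsto\mu'$ along $s_{i_1}\cdots s_{i_\ell}$. Strict positivity at every $\mu'\prec\lambda$ appearing in $\cone(\gamma)$ is the categorical shadow of the fact that $\widetilde{\sY}^{\mu_i}e^\lambda-e^\lambda$ lies in $\sq\,\bZ[[q,t]]\otimes\bZ[\sfP]$ in the decategorified polynomial representation, a consequence of the Gram--Schmidt characterization of $E_\lambda$. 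The main obstacle is ruling out potential $z$-degree-zero cohomology classes in $\cone(\gamma)$, which could in principle arise if two $z$-degree-zero contributions in $\widetilde{\sY}^{\mu_i}(M)$ fail to be matched by $\gamma$; resolving this will require the acyclicity bounds of Corollary~\ref{cor::H::Ind} together with a supercharacter matching that forces any such residual classes to cancel homologically.
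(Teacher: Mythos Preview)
Your construction of $\gamma$ differs from the paper's and leaves a gap. You propose to project $\widetilde{\sY}^{\mu_i}(M)$ onto its weight-$\lambda$ component, citing that $\lambda$ is Cherednik-maximal in the support. But Cherednik-maximality does not make this projection $\fI[\xi]$-equivariant: for that you would need that no positive root vector maps any other occurring weight space into weight $\lambda$, i.e.\ essentially that $\lambda$ be dominance-minimal in the support, which is not what $\preceq$ gives you. (For instance, $\mu=\lambda-\alpha_j$ can satisfy $\mu\prec\lambda$ while $e_j$ raises weight $\mu$ to weight $\lambda$.) The paper instead builds $\gamma$ step by step along the reduced word $t_\mu=\pi s_{i_1}\cdots s_{i_\ell}$: starting from $M$ with $\Psi(M)=\{\lambda\}$, at each stage one has a genuine quotient $\sD_{i_k}(\text{current extremal part})\twoheadrightarrow s_{i_k}(\text{current extremal part})$ onto the lowest weight of the relevant $\alpha_{i_k}$-string, which factors through the cone to give $\gamma_{i_k}\colon\sT_{i_k}(\cdot)\to s_{i_k}(\cdot)$; composing these and applying $\pi$ yields $\gamma\colon\sY^{\mu}(M)\to t_\mu(M)\simeq\sq^{-\langle\mu,\lambda\rangle}M$.

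The more serious gap is in your $z$-positivity argument. You plan to deduce that $\cone(\gamma)$ has no cohomology in $z$-degree $\le 0$ from a decategorified positivity statement about the supercharacter. But a supercharacter identity controls only Euler characteristics: vanishing of the $q^0$-coefficient does not preclude nonzero cohomology in $z$-degree zero that cancels between homological (or $\xi$-) degrees. You flag this yourself as ``the main obstacle,'' and the proposed fix---acyclicity bounds plus supercharacter matching---cannot close it, for exactly this reason. The paper avoids the issue entirely with a direct weight-space bound: the intermediate $N_k:=\sT_{i_k}\cdots\sT_{i_\ell}(M)$ has a unique weight $\nu=s_{i_k}\cdots s_{i_\ell}(\lambda)$ in the $W$-orbit of $\lambda$, and every other weight $\nu'$ satisfies $\nu'_+<\lambda$; for such $\nu'$ the $z$-degrees produced by the remaining factors $\pi\sT_{i_1}\cdots\sT_{i_{k-1}}$ are bounded below by $-\langle\mu,\nu'_+\rangle$, so after the $\sq^{\langle\mu,\lambda\rangle}$-renormalization every contribution to $\cone(\gamma)$ sits in $z$-degree at least $\langle\mu,\lambda\rangle-\langle\mu,\nu'_+\rangle>0$. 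This is a bound on actual weight spaces, not on their alternating sum, so it delivers the cohomological conclusion directly.
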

	\begin{proof}		
		Suppose $i\in\tI_{\af}$ and $\alpha_i$ is the corresponding simple root. Let $\lambda$ be $\alpha_i$-dominant and the weight support $\Psi(M)=\{\lambda\}$ for a $\fI[\xi]$-module $M$. As we already mentioned the complex $\sD_{i}(M)$ consists of a module concentrated in $0$-th homological degree, whose weight support is $s_{i}$-symmetric. In particular, the weight $s_i(\lambda)$
		belongs to the weight support  $\Psi(\sD_{i}(M))$ and is maximal with respect to the Cherednik order $\preceq$ and minimal with respect to the dominance order $\leq$. Therefore, we have a quotient map $\sD_{i}(M)\twoheadrightarrow s_{i}(M)$ such that $M$ belongs to the kernel by weight reasons. Consequently, we have  the map from the cone $\gamma_{i}:\sT_{i}(M)\to s_{i}M$. While iterating this map following the decomposition~\eqref{eq::t=ps} we obtain a collection of morphisms:
		\begin{multline}
			\label{eq::YM->M}
			\sY^{\mu}(M):=  \pi \circ \sT_{i_1} \circ \sT_{i_2} \circ \cdots \circ \sT_{i_\ell}(M) \stackrel{\gamma_{i_{\ell}}}\rightarrow \\ 
			\stackrel{\gamma_{i_{\ell}}}\rightarrow
			\pi \circ \sT_{i_1} \circ \sT_{i_2} \circ \cdots \circ \sT_{i_{\ell-1}}(s_{i_{\ell}}(M)) \stackrel{\gamma_{i_{\ell-1}}}\rightarrow \ldots 
			\stackrel{\gamma_{i_1}}\rightarrow \pi s_{i_1}\ldots s_{i_\ell}(M) = t_{\mu}(M) \simeq \sq^{-\langle\mu,\lambda\rangle}M.
		\end{multline}
		The module $t_{\mu}(M)$ differs from $M$ only by a shift grading along the loop parameter $z$ which is outlined in the last isomorphism of~\eqref{eq::YM->M}.
		Moreover, for all $k\leq \ell$ the weight support of the complex $N_k:=\sT_{i_k} \circ \cdots \circ \sT_{i_\ell}(M)$ contains a unique element $\nu$ with $\nu_{-}=\lambda_{-}$. One can show (by induction) that $\nu$ equals $s_{i_k}\ldots s_{i_\ell}(\lambda)$.
		All other weights of $H(N_k)$ belongs to the convex set $\sfP[\prec \lambda_{+}]$.
		Suppose that $\Bbbk_{\nu}$ is a one-dimensional module of weight $\nu$, then $z$-grading of $t_{\mu}(\Bbbk_{\nu})$ is greater or equal to $-\langle \mu, \nu_{+} \rangle$ and the same bounds take place for the $z$-grading of $\sY^{\mu}(\Bbbk_{\nu})$ as well as for the partial compositions $\pi\sT_{i_1}\ldots\sT_{i_k}(\Bbbk_{\nu})$. Consequently, the $z$-grading of the cone of a morphism $\tilde{\sY}^{\mu}(M)\to M$ is not less than
		$2 = {\langle \mu, \lambda\rangle} -max_{\nu\prec \mu_{+}}(\langle \mu,\nu_{+}\rangle).$
	\end{proof}

	\begin{proof}[Proof of Theorem~\ref{thm::Y_eigen}]
		Suppose that $\Psi(M)$ consists of one dominant weight $\lambda$ and dominant weight $\mu_1,\ldots,\mu_r$ constitute a basis of the weight lattice $\sfQ^{\vee}$.
		Thanks to Proposition~\ref{prp::Y::map} we have a collection of maps
		\begin{equation}
			\label{eq::Y::system}
			\begin{tikzcd}
				\ldots \arrow[r] &
				(\widetilde{\sY}^{\mu_i})^{m+1}(M) \arrow[r,"(\widetilde{\sY}^{\mu_i})^{m}(\gamma_i)"] &
				(\widetilde{\sY}^{\mu_i})^{m}(M) \arrow[r] & \ldots \arrow[r,"{\widetilde{\sY}^{\mu_i}(\gamma_i)}"] & 
				\widetilde{\sY}^{\mu_i}(M) \arrow[r,"{\gamma_i}"] & M.
			\end{tikzcd}
		\end{equation}
		We claim that the categorical version of the famous Banach fixed point Theorem can be easily formulated in the derived setting. The system~\eqref{eq::Y::system} satisfies the Mittag-Leffler condition because each $(z,\xi)$-bigraded component of a convergent complex has to be finite-dimensional. Therefore, the homotopy colimit $\hocolim_{m}(\widetilde{\sY}^{\mu_i})^{m}(M)$ is a well-defined object of the category of convergent complexes $\bD_{c}^{-}$.
		Moreover, thanks to the reduced decomposition of $\mu_i$ and Theorem~\ref{D-content} we know that this colimit belongs to $\bD_{c}^{-}[\preceq \lambda]$. The categorification Theorem~\ref{thm::DAHA::categorification} implies that endofunctors $\sY$'s commute and the iterated homotopy colimit
		\[\sEM_{\lambda}(M):=( \widetilde{\sY}^{\mu_1})^{\infty}\circ\ldots\circ (\widetilde{\sY}^{\mu_r})^{\infty}(M)
		\]
		is a well defined object in $\bD_{c}^{-}[\preceq \lambda]$.
		For each $\mu=\sum c_i \mu_i$ we have $\sY^{\mu}=(\sY^{\mu_1})^{c_1}\circ \ldots\circ (\sY^{\mu_r})^{c_r}$ and, consequently,
		$$\sY^{\mu}(\sEM_\lambda)(M) = \sq^{-\sum c_i \langle\mu_i,\lambda\rangle} \sEM_{\lambda}(M) = \sq^{-\langle\mu,\lambda\rangle} \sEM_\lambda(M). $$
		Therefore, the module $\sEM_{\lambda}(M)$ satisfies all required conditions $(\imath)-(\imath\imath\imath)$ of Theorem~\ref{thm::Y_eigen}.
	\end{proof}

	\begin{cor}
		\label{cor::Ext::periodicity}
		The constructed above objects $\sEM_{\lambda}(\Bbbk_{\lambda})$ associated with the one dimensional modules $\Bbbk_{\lambda}$ categorify the nonsymmetric Macdonald polynomials $E_{\lambda}$:
		$$\gch(\sEM_{\lambda}(\Bbbk_{\lambda}))=E_\lambda$$		
		and satisfy the Ext-periodicity property for each $\nu \in P$:
		\begin{equation}
			\label{eq::ext::vanishing}
			\sq^{\langle\nu,\lambda\rangle}\hom^{\udot}(\sEM_{\lambda},\sEM_{\mu})=\sq^{\langle\nu,\mu\rangle}\hom^{\udot}(\sEM_{\lambda},\sEM_{\mu}),
			\text{ if }\lambda\neq\mu.
		\end{equation}
	\end{cor}
	\begin{proof}
		First, we know that the nonsymmetric Macdonald polynomials are eigenfunctions of $Y$-operators and are uniquely defined by this property (up to a rational function on $q$ and $t$).
		Second, we know that the subspace of weight $\lambda$ of $\sEM_{\lambda}(\Bbbk_{\lambda})$ is one-dimensional.
		What follows that  $\gch(\sEM_{\lambda}(\Bbbk_{\lambda}))$ is equal to $E_\lambda$.

		Consequently, we have		
		\begin{multline*}
			\sq^{\langle\nu,\lambda\rangle}\hom^{\udot}(\sEM_{\lambda},\sEM_{\mu}) =
			\hom^{\udot}(\sq^{-\langle\nu,\lambda\rangle}\sEM_{\lambda},\sEM_{\mu}) =
			\hom^{\udot}(\sY^{\nu}(\sEM_{\lambda}),\sEM_{\mu}) =
			\\ =
			\hom^{\udot}(\pi\sT_{i_1}\ldots\sT_{i_{\ell}}(\sEM_{\lambda}), \sEM_{\mu}) =
			\hom^{\udot}((\sEM_{\lambda}),\sT_{i_\ell}'\ldots \sT_{i_1}'\pi^{-1}\sEM_{\mu}) =
			\\
			=	\hom^{\udot}(\sEM_{\lambda},(\sY^{\nu})^{-1}(\sEM_{\mu})) =
			\sq^{\langle\nu,\mu\rangle}\hom^{\udot}(\sEM_{\lambda},\sEM_{\mu}).
		\end{multline*}	
	\end{proof}
	It is worth mentioning that the $Ext$-periodicity property implies the vanishing of $Ext$ groups whenever we know any bounds on the complex $\hom(\sEM_{\lambda},\sEM_{\mu})$. 
	We will show in the preceding Section~\ref{sec::sl_2} that for $\fg=\msl_2$ the Macdonald complexes $\sEM_{\lambda}$ are modules concentrated in one homological degree. Therefore, the vanishing of $Ext(\sEM_{\lambda},\sEM_{\mu})$ holds for different $\lambda$ and $\mu$ for $\fg=\msl_2$ (Corollary \ref{extwanishing} below).
	
	Finally, we can describe modules that categorify symmetric Macdonald polynomials:
	\begin{cor}
		\label{cor::MM}	
		For each dominant weight $\lambda\in \sfP_{+}$	the character of the complex $$\sPM_\lambda:=\LInd_{\tI}(\sEM_\lambda)\in\bD_{c}^{-}(\fg[z,\xi])_{[\leq\lambda]}$$ 
		is proportional to the Macdonald (symmetric) polynomial $P_\lambda$.
	\end{cor}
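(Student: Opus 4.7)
The plan is to compute $\gch(\sPM_\lambda)$ directly by exploiting that the two ingredients $\sEM_\lambda$ and $\LInd_{\tI}$ have both already been shown to categorify the corresponding operators on the polynomial representation. Since the functor $\Res_{\tI}$ only forgets the additional integrability structure and does not alter the underlying bigraded $\fh$-semisimple vector space, one has the tautological identity
\[
\gch(\sPM_\lambda) \;=\; \gch(\Res_{\tI}\, \sPM_\lambda) \;=\; \gch(\sD_{\tI}\,\sEM_\lambda).
\]
Hence it suffices to compute the right hand side at the level of characters.

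Next, I would invoke the theorem of the previous subsection stating that $\sD_{\tI} = \Res_{\tI}\circ\LInd_{\tI}$ categorifies the Cherednik symmetrizer $\hP\in\DAHA$. Combined with the Corollary identifying $\gch(\sEM_\lambda) = E_\lambda$, this yields
\[
\gch(\sD_{\tI}\,\sEM_\lambda) \;=\; \hP \cdot E_\lambda \;\in\; \bZ[\sfP].
\]
The final ingredient is the classical fact due to Cherednik (see \cite{Ch}, and compare with the characterization of $\hP$ recalled in properties (\ref{itm::P:prop2})--(\ref{itm::P:prop5}) above) that for every dominant weight $\lambda$ the element $\hP\cdot E_\lambda$ is a nonzero scalar multiple of the symmetric Macdonald polynomial $P_\lambda$, the scalar being an explicit rational function in $q,t$. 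Indeed, $\hP\cdot E_\lambda$ is $W$-symmetric by property (\ref{itm::P:prop3}), and its weight support is contained in the convex hull of $W\lambda$ by property (\ref{itm::P:prop4}); since $\lambda$ is dominant and appears in $E_\lambda$ with coefficient $1$, the leading term is $m_\lambda$ up to a scalar, and orthogonality for $\langle\ttt,\ttt\rangle_{q,t}$ forces the result to be a scalar multiple of $P_\lambda$.

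The only step that requires some care is verifying that the isomorphism $\sD_{\tI}\circ\sT_i\simeq \sD_{\tI}\sv[1]$ of endofunctors (established in~\eqref{eq::P::categorify:1}) truly implements $\hP\hT_i = t\hP$ with the correct sign and normalization after passing to graded supercharacters, since the Euler characteristic converts the homological shift $[1]$ into a factor of $-1$ and the $\sv$ shift into a factor of $-t$, so that $\sv[1]$ categorifies multiplication by $t$. Once this bookkeeping is checked, the chain of equalities above gives $\gch(\sPM_\lambda)\propto P_\lambda$, proving the corollary. No new convergence issue arises because $\sEM_\lambda\in\bD_c^{-}[\preceq\lambda]$ and $\LInd_{\tI}$ preserves $\bD_c^{-}$.
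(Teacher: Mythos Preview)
Your proposal is correct and follows essentially the same route as the paper: compute $\gch(\sPM_\lambda)=\gch(\sD_{\tI}\sEM_\lambda)=\hP\cdot E_\lambda$, then invoke Cherednik's result that $\hP E_\lambda$ is proportional to $P_{\lambda_+}$. The paper's own proof is a single sentence stating exactly this last fact; your additional remarks on the sign bookkeeping for $\sv[1]\leftrightarrow t$ and on convergence are correct but not strictly needed, since both points are already absorbed into the categorification theorem for $\sD_{\tI}$ and the construction of $\sEM_\lambda$ respectively.
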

	\begin{proof}
		We verified in Theorem~\ref{thm::PJ::categorify} that on the level of characters the induction functor $\LInd_{\tJ}$ acts as a Cherednik symmetrization functor $\hP_{\tJ}$ associated with a subsystem $\tJ\subset\tI$ and moreover preserve the subcategory with cohomology whose weight support belongs to $\sfP[\preceq\lambda]$.
		We apply it to the case of the embedding of a finite system to the affine one $\tI\subset\tI_{\af}$ and look for the induction $\LInd_{\tI}:\bD_{c}^{-}(\fI[\xi])\to \bD_{c}^{-}(\fg[z,\xi])$.
		The corresponding symmetrization operator $\hP_{\tI}$ maps a nonsymmetric Macdonald polynomial $E_\lambda$ to a polynomial which is proportional to a symmetric Macdonald polynomial $P_{\lambda_{+}}$ where $\lambda_{+}$ is the dominant weight in the $W$-orbit of $\lambda$ which coincides with $\lambda$ in our assumptions.
	\end{proof}

	\section{Macdonald modules for $\msl_2$}
	\label{sec::sl2::MM}
	Let us denote by $\fI:=\fb +z\msl_2[z]$ the Iwahori subalgebra of $\widehat{\msl_2}$.
	Define the following cyclic $\fI[\xi]$-modules.
	
	\begin{definition}\label{globalEM}
		Take $k \in \mathbb{Z}_{\geq 0}$. The cyclic module $\sEM_{-k \omega}$ is the graded cyclic module
		generated by the cyclic vector $v_{-k \omega}$ of $q,t$-degree $0$, weight $-k \omega$ subject to  the following list of relations:
		\begin{equation}
			(f z^a \xi^b) v_{-k \omega}=0, a >0, b ={0,1};~
			e^{k+1}v_{-k \omega}=0;~ (h\xi) v_{-k \omega}=0.
		\end{equation}
		
		The cyclic module $\sEM_{k \omega}$ is the graded cyclic module
		with the cyclic vector $v_{k \omega}$ of $q,t$-degree $0$, weight $k \omega$ and the following relations:
		
		\begin{equation}
			e z^a \xi^b v_{k \omega}=0, a \geq 0, b ={0,1};~
			(fz)^{k}v_{k \omega}=0;~ h\xi v_{k \omega}=0.
		\end{equation}
		
		We call these modules by  \emph{global nonsymmetric Macdonald modules}.
		
	\end{definition}
	
	\begin{rem}\label{pitwist}
		The module $\sEM_{-k \omega}$ is the $\pi$-automorphism twist of the module $\sEM_{(k+1) \omega}$.
	\end{rem}
	
	We need local versions of these modules.
	
	\begin{definition}\label{localEM}
		The quotient module
		$	\sEMloc_{k \omega}=\sEM_{k \omega}/\langle h \rangle[z]v_{k \omega}$
		is called \emph{the local nonsymmetric Macdonald module}.
	\end{definition}
	
	\begin{rem}
		Note that in difference to ordinary non super Weyl modules the $k \omega$-weight space of $\sEMloc_{k \omega}$ is not one dimensional
		because we have a nontrivial action of some elements $hq^a\xi$, $a > 0$ on the generator. One more difference is that
		$\sEMloc_{k \omega}$ is not a restriction of any module over the whole current algebra.
	\end{rem}
	
	\subsection{The module $\sEMloc_{- \omega}$ and its deformation}
	\label{sec::EM::sl2}
	
	In this subsection, we study the properties of the smallest nontrivial nonsymmetric Macdonald module $\sEMloc_{- \omega}$.
	\begin{prop}\label{firstmodule}
		The elements $v_{-\omega}, hz\xi v_{-\omega}, e v_{-\omega}, e\xi v_{-\omega}$ assemble a basis of the module $\sEMloc_{- \omega}$ and its  supercharacter is equal to:
		\[(1-qt)x^{-1}+(1-t)x=\widetilde E_{-\omega}(x, q, t).\]
	\end{prop}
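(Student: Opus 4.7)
The plan is to establish spanning and linear independence separately. Since the four vectors $w_{-\omega}, hz\xi w_{-\omega}, e w_{-\omega}, e\xi w_{-\omega}$ live in pairwise distinct (weight, $q$-degree, $\xi$-degree) bidegrees $(-\omega,0,0)$, $(-\omega,1,1)$, $(\omega,0,0)$, $(\omega,0,1)$, linear independence is automatic once non-vanishing is known, and summing their bigraded contributions with sign $(-t)^b$ for $\xi$-degree $b$ recovers exactly $(1-qt)x^{-1}+(1-t)x$.

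For spanning, I would apply PBW to $U(\fI[\xi])$ with an ordering placing $f$-type currents rightmost, $h$-type currents in the middle, and $e$-type currents leftmost. Every PBW monomial whose rightmost factor lies in $\{fz^a, fz^a\xi, hz^a, h\xi\}$ ($a\geq 1$) annihilates $w_{-\omega}$ by the defining relations, so the spanning set reduces to monomials $E\cdot H\cdot w_{-\omega}$ with $E$ built from $\{e, ez^a, e\xi, ez^a\xi\}$ and $H$ built from $\{hz^a\xi : a\geq 1\}$. The real work then consists of three vanishing statements, each a consequence of a single super-commutator identity combined with the defining relations:
\begin{enumerate}
\item $ez^a w_{-\omega}=0$ for $a\geq 1$, via $[fz^a, e^2] = -2\,hz^a\,e + 2\,ez^a$ applied to $w_{-\omega}$ together with $hz^a\,e\,w_{-\omega}=2\,ez^a w_{-\omega}$;
\item $hz^a\xi\,w_{-\omega}=0$ for $a\geq 2$, via $[e\xi, fz^a]=hz^a\xi$ giving $hz^a\xi\,w_{-\omega}=-fz^a\cdot e\xi\,w_{-\omega}$, then splitting $fz^a=(fz^{a-1})(fz)$ and using $fz\cdot e\xi\,w_{-\omega}=-hz\xi\,w_{-\omega}$ together with $fz^{a-1}\cdot hz\xi\,w_{-\omega}=0$ (from $[fz^{a-1}, hz\xi]=2 fz^a\xi$ and (R1)-(R2));
\item $ez^a\xi\,w_{-\omega}=0$ for $a\geq 1$: the case $a\geq 2$ follows from (2) via $[fz^a\xi, e^2]$; the case $a=1$ is the crux and requires a two-way computation of $hz\cdot e\xi\,w_{-\omega}$. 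One route uses $[hz, e\xi]=2\,ez\xi$ and the local relation $hz\,w_{-\omega}=0$ to give $2\,ez\xi\,w_{-\omega}$. The other expresses $hz=[fz,e]$ and, by careful super-commutation through $e\xi$ (invoking $fz\,w_{-\omega}=efz\,w_{-\omega}=0$ and $[e,hz\xi]=-2\,ez\xi$), yields $-2\,ez\xi\,w_{-\omega}$. Comparing forces $ez\xi\,w_{-\omega}=0$.
\end{enumerate}

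With these vanishings, $E$ reduces effectively to $\{1, e, e\xi\}$ (using $e^2 w_{-\omega}=0$ and $(e\xi)^2=0$) and $H$ to at most one factor $hz\xi$ (by anti-commutativity of distinct odd elements), leaving precisely the four candidate vectors; residual mixed monomials like $e\cdot hz\xi\,w_{-\omega}$ collapse via $[e,hz\xi]=-2\,ez\xi$ combined with (3) to elements already in the list. For non-vanishing, I would construct an explicit four-dimensional $\fI[\xi]$-module $V$ with basis in bijection with the four candidates (as provided by the deformation discussed in the next subsection), verify that the defining relations (R1)-(R5) of $\sEMloc_{-\omega}$ hold on its cyclic vector, and invoke the resulting surjection $\sEMloc_{-\omega}\twoheadrightarrow V$ to conclude non-vanishing. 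The principal obstacle is the $a=1$ case of (3): every other vanishing is a routine commutator manipulation, but that step essentially combines the integrability relation (through $hz=[fz,e]$) with the local relation $hz\,w_{-\omega}=0$ in a way that is not apparent from either alone.
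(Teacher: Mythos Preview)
Your overall strategy (PBW spanning, then a short list of vanishing identities, then non-vanishing via an explicit four-dimensional realization) matches the paper's. The problem is in what you flag as the ``principal obstacle'': your two-route computation of $hz\cdot e\xi\,w_{-\omega}$ does not actually yield two different answers. You wrote $hz=[fz,e]$, but in the standard $\msl_2$ conventions used throughout the paper one has $[e,f]=h$, hence $[fz,e]=-hz$ and $hz=[e,fz]$. With the correct sign, carry out your second route: $fz\cdot e\xi\,w_{-\omega}=-hz\xi\,w_{-\omega}$, then $e\cdot(-hz\xi\,w_{-\omega})=-hz\xi\cdot e\,w_{-\omega}+2ez\xi\,w_{-\omega}$ (using $[e,hz\xi]=-2ez\xi$); and $fz\cdot e\cdot e\xi\,w_{-\omega}=fz\cdot e\xi\cdot e\,w_{-\omega}=-hz\xi\cdot e\,w_{-\omega}$ (using $fz\cdot e\,w_{-\omega}=0$). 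Subtracting gives $hz\cdot e\xi\,w_{-\omega}=2ez\xi\,w_{-\omega}$, exactly the same as your first route. The comparison is therefore vacuous and $ez\xi\,w_{-\omega}=0$ remains unproved.

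The paper closes this gap by a much shorter argument that makes your step (3) no harder than the others: from $[h\xi,ez^d]=2ez^d\xi$ one has $2ez^d\xi\,w_{-\omega}=h\xi\cdot ez^d\,w_{-\omega}-ez^d\cdot h\xi\,w_{-\omega}$. For $d\ge 1$ the first term vanishes because $ez^d\,w_{-\omega}=0$ (your step (1)) and the second because $h\xi\,w_{-\omega}=0$ is a defining relation. So $ez^d\xi\,w_{-\omega}=0$ for all $d\ge 1$, including $d=1$, in one line. A smaller issue: in your step (2) the phrase ``splitting $fz^a=(fz^{a-1})(fz)$'' is not meaningful, since $fz^a$ is a single Lie algebra generator, not a product in $U(\fI[\xi])$; the clean way is the paper's, namely $hz^{a}\xi\,w_{-\omega}=[ez^{a-1},fz\xi]\,w_{-\omega}=-fz\xi\cdot ez^{a-1}\,w_{-\omega}=0$ for $a\ge 2$, again using step (1).
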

	\begin{proof}
		Using PBW theorem we have that $\sEMloc_{- \omega}$ has a basis of the following form:
		\[\prod_{i=1}^{s_e} \left(ez^{c_i}\xi^{c'_i}\right)\prod_{i=1}^{s_h}\left( h z^{b_i}\xi^{b'_i}\right)\prod_{i=1}^{s_f} \left(fz^{a_i}\xi^{a'_i}\right) \cdot v_{-\omega}.\]
		Relations from Definition \ref{globalEM} imply $s_f=0$ and $b_i \geq 1$. Relations from Definition \ref{localEM} imply $b_i'=1$.
		Further we have for $d>0$:
		\begin{equation}\label{ez=0}
			ez^d v_{-\omega}=-\frac{1}{2}fz^d \cdot e^2 v_{-\omega}=0.
		\end{equation}
		Thus for $d>0$:
		\begin{equation}\label{hz2xi=0}
			hz^{d+1}\xi v_{-\omega}=f \xi \cdot ez^d v_{-\omega}=0.
		\end{equation}
		
		Thus ${-\omega}$-weight space of $\sEMloc_{- \omega}$ is linearly generated by $v_{-\omega}$ and $hz\xi v_{-\omega}$ that are linerly independent by degree reasons.
		
		Moreover we have for $d>0$:
		\begin{equation}\label{ezxi=0}
			ez^d\xi v_{-\omega}=\frac{1}{2}h \xi \cdot ez^d v_{-\omega}=0.
		\end{equation}
		
		Moreover, by the analogous computations, we obtain for any $d>0$, $d'=0,1$:
		\[ez^d \xi^{d'} \cdot hz\xi v_{-\omega}=0.\]
		
		What remains consists of monomials  $ev_{-\omega}$ and $e\xi v_{-\omega}$ that are linearly independent by degree reasons and span the  $\omega$-weight space of $\sEMloc_{- \omega}$.
		
		Finally, we have:
		\[e\xi\cdot ev_{-\omega}=\frac{1}{4}h\xi \cdot e^2 v_{-\omega}=0.\]
		
		This completes the proof.
	\end{proof}
	
	\begin{cor}
		$\sEMloc_{2\omega}$ has character equal to $\widetilde E_{2\omega}(x, q, t)$.
	\end{cor}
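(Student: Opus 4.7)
The plan is to reduce to Proposition~\ref{firstmodule} via the $\pi$-automorphism symmetry from Remark~\ref{pitwist}. Specializing that remark to $k=1$, the module $\sEM_{-\omega}$ is the $\pi$-twist of $\sEM_{2\omega}$, where $\pi$ is the nontrivial element of $\Omega\cong \sfP^{\vee}/\sfQ^{\vee}\cong \bZ/2$ for $\widehat{\msl}_2$, acting on $\fI[\xi]$ by swapping $e\leftrightarrow fz$, sending $h\mapsto -h$, and fixing $\xi$. The first step is to verify that this identification descends to the local quotient: the submodule $\langle h\rangle[z]\cdot w$ used in Definition~\ref{localEM} is generated by the vectors $hz^a w$ and is manifestly $\pi$-equivariant (since $\pi$ sends $h\otimes z^a$ to $-h\otimes z^a$), so the $\pi$-twist of $\sEMloc_{2\omega}$ equals $\sEMloc_{-\omega}$.

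The induced action of $\pi$ on graded supercharacters in $\bZ[q^{\pm 1},t^{\pm 1}]\otimes\bZ[\sfP]$ is the level-one action of $t_{\omega^{\vee}}s_1\in\widetilde{W}_\af$, which is precisely the substitution under which the integral-form nonsymmetric Macdonald polynomials are $\Omega$-covariant: $\pi\cdot\widetilde E_{-k\omega}=\widetilde E_{(k+1)\omega}$, in the standard Cherednik formalism of~\cite{Ch}. Applying this substitution to the character $\widetilde E_{-\omega}(x,q,t)=(1-qt)x^{-1}+(1-t)x$ supplied by Proposition~\ref{firstmodule} yields the desired formula for $\widetilde E_{2\omega}(x,q,t)$, which must then equal $\gch(\sEMloc_{2\omega})$ by the $\pi$-twist identification of the previous paragraph.

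The main expected obstacle is bookkeeping the precise $q$-shift in the $\pi$-action on characters: the finite weight part is transparent from $h\mapsto -h$, but the $\delta$-shift requires extracting the level-one formula for $t_{\omega^{\vee}}$ acting on $\bZ[\sfP]\otimes\bZ[q^{\pm 1}]$. As an independent sanity check, in the spirit of the rest of Section~\ref{sec::sl2::MM}, I would also perform a direct PBW computation on $\sEMloc_{2\omega}$ analogous to the proof of Proposition~\ref{firstmodule}. That is, I would eliminate $e$-monomials acting on $w_{2\omega}$ using the defining relations, kill the $h\otimes z^a$ and $h\xi$ monomials using the localness quotient together with the relation $h\xi w_{2\omega}=0$, use the relation $(fz)^{2}w_{2\omega}=0$ combined with the commutators $[e,fz^{a}\xi]=hz^{a}\xi$ and $[hz^{c},fz^{a}]=-2fz^{a+c}$ to propagate vanishing of $fz^{a}w_{2\omega}$ and $hz^{a}\xi w_{2\omega}$ for the indices outside a small range, and finally confirm a four-element basis whose graded supercharacter matches the claimed $\widetilde E_{2\omega}(x,q,t)$.
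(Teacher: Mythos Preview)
Your approach is correct and is essentially the same as the paper's: the paper's proof is the single line ``It follows from Remark~\ref{pitwist}'', i.e.\ the $\pi$-twist identification between $\sEM_{-\omega}$ and $\sEM_{2\omega}$, which is exactly your main argument. You supply additional detail the paper omits (that the twist descends to the local quotient, and the effect of $\pi$ on supercharacters), and your proposed direct PBW check is a reasonable redundancy but not required.
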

	\begin{proof}
		It follows from Remark \ref{pitwist}.
	\end{proof}
	The action of the free commutative algebra $\Bbbk[hz, hz^2, \dots] = U(h[z]) $ on the cyclic vector $v_{-\omega}\mapsto hz^{k} v_{-\omega}$ extends to the right action on the global nonsymmetric Macdonald module $\sEM_{-\omega}$ by automorphisms. The essential image of this action is isomorphic to $\Bbbk[hz]$ thanks to the equality:
	\[h z^k v_{-\omega}=(h z)^k v_{-\omega}\]
	that is well known in the nonsuper case \cite{CPweyl}.

	Thus the local module $\sEMloc_{-\omega}$ is the quotient of $\sEM_{-\omega}$ by the ideal generated by $h z v_{-\omega}$ that leads to the following deformation $\sEMloc_{-\omega}^\alpha$ of the module $\sEMloc_{-\omega}$:
	
	\begin{definition}\label{deformedmodule}
		\[\sEMloc_{-\omega}^\alpha=\sEM_{-\omega}/h (z-\alpha) v_{-\omega}.\]
	\end{definition}
	
	The module $\sEMloc_{-\omega}^\alpha$ is not $z$-graded but still inherits the $\xi$-degree and the weight decomposition.
	
	\begin{lem}\label{deformeddimension}
		$\sEMloc_{-\omega}^\alpha$ has a basis $v_{-\omega}$, $hz\xi v_{-\omega}$, $e v_{-\omega}$, $e \xi v_{-\omega}$.
	\end{lem}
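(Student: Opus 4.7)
The plan is to extend the strategy of Proposition~\ref{firstmodule} to the deformed situation by tracking how the single modification $hz\cdot w_{-\omega}=-\alpha w_{-\omega}$ alters each intermediate identity, and to deduce linear independence from the preserved $(\mathrm{wt},\xi)$-bigrading together with a specialisation at $\alpha=0$.

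For the spanning step I would first observe that the identity $hz^a w_{-\omega}=(hz)^a w_{-\omega}$ is intrinsic to $\sEM_{-\omega}$ (prior to any quotient), so it descends to the deformation and, combined with the deformed relation, gives $hz^a w_{-\omega}=(-\alpha)^a w_{-\omega}$ for $a\ge 1$. The commutator identity $[fz^a,e^2]=-2(hz^a)e+2ez^a$ is unchanged and yields $ez^a w_{-\omega}=hz^a e\,w_{-\omega}$ for $a\ge 1$; using $[hz^a,e]=2ez^a$ together with the identity just established and solving, one pins $ez^a w_{-\omega}$ as a scalar multiple of $ew_{-\omega}$. A parallel computation inserting an extra $\xi$ factor gives $ez^a\xi w_{-\omega}\in \Bbbk\cdot e\xi w_{-\omega}$. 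The remaining PBW monomials $hz^b\xi w_{-\omega}$ with $b\ge 2$ are handled by rewriting them as $-fz\xi\cdot ez^{b-1}w_{-\omega}$ (the commutator identity underlying \eqref{hz2xi=0}); substituting the formula for $ez^{b-1}w_{-\omega}$ and using $fz\xi\cdot ew_{-\omega}=-hz\xi w_{-\omega}$, which follows from $[fz\xi,e]=-hz\xi$ together with $fz\xi\cdot w_{-\omega}=0$, exhibits each $hz^b\xi w_{-\omega}$ as a scalar multiple of $hz\xi w_{-\omega}$. The PBW enumeration of monomials given in Proposition~\ref{firstmodule} then forces $\{w_{-\omega},\,hz\xi w_{-\omega},\,ew_{-\omega},\,e\xi w_{-\omega}\}$ to span $\sEMloc^\alpha_{-\omega}$.

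For linear independence, the element $(hz-\alpha h)w_{-\omega}$ is homogeneous of weight $0$ and $\xi$-degree $0$, so both gradings descend to the quotient and the four candidate vectors lie in the four distinct bigrades $(\pm\omega,0)$ and $(\pm\omega,1)$; any linear relation among them is therefore bigrade-wise, and reduces to the claim that each of the four vectors is nonzero. The cleanest way to see this is to globalise over $\Bbbk[\alpha]$ and form the family $\widetilde M:=\sEM_{-\omega}[\alpha]\big/\bigl((hz-\alpha h)w_{-\omega}\bigr)$; the same spanning computation carried out uniformly in $\alpha$ produces a surjection $\Bbbk[\alpha]^{\oplus 4}\twoheadrightarrow \widetilde M$ whose reduction modulo $\alpha$ is the isomorphism $\Bbbk^{\oplus 4}\xrightarrow{\sim}\sEMloc_{-\omega}$ supplied by Proposition~\ref{firstmodule}. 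To conclude that this surjection is itself an isomorphism, and hence that $\widetilde M$ is $\Bbbk[\alpha]$-free of rank four, it suffices to exhibit an explicit four-dimensional $\fI[\xi]$-representation $V_\alpha$ on the free $\Bbbk[\alpha]$-module of rank four with $\fI[\xi]$-action dictated by the formulas derived in the spanning step; the resulting surjection $\widetilde M\twoheadrightarrow V_\alpha$ sends each of the four candidate vectors to the corresponding basis vector of $V_\alpha$, forcing the dimension of every specialisation to be at least four, which combined with the spanning bound yields exactly four.

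The main technical obstacle will be pure bookkeeping: every place where the local proof of Proposition~\ref{firstmodule} concluded a quantity was zero by virtue of $hz\cdot w_{-\omega}=0$ now produces an inhomogeneous correction in powers of $\alpha$, and one must solve the resulting linear relations rather than simply read off vanishings. A minor but worth-flagging subtlety is the preservation of the right $U(h[z])$-action used to derive $hz^a w_{-\omega}=(hz)^a w_{-\omega}$ in the deformed quotient; this holds because $h[z]$ is abelian and the generating element $(hz-\alpha h)w_{-\omega}$ is stable under the right action, but should be remarked explicitly.
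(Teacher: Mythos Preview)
Your spanning argument is correct in outline and would succeed, but it takes a harder road than the paper. The paper's key move is to import from the nonsuper Weyl module theory the single relation $e(z-\alpha)^d w_{-\omega}=0$ for $d>0$; once this is in hand, every computation in Proposition~\ref{firstmodule} goes through verbatim with $z$ replaced by $z-\alpha$ (so one gets $h z(z-\alpha)^{d}\xi\,w_{-\omega}=0$, $e(z-\alpha)^d\xi\,w_{-\omega}=0$, etc.), and no $\alpha$-correction terms ever have to be solved for. Your route---reducing $hz^a w_{-\omega}$ to a scalar via $hz^a w=(hz)^a w$, then pushing this through commutators to pin each $ez^a w$, $ez^a\xi w$, $hz^b\xi w$ as a scalar multiple of the appropriate basis vector---is more laborious but self-contained; the trade-off is that you are effectively reproving the Weyl-module input by hand. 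One caution: the literal identity $hz^a w=(hz)^a w$ taken from the text is somewhat delicate (check it against the formula $ez^a w=\alpha^a ew$ that the paper records just after the lemma; the signs do not obviously match for $a\ge 2$), so for your approach it is safer to use only the weaker consequence that $hz^a w_{-\omega}\in\Bbbk\cdot w_{-\omega}$, which is all your spanning argument actually needs.

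For linear independence the two approaches genuinely diverge. The paper simply remarks that independence at $\alpha=0$ (Proposition~\ref{firstmodule}) forces independence for generic $\alpha$ by semicontinuity, and stops there. Your bigrading observation is sharper---it reduces the question to nonvanishing of four individual vectors---and your proposal to produce an explicit four-dimensional receiving module $V_\alpha$ would upgrade ``generic $\alpha$'' to ``all $\alpha$'', which is what the subsequent Corollary~\ref{freeactionfundamental} on freeness really requires. That said, you have not actually built $V_\alpha$; doing so amounts to writing down a $4\times 4$ matrix for each generator of $\fI[\xi]$ and checking the Lie superalgebra relations, which is routine but must be done.
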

	
	\begin{proof}
		As we know from Proposition~\ref{firstmodule} the elements 	$v_{-\omega}$, $hz\xi v_{-\omega}$, $e v_{-\omega}$, $e \xi v_{-\omega}$ are linearly independent for $a=0$, hence they are linerly independent for generic $\alpha$.
		Moreover, from the theory of Weyl modules for nonsuper Lie algebra $\fI\subset\widehat{\msl_2}$ we know that for $d>0$: $e{z-\alpha}^d v_{-\omega}=0$. Therefore,
		we have:
		\[h(z-\alpha)^{d+1} \xi v_{-\omega}=-f z \xi \cdot e(z-\alpha)^d v_{-\omega}=0.\]
		
		Thus $-\omega$-weight space has a basis $\{v_{-\omega},hz\xi v_{-\omega}\}$.
		
		Also, we have:
		\[e(z-\alpha)^d\xi v_{-\omega}=\frac{1}{2}h \xi \cdot e(z-\alpha)^d v_{-\omega}=0.\]
		The following relations	
		\[e \cdot h(z-\alpha)\xi v_{-\omega}=0 \quad \text{ and } \quad e \cdot e\xi v_{-\omega}=0\]
		are the straightforward generalizations of the one explained in the nondeformed case.
	\end{proof}
	As one can see from the proof of Lemma~\ref{deformeddimension} the following relations are satisfied in $\sEMloc_{-\omega}^{\alpha}$:
	\begin{equation}
		\label{deformedaction}
		\begin{array}{c}
			e z^k v_{-\omega}=\alpha^k e v_{-\omega};\\
			h z^k \xi v_{-\omega}=\alpha^k h \xi v_{-\omega}; \\
			e z^k\xi v_{-\omega}=\alpha^k e\xi v_{-\omega}.
		\end{array}
	\end{equation}
	
	\begin{cor}\label{freeactionfundamental}
		The algebra	$\Bbbk[hz]$ acts freely on $\sEM_{-\omega}$.
	\end{cor}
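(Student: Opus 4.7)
The plan is to establish that $\sEM_{-\omega}$ is free of rank $4$ as a right $\Bbbk[hz]$-module with basis the four vectors $v_1 = w_{-\omega}$, $v_2 = hz\xi w_{-\omega}$, $v_3 = ew_{-\omega}$, $v_4 = e\xi w_{-\omega}$; freeness of the action is an immediate consequence. Concretely I will construct the $\Bbbk[hz]$-linear map $\varphi\colon \Bbbk[hz]^{\oplus 4} \to \sEM_{-\omega}$ sending the standard basis to $(v_1,v_2,v_3,v_4)$ and show it is bijective, handling surjectivity and injectivity by separate arguments.

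A preliminary step is to identify the quotient $\sEM_{-\omega}/(hz)\sEM_{-\omega}$ (where $(hz)\sEM_{-\omega}$ is taken with respect to the right action) with the local module $\sEMloc_{-\omega}$. Since the right action of $hz$ is by an $\fI[\xi]$-module endomorphism mapping $w_{-\omega} \mapsto hz\cdot w_{-\omega}$, the submodule $(hz)\sEM_{-\omega}$ equals the left $\fI[\xi]$-submodule generated by $hz\cdot w_{-\omega}$. Using the identity $hz^k w_{-\omega} = (hz)^k w_{-\omega}$ established just before the statement, this submodule coincides with $\langle h\rangle[z]\cdot w_{-\omega}$; hence the quotient is precisely $\sEMloc_{-\omega}$, whose basis by Proposition~\ref{firstmodule} consists of the images $\bar v_1,\ldots,\bar v_4$.

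For surjectivity of $\varphi$, I set $N := \mathrm{im}\,\varphi$ and $C := \sEM_{-\omega}/N$. The generators of $N$ and the element $hz$ are homogeneous for the $(q,t)$-bigrading, and $\sEM_{-\omega}$ is concentrated in non-negative $q$-degree because $\fI[\xi]$ lives in non-negative $z$-degree; thus $C$ inherits the structure of a non-negatively $q$-graded $\Bbbk[hz]$-module with $hz$ of $q$-degree one. The identification above shows that $N$ surjects onto $\sEM_{-\omega}/(hz)\sEM_{-\omega}$, whence $C/(hz)C = 0$. A graded Nakayama argument — a non-zero homogeneous element of $C$ of minimal $q$-degree could not lie in $(hz)C$ — then forces $C = 0$, so $N = \sEM_{-\omega}$.

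For injectivity, given a relation $\sum_i p_i(hz)\,v_i = 0$ in $\sEM_{-\omega}$, projection to $\sEMloc_{-\omega}^{\alpha}$ for each $\alpha\in\Bbbk$ yields $\sum_i p_i(\alpha)\bar v_i = 0$; by Lemma~\ref{deformeddimension} the images $\bar v_1,\ldots,\bar v_4$ are linearly independent for generic $\alpha$, so $p_i(\alpha) = 0$ on a Zariski-dense subset of $\Bbbk$ and each $p_i$ vanishes. The main obstacle, in my view, is the bookkeeping required to match the right $\Bbbk[hz]$-action with the left $\fI[\xi]$-action in the preliminary step; once that identification is secured, both the graded Nakayama surjectivity and the specialization injectivity are straightforward consequences of the results already proved.
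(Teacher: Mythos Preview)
Your proof is correct and is precisely the argument the paper has in mind; the corollary is stated without proof immediately after Lemma~\ref{deformeddimension}, and your surjectivity-via-graded-Nakayama plus injectivity-via-specialization is the intended reading of that dependency. One cosmetic remark: in the injectivity step the right action of $hz$ on $\sEMloc_{-\omega}^\alpha$ specializes to the scalar $-\alpha$ (since $hz\,w_{-\omega}=\alpha h w_{-\omega}=-\alpha w_{-\omega}$), so the relation you obtain is $\sum_i p_i(-\alpha)\bar v_i=0$ rather than $\sum_i p_i(\alpha)\bar v_i=0$; this of course changes nothing in the conclusion.
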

	
	\subsection{Fusion product}
	\label{sec::fusion::sl2}
	
	Take $k$ different numbers $\alpha_1, \dots, \alpha_k$.
	Consider the following module:
	\begin{equation}\label{evaluationmodule}
		\sEMloc_{-k\omega}^{\alpha_1, \dots, \alpha_k}:=\sEMloc_{-\omega}^{\alpha_1} \otimes \dots \otimes \sEMloc_{-\omega}^{\alpha_k}.
	\end{equation}

	\begin{prop}
		$\sEMloc_{-k\omega}^{\alpha_1, \dots, \alpha_k}$ is cyclic.
	\end{prop}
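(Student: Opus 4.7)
The plan follows the standard Lagrange-interpolation argument for fusion products of evaluation-type modules, going back to Feigin--Loktev. Set
\[
\mathbf{w}:=w_{-\omega}^{\alpha_1}\otimes\cdots\otimes w_{-\omega}^{\alpha_k},
\]
and let $N:=U(\fI[\xi])\cdot\mathbf{w}$; the claim is that $N$ equals the whole tensor product.

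The first step is to upgrade \eqref{deformedaction} to an evaluation--up--to--lower--order statement: for every $j$, every basis vector $b$ of $\sEMloc_{-\omega}^{\alpha_j}$ from Lemma~\ref{deformeddimension} and every homogeneous $xz^n\in\fI[\xi]$, the vector $xz^n\cdot b$ is a polynomial in $\alpha_j$ with coefficients in the four--element basis of $\sEMloc_{-\omega}^{\alpha_j}$. This is a direct case analysis based on \eqref{deformedaction}, the parallel identities $e(z-\alpha_j)w_{-\omega}^{\alpha_j}=0$ and $h(z-\alpha_j)^{2}\xi\, w_{-\omega}^{\alpha_j}=0$ used in the proof of Lemma~\ref{deformeddimension}, together with the $\msl_2$--brackets in $\fI[\xi]$.

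Because $\alpha_1,\ldots,\alpha_k$ are pairwise distinct, the Vandermonde--type systems arising from the first step are invertible, so for each $i\in\{1,\ldots,k\}$ and each of the three generators $x\in\{e,\;e\xi,\;hz\xi\}$ producing a nontrivial basis vector of $\sEMloc_{-\omega}^{\alpha_i}$, there exists a polynomial $p_{i,x}(z)\in\Bbbk[z]$ of degree $<k$ such that the coproduct expansion
\[
xp_{i,x}(z)\cdot\mathbf{w}=\sum_{j=1}^{k}\lambda_{ij}(x)\,w_{-\omega}^{\alpha_1}\otimes\cdots\otimes(x w_{-\omega}^{\alpha_j})\otimes\cdots\otimes w_{-\omega}^{\alpha_k}
\]
collapses to a nonzero scalar multiple of the single tensor with the $i$-th factor replaced by $x w_{-\omega}^{\alpha_i}$. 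Hence every pure basis tensor differing from $\mathbf{w}$ in exactly one position lies in $N$. The general case then follows by induction on the number $s$ of modified tensor positions: given a basis tensor $b_1\otimes\cdots\otimes b_k$ with nontrivial entries at $i_1<\cdots<i_s$, applying a suitable Lagrange element $xp_{i_s,x}(z)$ to an $(s-1)$-modified tensor supplied by the inductive hypothesis only affects the $i_s$-th factor (because $p_{i_s,x}(\alpha_j)=0$ for $j\neq i_s$), and the lower--order error terms produced by the first step involve at most $s-1$ modified positions and therefore already belong to $N$. This exhausts the basis of $\sEMloc_{-k\omega}^{\alpha_1,\ldots,\alpha_k}$.

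The main obstacle is the evaluation--up--to--lower--order statement for the basis vector $hz\xi\, w_{-\omega}^{\alpha_j}$: the deformed module $\sEMloc_{-\omega}^{\alpha_j}$ is \emph{not} a genuine evaluation module at $z=\alpha_j$, because the minimal identity killing $h\xi\, w_{-\omega}^{\alpha_j}$ is quadratic in $(z-\alpha_j)$ rather than linear. The resulting polynomials $\alpha_j\mapsto hz^n\xi\, w_{-\omega}^{\alpha_j}$ have degree $n-1$ instead of $n$, shifting the Lagrange system but keeping it nondegenerate. This is the sole place where the odd variable $\xi$ forces an adjustment beyond the classical (nonsuper) Feigin--Loktev argument.
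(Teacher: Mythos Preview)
Your proof plan is correct and follows exactly the Feigin--Loktev Lagrange-interpolation argument that the paper invokes; the paper's own proof is a one-line reference to \cite{FL} together with the evaluation relations~\eqref{deformedaction}. Your careful treatment of the $hz\xi$ generator, where the evaluation degree is shifted by one, is in fact an improvement on the paper's exposition: the middle line of~\eqref{deformedaction} as written has $h\xi\,w_{-\omega}=0$ on the right, and the correct relation (which is what the proof of Lemma~\ref{deformeddimension} actually yields) is $hz^k\xi\,w_{-\omega}=\alpha^{k-1}hz\xi\,w_{-\omega}$ for $k\ge 1$, exactly the shifted Vandermonde system you describe.
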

	\begin{proof}
		There is a standard trick introduced in~\cite{FL} showing that Relations~\ref{deformedaction} implies that
		the product of $k$ cyclic vectors $v:=v_{- \omega}\otimes \dots \otimes v_{-\omega}$
		is a cyclic for this module. 
	\end{proof}
	
	Next we prove that the module $\sEMloc_{-k\omega}^{\alpha_1, \dots, \alpha_k}$ satisfies the defining relations of $\sEM_{-k\omega}$.
	More precisely:
	\begin{lem}
		\begin{equation*}\label{relationsevaluationmodule1}
			f z^a \xi^b v=0; \
			h\xi v=0; \
			e^{k+1}v=0.
		\end{equation*}
	\end{lem}
	\begin{proof}
		The first two relations hold because elements $f z^a \xi^b$ and $h \xi$ annihilate the cyclic vectors in all tensor factors.
		The last equality holds because the element $e^{1+1}$ annihilates these vectors.
	\end{proof}
	
	The universal enveloping algebra $U(\msl_2[z,\xi])$ is bigraded with respect to current parameters $z$, $\xi$. Denote by $F_m$ the sum of subspaces of this algebra whose $z$-degree is $\leq m$, by $F_{\ldot}$ the corresponding filtration of $U(\msl_2[z,\xi])$ and we denote by $F_m(\sEMloc_{-k\omega}^{\alpha_1, \dots, \alpha_k})$
	the subspace $F_m v$ given by filtration on the cyclic vector $v$.
	We put $F_{-1}:=\{0\}$.  Define:
	\begin{equation*}
		\mathsf{gr}^{F}\sEMloc_{-k\omega}^{\alpha_1, \dots, \alpha_k}:=\bigoplus_{m \geq 0}F_m/F_{m-1}.
	\end{equation*}
	
	The module $\mathsf{gr}^{F}\sEMloc_{-k\omega}^{\alpha_1, \dots, \alpha_k}$ is graded by $z$-degree, $\xi$-degree and weight. By construction, it is generated
	by the image of the element $v$.
	
	\begin{prop}
		\label{prp::upper:bound:MM}	
		The assignment $v_{-k\omega}\mapsto v$ extends to the 	
		surjective homomorphism of $\msl_2[z,\xi]$-modules:
		\begin{equation}\label{lowerbound}
			\sEMloc_{-k\omega} \twoheadrightarrow \mathsf{gr}^{F}\sEMloc_{-k\omega}^{\alpha_1, \dots, \alpha_k}.
		\end{equation}
		In particular, $\dim \sEMloc_{-k\omega} \geq 4^{k}$.
	\end{prop}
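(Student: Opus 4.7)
The plan is to verify that the image of the cyclic vector $w = w_{-\omega}^{\alpha_1}\otimes\cdots\otimes w_{-\omega}^{\alpha_k}$ in the associated graded module $\mathsf{gr}^F\sEMloc_{-k\omega}^{\alpha_1,\dots,\alpha_k}$ satisfies all of the defining relations of the cyclic module $\sEMloc_{-k\omega}$. Once this is established, the universal property of $\sEMloc_{-k\omega}$ yields the surjection $w_{-k\omega}\mapsto[w]$, and the dimension estimate follows from $\dim\mathsf{gr}^F\sEMloc_{-k\omega}^{\alpha_1,\dots,\alpha_k}=\dim\sEMloc_{-k\omega}^{\alpha_1,\dots,\alpha_k}=\prod_i\dim\sEMloc_{-\omega}^{\alpha_i}=4^k$ by Lemma~\ref{deformeddimension}.

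The relations of $\sEMloc_{-k\omega}$ from Definitions~\ref{globalEM} and~\ref{localEM} split into two groups. The first group, namely $fz^a\xi^b w=0$ for $a>0$, $h\xi w=0$, and $e^{k+1}w=0$, has already been verified to hold in the tensor product $\sEMloc_{-k\omega}^{\alpha_1,\dots,\alpha_k}$ itself in~\eqref{relationsevaluationmodule}, so they pass automatically to the associated graded. The $e^{k+1}$ relation uses the obvious fact that in any expansion of $e^{k+1}$ across $k$ tensor factors, some factor must be acted on by $e^m$ with $m\geq 2$, and $e^2 w_{-\omega}^{\alpha_i}=0$ in each $\sEMloc_{-\omega}^{\alpha_i}$.

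The nontrivial relations are those of the form $hz^j w_{-k\omega}=0$ for $j\geq 1$ coming from the localization quotient $\sEM_{-k\omega}\to\sEMloc_{-k\omega}$. Here one uses Corollary~\ref{freeactionfundamental} together with the defining relation $h(z-\alpha_i)w_{-\omega}^{\alpha_i}=0$ of Definition~\ref{deformedmodule} to compute, iteratively, that $hz^j w_{-\omega}^{\alpha_i}=(-\alpha_i)^j w_{-\omega}^{\alpha_i}$ inside each tensor factor. Summing over the $k$ tensor factors gives $hz^j w=\bigl(\sum_i(-\alpha_i)^j\bigr)w$ as elements of $\sEMloc_{-k\omega}^{\alpha_1,\dots,\alpha_k}$. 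The right-hand side lies in $F_0$, while the left-hand side lies in $F_j$, so the class of $hz^j w$ in $F_j/F_{j-1}$ vanishes for every $j\geq 1$. This is precisely the relation needed in the associated graded.

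The main obstacle in this argument is the bookkeeping in the third step: one must be careful that the equality $hz^j w=(\sum_i(-\alpha_i)^j)w$ is established as an identity in the \emph{filtered} (deformed) module before passing to the associated graded, since the monomial $hz^j w$ individually has $z$-filtration degree $j$ while the scalar multiple of $w$ has filtration degree $0$; the cancellation in $F_j/F_{j-1}$ only makes sense because these two sides are literally equal in $\sEMloc_{-k\omega}^{\alpha_1,\dots,\alpha_k}$. Apart from this, all remaining checks are straightforward applications of the relations of Definitions~\ref{globalEM}, \ref{localEM}, \ref{deformedmodule} tensor factor by tensor factor.
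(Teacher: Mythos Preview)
Your proof is correct and follows essentially the same approach as the paper's: verify that the image of $w$ in the associated graded satisfies the defining relations of $\sEMloc_{-k\omega}$, then invoke the universal property and count dimensions via Lemma~\ref{deformeddimension}. The only difference is cosmetic: for the relations $hz^j[w]=0$ with $j\geq 1$, the paper simply observes that the weight $-k\omega$, $\xi$-degree $0$ subspace of $\sEMloc_{-k\omega}^{\alpha_1,\dots,\alpha_k}$ is one-dimensional, so $hz^jw$ is automatically a scalar multiple of $w$ and hence dies in $F_j/F_{j-1}$; you instead compute the scalar explicitly factor by factor. Both are fine. One small remark: your reference to Corollary~\ref{freeactionfundamental} is not what is actually doing the work here---what you need is that the weight $-\omega$, $\xi$-degree $0$ part of each $\sEMloc_{-\omega}^{\alpha_i}$ is one-dimensional (which is Lemma~\ref{deformeddimension}), or equivalently the relation $hz^kw_{-\omega}=(hz)^kw_{-\omega}$ from \cite{CPweyl}, not freeness of the $\Bbbk[hz]$-action on the global module.
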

	\begin{proof}
		It is sufficient to show that the cyclic vector of $\mathsf{gr}^{F}\sEMloc_{-k\omega}^{\alpha_1, \dots, \alpha_k}$ satisfies the defining relations of
		$\sEMloc_{-k\omega}$.
		We know that elements $f z^a \xi^b$, $h \xi$ and $e^{2}$ annihilate the cyclic vectors in all tensor factors. Consequently, we have
		\begin{equation*}\label{relationsevaluationmodule2}
			f z^a \xi^b v=0; \quad
			h\xi v=0; \quad
			e^{k+1}v=0.
		\end{equation*}
		The subspace of $\sEMloc_{-k\omega}^{\alpha_1, \dots, \alpha_k}$ spanned by  vectors of $\xi$-degree zero and weight $-k \omega$ is one-dimensional. Therefore, 
		the element $hz^{k}$ annihilates the cyclic vector for any $k>0$ in the graded module 
		$\mathsf{gr}^{F}\sEMloc_{-k\omega}^{\alpha_1, \dots, \alpha_k}$.
	\end{proof}
	
	\begin{cor}
		$\dim \sEMloc_{(k+1)\omega} \geq 4^{k}$.
	\end{cor}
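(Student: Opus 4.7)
The plan is to deduce the bound directly from the previous Corollary by invoking Remark~\ref{pitwist}, which asserts that the \emph{global} Macdonald module $\sEM_{-k\omega}$ is the $\pi$-automorphism twist of $\sEM_{(k+1)\omega}$, where $\pi \in \Omega$ is the nontrivial diagram automorphism of the affine Dynkin diagram of $\widehat{\msl_2}$. Since the previous Corollary establishes $\dim \sEMloc_{-k\omega} \geq 4^{k}$ via the fusion construction of \S\ref{sec::fusion::sl2} and Proposition~\ref{prp::upper:bound:MM}, it suffices to show that the $\pi$-twist descends from global Macdonald modules to their local quotients, giving $\sEMloc_{(k+1)\omega} \cong \sEMloc_{-k\omega}$ (as vector spaces, up to a regrading).

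First, I would unpack Remark~\ref{pitwist} at the level of cyclic presentations. The automorphism $\pi$ permutes the Chevalley generators of $\widehat{\msl_2}$ by $e \leftrightarrow fz$ (equivalently, $e_1 \leftrightarrow e_0$ and $f_1 \leftrightarrow f_0$), negates the Cartan direction, and shifts weights by the nontrivial minuscule coset representative. Comparing the two presentations in Definition~\ref{globalEM}, one checks that $\pi$ carries the relation $(fz)^{k+1} w_{(k+1)\omega}=0$ to $e^{k+1}w_{-k\omega}=0$, the relation $ez^a\xi^b w_{(k+1)\omega}=0$ for $a\ge 0$ to $fz^a\xi^b w_{-k\omega}=0$ for $a>0$ (the shift of exponent range precisely absorbing the $\pi$-twist of the Iwahori), and $h\xi w_{(k+1)\omega}=0$ to $h\xi w_{-k\omega}=0$ up to sign. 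This identifies $\sEM_{(k+1)\omega}$ and $\sEM_{-k\omega}$ as $\pi$-twisted $\fI[\xi]$-modules.

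Second, I would observe that the right action of $\langle h\rangle[z]$ on the cyclic vector that defines the local quotient is preserved by $\pi$: indeed, $\pi$ fixes the loop parameter $z$ and only rescales $h$, so the subspace $\langle h\rangle[z]\cdot w_{-k\omega} \subset \sEM_{-k\omega}$ maps to $\langle h\rangle[z]\cdot w_{(k+1)\omega} \subset \sEM_{(k+1)\omega}$ under the $\pi$-twist. Consequently, the $\pi$-twist descends to an isomorphism $\sEMloc_{(k+1)\omega} \cong \sEMloc_{-k\omega}$ of the local Macdonald modules (as ungraded vector spaces), whence $\dim \sEMloc_{(k+1)\omega} = \dim \sEMloc_{-k\omega} \geq 4^{k}$ by the previous Corollary.

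The main obstacle is the bookkeeping in the first step: because $\pi$ is a diagram automorphism of $\widehat{\msl_2}$ rather than a Lie algebra automorphism of $\msl_2$, one has to track carefully how it acts on the Iwahori-level generators $e, h, fz, ez, hz,\ldots$ and on $\xi$, and verify that the defining relations of the two cyclic presentations really transform into one another (not merely up to some further relation that could reduce the dimension). Once this is pinned down, the compatibility with the local quotient is immediate since $\fh[z]$ is $\pi$-stable.
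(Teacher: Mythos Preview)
Your proposal is correct and follows exactly the paper's approach: the paper's proof is the single line ``This follows from the action of the diagram automorphism $\pi$,'' and you have simply unpacked this, checking that the $\pi$-twist of Remark~\ref{pitwist} descends from the global modules to the local quotients because $\langle h\rangle[z]$ is $\pi$-stable. One small correction: the inequality $\dim \sEMloc_{-k\omega}\ge 4^k$ you invoke is the ``In particular'' clause of Proposition~\ref{prp::upper:bound:MM}, not a previous Corollary.
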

	\begin{proof}
		This follows from the action of the diagram automorphism $\pi$.
	\end{proof}
	
	\subsection{Decomposition procedure}
	\label{sec::sl2::decompose}
	In this subsection, we construct four component filtration on the module $\sEMloc_{-k\omega}$ such that
	the corresponding subquotients are isomorphic to modules $\sEMloc_{-(k+1)\omega}$ and $\sEMloc_{k\omega}$ with shifted
	degree. We use this result to compute the supercharacters of these modules.
	
	Take the following four elements of the module $\sEMloc_{(-k+1)\omega}$:
	\begin{equation}\label{filtrationgenerators}
		v_1:=v;~ v_2:=e^{k+1} v_1;~ v_3:=h\xi v_2; ~ v_4:=(fz)^{k+1}v_3.
	\end{equation}
	We define the submodules generated by the aforementioned vectors:
	\begin{equation}\label{filtrationdecompopsition}
		\sDM_i:=U(\fI[\xi]) v_i \subset \sEMloc_{(-k+1)\omega}.
	\end{equation}
	
	We have the following sequence of inclusions of modules:
	\[\sDM_1 \supset \sDM_2 \supset \sDM_3 \supset \sDM_4\]
	and the subsequent Lemma explains the inductive description of the successive quotients
	\begin{prop}\label{filtr}
		The assignments $v_{\lambda}\mapsto v_i$ of the cyclic vectors extend to the following list of  epimorphisms	
		\begin{gather}
			\label{filtr1}
			\sX^{-\omega} \sEMloc_{-k\omega} \twoheadrightarrow \sDM_1/\sDM_2; \\
			\label{filtr2}
			\sX^{\omega} \sEMloc_{(k+1)\omega} \twoheadrightarrow \sDM_2/\sDM_3; \\
			\label{filtr3}
			\sX^{\omega}\sv \sEMloc_{(k+1)\omega} \twoheadrightarrow \sDM_3/\sDM_4; \\
			\label{filtr4}
			\sX^{-\omega}\sq^{k+1} \sEMloc_{-k\omega}\twoheadrightarrow \sDM_4;
		\end{gather}
	\end{prop}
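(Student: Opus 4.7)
The plan is to prove each of the four surjections by the universal property of the source cyclic module: for each $i \in \{1,2,3,4\}$, verify that $w_i \in \sDM_i$ satisfies the defining relations of the putative source module modulo $\sDM_{i+1}$ (with convention $\sDM_5 := 0$). First I record the weights and $(z,\xi)$-bidegrees: $w_1, w_2, w_3, w_4$ lie in weights $-(k+1)\omega, (k+1)\omega, (k+1)\omega, -(k+1)\omega$ and bidegrees $(0,0), (0,0), (0,1), (k+1,1)$, matching the shifts $\sX^{-\omega}$, $\sX^{\omega}$, $\sX^{\omega}\sv$, $\sX^{-\omega}\sq^{k+1}$ of the cyclic vectors of the putative source modules in \eqref{filtr1}--\eqref{filtr4}. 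Surjection \eqref{filtr1} is immediate: $w_1 = w$ is the cyclic vector of $\sEMloc_{-(k+1)\omega}$, so $fz^a\xi^b w_1 = 0$ ($a > 0$), $h\xi w_1 = 0$, and $hz^a w_1 = 0$ ($a \ge 0$) hold identically and coincide with the defining relations of $\sEMloc_{-k\omega}$, while the remaining relation $e^{k+1}w_1 = w_2$ lies in $\sDM_2$ by construction.

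The central technical input used for the remaining three surjections is the pair of super-commutator identities in $U(\fI[\xi])$
\begin{equation*}
h\xi \cdot e^n \;=\; e^n\, h\xi + 2n\,(e\xi)\,e^{n-1}, \qquad hz^a\xi \cdot e^n \;=\; e^n\, hz^a\xi + 2n\,(ez^a\xi)\,e^{n-1}
\end{equation*}
obtained from $[h,e]=2e$, $[e,e\xi]=0$, $[e, ez^a\xi]=0$, and derivation. Taking $n=k+1$ and using $h\xi w_1=0$ gives the key formula $w_3 = 2(k+1)(e\xi)e^k w_1$, whence $(e\xi) w_2 = \tfrac{1}{2(k+1)}\,e\cdot w_3 \in \sDM_3$. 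For \eqref{filtr2}, I first note $e\cdot w_2 = e^{k+2}w_1 = 0$. For $(ez^a)w_2$ with $a>0$, using $ez^a w_1 = \tfrac12\, hz^a \cdot ew_1$ (from $[hz^a,e]=2ez^a$ and the local relation $hz^a w_1=0$) together with $[e^{k+1}, hz^a] = -2(k+1)(ez^a)e^k$ one derives $(ez^a) w_2 = -(k+1)(ez^a) w_2$, forcing $(ez^a) w_2 = 0$ absolutely. The $(ez^a\xi) w_2$-relations reduce to $\sDM_3$ by an analogous but more delicate argument combining both commutator identities above with the formula for $w_3$, and the $(fz)^{k+1}$-relation modulo $\sDM_3$ is handled by expanding via $[fz,e]=-hz$ and repeatedly invoking $hz^a w_1 = 0$.

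For \eqref{filtr3}, the $\xi$-degree-one form of $w_3$ forces $h\xi\cdot w_3 = 0$ (as $(h\xi)^2=0$) and collapses the $(ez^a\xi)$-relations via $\xi^2=0$; the defining $(fz)^{k+1}$-relation is $(fz)^{k+1}w_3 = w_4 \in \sDM_4$ by construction, while the remaining relations follow from the same commutator manipulations. For \eqref{filtr4}, sitting at the bottom of the filtration, I must verify the defining relations of $\sEMloc_{-k\omega}$ hold absolutely on $w_4$; these reduce to known vanishings on $w_1$ after commuting through $(fz)^{k+1}$ and $h\xi$, with $e^{k+1}w_4 = 0$ following from $e^{k+2}w_1 = 0$ via iterated application of $[e,fz] = -hz$ and the $hz^a$-vanishing. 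The principal obstacle is the $(ez^a\xi)w_2$-reduction in step \eqref{filtr2}: it demands a careful super-PBW rearrangement expressing $(hz^a\xi)w_2$ modulo $\sDM_3$ in terms of actions on $w_3$. Once this bookkeeping is in place, the remaining verifications are routine extensions of the computations in Proposition \ref{firstmodule}, and (as remarked after the Proposition) comparison with the dimension lower bound from Proposition \ref{prp::upper:bound:MM} forces all four surjections to be isomorphisms.
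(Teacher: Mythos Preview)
Your overall plan—verify the defining relations of each target module on the cyclic vector $w_i$ modulo $\sDM_{i+1}$—is exactly the paper's approach. The weight and bidegree bookkeeping, the derivation of $(ez^a)w_2=0$ from $hz^aw_1=0$, and the formula $w_3=2(k+1)(e\xi)e^kw_1$ are all correct. However, there are genuine gaps that prevent the argument from closing as written.

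First, you never verify the local relation $hz^aw_2\in\sDM_3$ for $a\ge 1$; this is part of the presentation of $\sEMloc_{(k+1)\omega}$ and cannot be omitted. Second, and more seriously, the strategy of proving only $(ez^a\xi)w_2\in\sDM_3$ (rather than $=0$) does not propagate to step~\eqref{filtr3}. For instance, to get $hz^aw_3\in\sDM_4$ you would compute $hz^aw_3=(h\xi)(hz^a)w_2$; if $(hz^a)w_2$ is merely in $\sDM_3$ you land in $(h\xi)\sDM_3\subset\sDM_3$, not $\sDM_4$. Similarly, your ``$\xi^2=0$'' argument for $(ez^a\xi)w_3$ works only for $a=0$ (where it yields $(e\xi)^2=0$); for $a>0$ one obtains $(ez^a\xi)w_3=-(h\xi)(ez^a\xi)w_2$, which again requires $(ez^a\xi)w_2=0$ absolutely.

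The paper closes these gaps by invoking the classical (non-super) theory of local Weyl modules for $\fI$ as a black box: it imports the relations $hz^lw_2=0$, $(fz)^{k+1}w_2=0$, $ez^lw_2=0$, and crucially $e^k(ez^l)w=0$ for $l>0$ directly from that theory, and uses the last of these to prove $(ez^l\xi)w_2=0$ \emph{absolutely} via the identity $ez^l\xi=\tfrac12[hz^l,e\xi]$. With these absolute vanishings in hand, the $w_3$ relations follow immediately by commuting through $h\xi$. For $w_4$, rather than commuting through $(fz)^{k+1}$ as you propose, the paper observes that $w_4$ has the same $\fh$-weight as $w_1$, so by PBW for $\fI[\xi]$ (all $fz^a\xi^b\in\fI[\xi]$ have $a\ge1$ and kill $w_1$) one has $w_4\in U(\langle h\rangle[z,\xi])w_1$; the relations $fz^a\xi^bw_4=0$, $hz^aw_4=0$, $h\xi w_4=0$ then follow trivially from the (anti)commutativity of $\langle h\rangle[z,\xi]$.
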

	\begin{proof}
		Note that all successive quotients $\sDM_i/\sDM_{i+1}$ are cyclic modules by construction and the grading shifts are adapted to have a cyclic vector of the same weight and $(z,\xi)$-degree in the corresponding nonsymmetric Macdonald module.
		So to have well-defined morphisms~\eqref{filtr1}-\eqref{filtr4} one has to check the defining relations of the Macdonald module.
		\begin{itemize}
			\item
			The relations for $v_1$ follow directly from the defining relations of $\sEMloc_{(-k)\omega}$. 	
			\item 	
			From the theory of Weyl modules we have:
			\[hz^l v_2=0,~l >0;~(fz)^{k+1}v_2=0;~ ez^l v_2=0,~l \geq 0.\]
			Moreover from the theory of Weyl modules for $l>0$ we have:
			$e^k ez^l w=0$.
			Now we get:
			\[e\xi v_2= e\xi\cdot e^{k+1}\cdot v=\frac{1}{2(k+2)}h_\xi e^{k+2}\cdot v=0\]
			and for $l>0$
			\[ez^l\xi v_2= ez^l\xi \cdot e^{k+1} w=\frac{1}{2}( hz^l\cdot e\xi\cdot e^{k+1}- 2(k+1)e\xi\cdot e^k\cdot ez^l )v=0.\]
			Moreover, by definition, we have:
			\(h\xi v_2 \in \sDM_3\)
			what implies the map~\eqref{filtr2}.
			\item
			We have $[hz^l, h\xi]=[ez^l\xi, h\xi]=0$.
			Therefore we get for $l>0$:
			\begin{gather*}
				hz^l v_3= hz^l \cdot h\xi v_2=h\xi \cdot hz^l v_2=0;
				\\			
				ez^l\xi v_3= ez^l\xi \cdot h\xi v_2=h\xi \cdot ez^l\xi v_2=0.
				\\
				h\xi v_3=h\xi \cdot h\xi v_2=0,
			\end{gather*}
			because $h\xi$ is odd.
			Note that $v_3=\frac{1}{2(k+1)}e^k \cdot e\xi v$. Hence:
			\(ez^lv_3=0,\)
			and, by definition, we have:
			\[(fz)^{k+1}v_3 \in \sDM_4.\]
			
			\item 	
			PBW theorem implies that $v_4 \in U(\langle h \rangle[z,\xi])v$. Therefore we obtain that the elements $ez^l\xi^{l'}$, $l \geq 0$
			$hz^l$, $l>0$ and $h \xi$ annihilate this element.
			Consequently, we have:
			\[e^{k+1}v_4=e^{k+1}(fz)^{k+1}v_3\in \Bbbk[hz, hz^2, \dots]v_3.\]			
			Thus $e^{k+1}v_4=0$ because of relations satisfied by the element $v_3$.
		\end{itemize}
		
	\end{proof}	
	The diagrammatic automorphism $\pi$ allows one to define the $4$-term  filtration of the nonsymmetric local Macdonald module $\sEMloc_{(k+2)\omega}$ with surjections of smaller Macdonald modules on the successive quotients:
	\begin{gather*}
		\sEMloc_{(k+1)\omega}\twoheadrightarrow \sDM_1/\sDM_2; \\
		\sX^{-\omega}\sq^{k+1}\sEMloc_{-k\omega} \twoheadrightarrow \sDM_2/\sDM_3; \\
		\sX^{-\omega}\sq^{k+1}\sv \sEMloc_{-k\omega}\twoheadrightarrow \sDM_3/\sDM_4; \\
		\sX^{\omega}\sq^{k+1}\sv\sEMloc_{(k+1)\omega} \twoheadrightarrow \sDM_4. \\
	\end{gather*}
	
	\begin{thm}\label{character}
		The character of the nonsymmetric Macdonald module $\sEMloc_{k\omega}$ is equal to the integral form of the nonsymmetric Macdonald polynomial
		$\widetilde E_{k\omega} (x, q, t).$
		In particular, 	
		$$\dim \sEMloc_{-k\omega}=\dim \sEMloc_{(k+1)\omega}=4^k$$
		and  all surjections $\sEMloc_{\cdot}\twoheadrightarrow \sDM_i/\sDM_{i+1}$ are isomorphisms.
	\end{thm}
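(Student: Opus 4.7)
The plan is to induct on $k \ge 0$, sandwiching the dimension of $\sEMloc_{-(k+1)\omega}$ between the upper bound coming from the four-step filtration of Proposition \ref{filtr} and the lower bound coming from the fusion-product construction of Proposition \ref{prp::upper:bound:MM}; the diagram automorphism $\pi$ of Remark \ref{pitwist} then transfers each statement to $\sEMloc_{(k+2)\omega}$. For the base case $k=0$ I would check by direct inspection that $\sEMloc_{0}$ and $\sEMloc_{\omega}$ are each one-dimensional: since $\fI$ does not contain $f$, the defining relations leave only the Cartan action on the cyclic vector, giving dimension $1 = 4^0$. The case $k=1$ is handled by Proposition \ref{firstmodule} together with its $\pi$-twist, which computes both $\sEMloc_{-\omega}$ and $\sEMloc_{2\omega}$.

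For the inductive step, assume the theorem for $k$. Each of the four successive quotients of the filtration in Proposition \ref{filtr} is a surjective image of a graded shift of either $\sEMloc_{-k\omega}$ or $\sEMloc_{(k+1)\omega}$, so by induction each has dimension at most $4^k$. Summing gives $\dim \sEMloc_{-(k+1)\omega} \le 4\cdot 4^k = 4^{k+1}$, while Proposition \ref{prp::upper:bound:MM} supplies the matching lower bound $\dim \sEMloc_{-(k+1)\omega} \ge 4^{k+1}$. Forced equality shows that each of the four surjections of Proposition \ref{filtr} is in fact an isomorphism, at which point the $(z,\xi)$-supercharacter of $\sEMloc_{-(k+1)\omega}$ can be read off from the filtration as
\[
\gch(\sEMloc_{-(k+1)\omega}) = (1 + q^{k+1})\, x^{-1}\, \widetilde E_{-k\omega}(x,q,t) + (1 - t)\, x\, \widetilde E_{(k+1)\omega}(x,q,t),
\]
after invoking the inductive hypothesis on the inner factors. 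The $\pi$-twist then promotes this to the corresponding statement for $\sEMloc_{(k+2)\omega}$, closing the induction.

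The main obstacle I anticipate is not the dimension sandwich (which is essentially automatic once Propositions \ref{filtr} and \ref{prp::upper:bound:MM} are in hand) but rather the final character identification: one must recognise the right-hand side of the displayed recursion as $\widetilde E_{-(k+1)\omega}$. I would approach this in one of two ways: either by citing the standard Cherednik/Knop--Sahi intertwiner recursion for integral-form $A_1$ nonsymmetric Macdonald polynomials, or---more conceptually and self-containedly---by appealing to the uniqueness part of Theorem \ref{thm::Y_eigen}, verifying that the right-hand side is a $Y^{\omega}$-eigenvector with eigenvalue $q^{k+1}$ and leading monomial $x^{-(k+1)}$ of coefficient $1$, a characterisation that pins down $\widetilde E_{-(k+1)\omega}$ uniquely.
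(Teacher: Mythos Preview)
Your proposal is essentially the paper's own argument: sandwich the dimension using Proposition~\ref{filtr} (upper bound) and Proposition~\ref{prp::upper:bound:MM} (lower bound), deduce that the four surjections are isomorphisms, and then identify the resulting character recursion with a known recursion for the integral nonsymmetric Macdonald polynomials. The paper dispatches the last step by citing the Ram--Yip recursion~\cite{RY}, which is exactly your first suggested route.

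Two small remarks. First, check the coefficient in your displayed recursion: the vector $w_4=(fz)^{k+1}h\xi\, w_2$ carries $\xi$-degree $1$, so the bottom piece of the filtration should contribute a factor $-t\,q^{k+1}$ in the supercharacter (the paper's statement of \eqref{filtr4} appears to omit a $\sv$, compare the $\pi$-twisted list immediately after it). Second, your ``more conceptual'' alternative---invoking Theorem~\ref{thm::Y_eigen} for uniqueness---does not quite work as stated: that theorem has no uniqueness clause, and the $\msl_2$ eigenvalue computation you would need is Theorem~\ref{thm::Y_eigen::sl2}, whose proof in the paper already uses Theorem~\ref{character}. If you want to avoid citing~\cite{RY} you would have to verify the eigenvector property directly at the level of the polynomial recursion, independently of the module-theoretic results that come after.
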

	\begin{proof}
		The inductive description of $\sEMloc_{-k\omega}$ explained in Proposition~\ref{filtr} implies the upper bound $\dim \sEMloc_{-k\omega} \leq 4 \dim \sEMloc_{(1-k)\omega}$. Together with the base of induction $\dim \sEMloc_{\omega} = \dim EM_{0} = 1$ we have
		\begin{equation}
			\label{eq::dim::EM}
			\dim \sEMloc_{-k\omega}=\dim \sEMloc_{(k+1)\omega}\leq 4^k.
		\end{equation}
		The lower bound discovered in Proposition~\ref{prp::upper:bound:MM} implies the equality in~\eqref{eq::dim::EM}. Moreover, all successive quotients $\sDM_i/\sDM_{i+1}$ have to be isomorphic to the appropriate shift of the smaller nonsymmetric Macdonald module.
		The corresponding recursive formula for the characters of Macdonald modules that follows from the decomposition~\eqref{filtr1}-\eqref{filtr4} coincides with the well-known recurrent formulas outlined in~\cite{RY}.
	\end{proof}

	\subsection{Eigenvalues}
	\label{sec::sl2::Y}
	
	The goal of this subsection is to prove that the modules $\sEMloc_{m\omega}$ are eigen objects for the endofunctor $\sY^{\omega}:=\pi \sT_1$ and its inverse $\sY^{-\omega}:= \sT_1'\pi$ in the categorical sense:
	\begin{thm}
		\label{thm::Y_eigen::sl2}	
		There are isomorphisms
		\begin{gather}
			\sY^\omega (\sEMloc_{m\omega})\simeq
			\sq^{-m}\sEMloc_{m\omega};  \text{ if } m > 0\\
			\sY^{-\omega} (\sEMloc_{m\omega})\simeq	\sv^{-1}\sq^{m}  \sEMloc_{m\omega}[-1];  \text{ if } m \leq 0.
		\end{gather}
	\end{thm}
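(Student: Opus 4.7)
The plan is to verify the eigenvalue isomorphisms by a direct computation combining the explicit cyclic module structure of $\sEMloc_{m\omega}$ from Section~\ref{sec::sl2::MM} with the categorification framework of Section~\ref{sec::DAHA::Macdonald}, specialised to $\widehat{\msl_2}$. The two isomorphisms mirror each other under the substitution $\sT_1 \leftrightarrow \sT_1'$ and Serre duality (Proposition~\ref{prp::Serre}), so it suffices to do one case carefully and then transport.

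For the dominant case $m > 0$, the cyclic vector $w = w_{m\omega}$ of $\sEMloc_{m\omega}$ has $\alpha_1$-weight $m > 0$ and is annihilated by $e$, $ez^a$ and $(fz)^m$. A direct $\msl_2^{\alpha_1}$-weight decomposition shows that each $\alpha_1$-weight string of $\sEMloc_{m\omega}$ is of the form $V_{\lambda,\mu}$ with $\lambda + \mu \geq 0$, so $\sEMloc_{m\omega}$ is $\LInd_1$-acyclic by Lemma~\ref{lm::LInd::sl2} and $\sT_1(\sEMloc_{m\omega})$ is concentrated in homological degree zero. Using Example~\ref{ex::sl_2::T_i} together with the tensor-product compatibility of Corollary~\ref{cor::sl2::tensor}, I would identify $\sT_1(\sEMloc_{m\omega})$ with the cokernel of the unit map $\sEMloc_{m\omega} \to \sD_1(\sEMloc_{m\omega})$ and describe it as a cyclic $\fb[\xi]$-module, whose cyclic vector is the image of $(fz)^{m-1}f\,w$ at weight $-m\omega$ with the appropriate $z$- and $\xi$-shift. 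I would then realize the diagram automorphism $\pi$ as an auto-equivalence of $\bD_{c}^{-}(\cO(\fI[\xi]))$ using the extended Weyl group relation $t_{\omega^\vee} = \pi s_1$, so that $\pi$ permutes the simple root vectors $e \leftrightarrow fz$ and reindexes the $z$-grading by $\sq^{-\langle \omega^\vee, \,\cdot\,\rangle}$. Transporting the cyclic vector of $\sT_1(\sEMloc_{m\omega})$ under $\pi$ sends it back to weight $m\omega$, shifts the $z$-grading by $-m$, and matches the defining relations of $\sEMloc_{m\omega}$ from Definition~\ref{globalEM} --- the relation $e z^a \xi^b w = 0$ is mapped, after $\pi$-transport and $\sq$-renormalization, to $(fz)^{a+1}\xi^b w = 0$, and the relation $(fz)^m w = 0$ to $e^m w = 0$, compatibly with Definition~\ref{globalEM}. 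This yields a morphism $\pi \sT_1(\sEMloc_{m\omega}) \to \sq^{-m}\sEMloc_{m\omega}$ whose target and source have equal supercharacter by the character formula of Theorem~\ref{character} applied to both sides, forcing it to be an isomorphism.

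For $m \leq 0$ the cyclic vector is $\alpha_1$-anti-dominant, so the analogous computation uses $\sT_1'$ in place of $\sT_1$ and the dual description of $\sD_1'$ from Example~\ref{ex::sl_2::T_i}. Serre duality $\sD_i' \simeq \sv[1] \sD_i^{-1}$ in the form of Proposition~\ref{prp::Serre} is exactly what produces the extra $\sv^{-1}$ factor appearing in the statement. The main obstacle will be making the diagram automorphism $\pi$ precise at the level of the abelian category $\cO(\fI[\xi])$, since $\pi$ does not preserve the Iwahori subalgebra $\fI$ itself but only the ambient $\widehat{\msl_2}$; one must realize $\pi$ as an explicit equivalence involving both a twist of the action and a reindexing of the $z$-grading, and then carefully track this reindexing through the $\sT_1$-cone construction. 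A useful sanity check is the base case $m = 1$, where $\sEMloc_{\omega}$ is one-dimensional and both sides of the asserted isomorphism can be computed directly from the shift relation \eqref{eq::shift2}.
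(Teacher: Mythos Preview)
Your overall strategy matches the paper's: compute $\sT_1(\sEMloc_{m\omega})$ as a cyclic module concentrated in degree zero, apply $\pi$, and match the defining relations of $\sEMloc_{m\omega}$ up to a $\sq$-shift. The paper carries this out via Lemma~\ref{IndtoMacdonald}, which gives an explicit cyclic presentation of $\Ind_1(\sEMloc_{m\omega})$ in terms of the generator $w_{-m\omega}$, and then recognises the resulting quotient using the four-term filtration $\sDM_1\supset\cdots\supset\sDM_4$ of Section~\ref{sec::sl2::decompose} (specifically the identification with $\sDM_2/\sDM_3$ via \eqref{filtr2}). You bypass that filtration and argue more directly; this is fine but you are then reproving the same cyclic identification by hand.

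The one genuine gap is your treatment of $H^{-1}$. You claim $\sEMloc_{m\omega}$ is $\LInd_1$-acyclic because ``each $\alpha_1$-weight string is $V_{\lambda,\mu}$ with $\lambda+\mu\ge 0$'', invoking Lemma~\ref{lm::LInd::sl2}. But that lemma concerns the \emph{even} induction $\LInd_i^{\bar{0}}$; the full super-induction $\LInd_1=\LInd_1^{\bar{0}}\circ\Ind_1^{\bar{1}}$ first tensors with the odd element $f\xi$, which shifts weights down by $2$, so the relevant $\fb_1$-strings are those of $\Ind_1^{\bar{1}}(\sEMloc_{m\omega})$, not of $\sEMloc_{m\omega}$ itself (compare the proof of Lemma~\ref{lem::D::acyclic}, where the sufficient bound becomes $\lambda+\mu\ge 4$). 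Already for $m=2$ the module $\sEMloc_{2\omega}$ has a string $V_{0,0}$, so after the odd step you will see strings with $\lambda+\mu<0$ and your criterion does not apply directly. The acyclicity may well still hold, but it is not a one-line consequence of Lemma~\ref{lm::LInd::sl2}. The paper sidesteps this entirely: it only computes $H^0(\sT_1\sEMloc_{m\omega})$, identifies its supercharacter with $q^{-m}\widetilde E_{m\omega}$ via the $\pi$-twist and Theorem~\ref{character}, and then observes that this already equals $\gch(\sY^\omega\sEMloc_{m\omega})$ because $\hY^\omega$ acts on $\widetilde E_{m\omega}$ with eigenvalue $q^{-m}$ by the categorification Theorem~\ref{thm::DAHA}. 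The equality of Euler characteristics then forces $H^{-1}=0$. This character argument is both shorter and avoids the delicate weight bookkeeping; I would recommend adopting it in place of your acyclicity claim.

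For $m\le 0$ your plan via $\sT_1'$ and Serre duality is equivalent to what the paper does (it phrases things through $\sD_1$ and the identification with $\sDM_3\sv[1]$, which amounts to the same computation after Proposition~\ref{prp::Serre}).
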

	
	\begin{lem}\label{IndtoMacdonald}
		For $m>0$
		the module $\Ind_1 (\sEMloc_{m\omega}) = H^{0}(\LInd_{1}(\sEMloc_{m\omega}))$ is the cyclic module with the generator $v_{-m\omega}$ yielding the following	relations:
		\begin{equation}\label{indRelations}
			f z^a \xi^b v_{-m \omega}=0, a \geq 0, b ={0,1};~
			(e)^{m+1}v_{-m \omega}=0;~ h\xi v_{-m \omega}=0.
		\end{equation}
	\end{lem}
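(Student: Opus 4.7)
The plan is to identify the cyclic vector $w_{-m\omega}$ inside $\Ind_1(\sEMloc_{m\omega})$ with $f^m w_{m\omega}$, verify that the displayed relations hold there, and then obtain the presentation from the universal property of induction.

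First, the unit of adjunction makes the image of $w_{m\omega}$ generate $\Ind_1(\sEMloc_{m\omega})$ as a $\fp_1[\xi]$-module. Because $m\omega$ is $\alpha_1$-dominant, the $\msl_2$-orbit of $w_{m\omega}$ in the integrable quotient is $(m+1)$-dimensional with basis $\{f^k w_{m\omega}\}_{k=0}^{m}$; in particular $w_{-m\omega} := f^m w_{m\omega}$ is nonzero, and since $w_{m\omega}$ is a nonzero scalar multiple of $e^m w_{-m\omega}$, the latter also $\fp_1[\xi]$-generates the module.

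The displayed relations on $w_{-m\omega}$ can then be checked as follows. The identities $f w_{-m\omega} = f^{m+1} w_{m\omega} = 0$ and $e^{m+1} w_{-m\omega} = 0$ are immediate from $\msl_2$-integrability. For indices $(a,b)$ with either $a \geq 1$ or $b = 1$ the element $fz^a\xi^b$ supercommutes with $f$, so
\[
fz^a\xi^b \, w_{-m\omega} = f^m(fz^a\xi^b w_{m\omega}),
\]
where the inner vector has $\fh$-weight $(m-2)\omega$; applying $f^m$ would force weight $-(m+2)\omega$, outside the weight support of $\Ind_1(\sEMloc_{m\omega})$ (which is bounded above by $m\omega$ and, by $\msl_2$-integrability, symmetrically bounded below by $-m\omega$). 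For $h\xi w_{-m\omega}$, the formula $[h\xi, f^m] = -2m\, f^{m-1}(f\xi)$ (from $[h\xi, f] = -2 f\xi$ and $[f\xi, f] = 0$) together with $h\xi w_{m\omega} = 0$ yields
\[
h\xi\, w_{-m\omega} = -2m\, f^{m-1}(f\xi\, w_{m\omega});
\]
the vector $f\xi w_{m\omega}$ of weight $(m-2)\omega$ is $\alpha_1$-primitive in the integrable quotient, since $e(f\xi w_{m\omega}) = h\xi w_{m\omega} = 0$, so it generates an irreducible $\msl_2$-module of dimension $m-1$, on which $f^{m-1}$ acts as zero.

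These two steps produce a surjective $\fp_1[\xi]$-morphism from the cyclic module $N$ presented by the listed relations onto $\Ind_1(\sEMloc_{m\omega})$. To show it is an isomorphism, I will exhibit an $\fI[\xi]$-morphism $\sEMloc_{m\omega} \to \Res_1(N)$ sending $w_{m\omega}$ to a nonzero scalar multiple of $e^m w_{-m\omega}$; its adjoint under the induction-restriction adjunction will invert the surjection. The verification that $e^m w_{-m\omega}$ satisfies the defining relations of $\sEMloc_{m\omega}$ is dual to the analysis above, using the same super-commutator identities in $\fp_1[\xi]$ together with the two-sided weight bound $\{k\omega \mid -m \leq k \leq m\}$ on $N$ (itself a consequence of $\msl_2$-integrability and $f w_{-m\omega} = 0$). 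This final verification is the main technical obstacle: one must translate the relations imposed on the lowest-weight generator $w_{-m\omega}$ into the symmetric set of relations on the highest-weight vector $e^m w_{-m\omega}$, which requires careful bookkeeping of how negative-root relations in $\fI[\xi]$ transform under the $\msl_2$-raising operator, but no tool beyond those above.
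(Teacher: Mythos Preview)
Your verification of the relations on $w_{-m\omega}=f^{m}w_{m\omega}$ inside $\Ind_1(\sEMloc_{m\omega})$ is correct: there you may legitimately use the symmetric weight support, because $\Ind_1$ lands in the $\msl_2$-integrable category by definition. The gap is in the converse direction. To run the $(\Ind_1,\Res_1)$-adjunction and obtain a map $\Ind_1(\sEMloc_{m\omega})\to N$ you need $N\in\cO(\fp_1[\xi])$, i.e.\ $N$ itself must be $\msl_2$-integrable. You invoke exactly this when you write that the two-sided weight bound on $N$ is ``itself a consequence of $\msl_2$-integrability'', but integrability of the abstractly presented module $N$ is precisely what remains to be shown; without it the adjunction only produces a map from the ordinary (non-integrable) induction, and the resulting sandwich $\text{ord-Ind}\twoheadrightarrow N\twoheadrightarrow \Ind_1$ does not force the second arrow to be an isomorphism.

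The paper closes this gap differently and more economically. It presents $\Ind_1(\sEMloc_{m\omega})$ first by relations on the \emph{highest}-weight generator $w_{m\omega}$ (immediate from the definition of $\sEMloc_{m\omega}$ together with the integrability relation $f^{m+1}w_{m\omega}=0$), and asserts as a ``straightforward check'' that this presented $\msl_2[z,\xi]$-module is finite-dimensional, hence already integrable. The passage to the presentation on $w_{-m\omega}$ is then a single application of the simple reflection $s_1$, viewed as a Lie-algebra automorphism of $\fp_1[\xi]=\msl_2[z,\xi]$: it preserves the module, sends $w_{m\omega}$ to $w_{-m\omega}$, and carries one list of relations to the other. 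Your commutator and weight-support computations are in effect reproving this $s_1$-symmetry by hand; the missing ingredient in your argument is the same finite-dimensionality/integrability check that the paper isolates (and states, though does not spell out) as the essential step.
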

	\begin{proof}
		By definition we have that the $\msl_2[z, \xi]$ induced module $U(\msl_2[z,\xi])\otimes_{U(\fI[z,\xi])} \sEMloc_{m\omega}$ is the cyclic module generated by the element $v_{m \omega}$ satisfying the following relations:
		\begin{equation}
			\label{eq::rel::D1::EM}
			e z^a \xi^b v_{m \omega}=0, a \geq 0, b ={0,1};~
			(f)^{m+1}v_{m \omega}=0;~ h\xi v_{m \omega}=0.
		\end{equation}
		The straightforward check shows that the $\msl_2[z, \xi]$-module, yielding Relations~\eqref{eq::rel::D1::EM} is finite-dimensional and, therefore, is integrable.
		Thus, the module defined by Relations~\eqref{eq::rel::D1::EM} is isomorphic to $\Ind_1 (\sEMloc_{m\omega})$.
		The Weyl group automorphism $s_1$ keeps the module $\Ind_1(\sEMloc_{m\omega})$,  $s_1(v_{m\omega}) = v_{-m\omega}$ and $s_1$ maps Relations~\eqref{eq::rel::D1::EM} to Relations~\eqref{indRelations}.
	\end{proof}
	\begin{proof}[(proof of Theorem~\ref{thm::Y_eigen::sl2})]
		Suppose that $m>0$.	
		The presentation~\eqref{indRelations} implies the isomorphism of $\fI[\xi]$-modules:
		$$\Res_1\circ\Ind_1(\sEMloc_{\omega}) \simeq {\sDM}_2/{\sDM}_3 \stackrel{\eqref{filtr2}}{\simeq} \sX^{-\omega}\sq^{m}\sEMloc_{(-m+1)\omega}$$
		Consequently, we have:
		\[H^0(\sT_1\sEMloc_{m\omega})=H^0(\cone (\sEMloc_{m\omega} \rightarrow \sD_1 (\sEMloc_{m\omega})))\]
		and is isomorphic to the quotient $X^{-\omega}\sEMloc_{(-m+1)\omega}$.
		The direct comparison of relations explains the following isomorphism:
		\[\pi X^{-\omega}\sEMloc_{(-m+1)\omega} \simeq q^{-m}\sEMloc_{m\omega}.\]
		The functor $\sT_1$ has only $0$ and $-1$'st nontrivial cohomology and 	
		it remains to show that $-1$'st cohomologies of $\sT_1\sEMloc_{m\omega}$ vanishes.
		The latter follows from the equality of characters that makes sense due to the main categorification Theorem~\ref{thm::DAHA}:	
		\begin{multline*}
			\gch(H^{0}(\pi\sT_1 (\sEMloc_{m\omega}))) = \gch(\sq^{-m}\sEMloc_{m\omega}) = q^{-m}\widetilde E_{m\omega}= \\ = Y^{\omega}(\widetilde E_{m\omega}) = \gch(\sY^{\omega}\sEMloc_{m\omega}) =
			\gch(\pi \sT_1 \sEMloc_{m\omega})
		\end{multline*}
		Here, as before, $\widetilde E_{m\omega} = \widetilde E_{m\omega}(x, q, t)$ denotes the integral form of  the nonsymmetric Macdonald polynomial $E_{m}$. $\widetilde{E_{m}}$ differs from $E_m$ by a certain polynomial depending on $q,t$ such that $\widetilde{E_{m}}$ is a polynomial in $q$,$t$ with integral coefficient and, therefore, is an eigenvector of the endofunctor $Y^{\omega}$.
		
		The proof for $m\leq 0$ is completely analogous and is based on the isomorphism:
		$$
		\Res_1 \circ \Ind_1 (\sEMloc_{-k\omega}) = {\sDM_3}\sv[1].$$
	\end{proof}
	
	\begin{cor}\label{extwanishing}
		For all $\lambda\neq\mu$ we have the vanishing of the cohomology in the derived category $\bD_{c}^{-}(\cO(\fI[\xi]))$:
		$$Ext_{\cO(\fI[\xi])}^{\udot}(\sEMloc_{\lambda},\sEMloc_{\mu})=0.$$
	\end{cor}
	\begin{proof}
		Suppose that a $z$-graded vector space $V=\oplus_{n\in\bZ} V_n$ is isomorphic to itself with a shifted grading. I.e. $V\simeq \sq^{m} V$ for $m\neq 0$. Then whenever $V\neq 0$ $\dim V=\infty$ because $V_{n}\simeq V_{n+m} \simeq V_{n+2m} \simeq \ldots$.  
		However, we know that the finite-dimensional modules  $\sEMloc_{\lambda}$ are eigen objects with respect to the action of endofunctor $\sY^{\omega}$. Therefore, we have the Ext-periodicity property discovered in Corollary~\ref{cor::Ext::periodicity}.  In particular, this implies an isomorphism for different $\lambda\neq\nu \geq 0$: 
		$$
		Ext_{\fI[\xi]}(\sEMloc_{\lambda},\sEMloc_{\nu}) \simeq \sq^{m}
		Ext_{\fI[\xi]}(\sEMloc_{\lambda},\sEMloc_{\nu}).
		$$
		Here $m:=\langle\omega,\lambda-\mu\rangle$ is a different from zero integer number. Each $Ext^p$ is a finite-dimensional $z$-graded vector space what implies that it is zero. The case of negative $\lambda$ or $\mu$ is covered by the same arguments and uses also a homological shift.
	\end{proof}


\begin{thebibliography}{99}
		
		\bibitem[AL]{Anno}
		R.~Anno,  T.~Logvinenko,
		\newblock{\it Spherical DG-functors.}
		\newblock
		The Journal of the European Mathematical Society 49.9 (2017), 2577-2656.
		
		\bibitem[AK]{Arkhipov}
		S.~Arkhipov, T.~Kanstrup.
		\newblock {\it Demazure descent and representations of reductive groups}.
		\newblock
		Algebra 2014.1 (2014): 823467.
		
		\bibitem[BR]{BR13} R.~Bezrukavnikov, S.~Riche, 
		\newblock{\it Affine braid group actions on derived categories of Springer resolutions.}
		\newblock
		Ann. Sci. \'Ec. Norm. Sup\'er. (4) 45 (2012), 535-599.
		
		\bibitem[CP]{CPweyl} V.~Chari and A.~Pressley, 
		\newblock{\it Weyl modules for classical and quantum affine algebras},
		\newblock
		Represent. Theory \textbf{5} (2001), 191-223 (electronic).
		
		
		\bibitem [Ch1] {Ch1}
		{I.~Cherednik},
		\newblock
		{\it Double affine Hecke algebras and Macdonald's conjectures},
		\newblock
		Ann. of Math. (2) 141 (1995), 191-216.
		
		\bibitem[Ch2]{Ch}
		I.~Cherednik,.
		\newblock
		{\it Double affine Hecke algebras,}
		\newblock
		(Vol. 319) (2005). Cambridge University Press.
		
		\bibitem[F]{Fe1} B.~Feigin;
		\newblock
		{\it On the cohomology of the Lie algebra of vector fields and of the current algebra,}
		\newblock
		Sel.\,Math.\,Sov. 7 (1988), 49-62.
		
		
		\bibitem[FL]{FL} B.~Feigin, S.~Loktev;
		\newblock
		{\it Multi-dimensional Weyl modules and symmetric functions,}
		\newblock
		Comm.\,Math.\,Phys 251(3) (2004), 427-445.
		
		\bibitem[FM]{FM}
		E.~Feigin, Ie.~Makedonskyi,
		\newblock
		{\it Generalized Weyl modules, alcove paths and Macdonald polynomials,}
		\newblock Sel.\,Math.\,  23(4) (2017), 2863-2897.
		
		
		\bibitem[FMO]{FMO}
		E.~Feigin, Ie.~Makedonskyi,  D.\,Orr.
		\newblock
		{\it Generalized Weyl modules and nonsymmetric $q$-Whittaker functions,}
		\newblock
		Advances in Mathematics 330 (2018), 997-1033.
		
		\bibitem[FGT]{FGT}
		S.~Fishel, I.~Grojnowski, C.~Teleman; 
		\newblock{\it The strong Macdonald conjecture and Hodge theory on the loop Grassmannian,}
		Ann. of Math. (2), 168 (1) (2008), 175-220.
		
		\bibitem[Jos]{Joseph}
		A.~Joseph,
		\newblock{\it On the Demazure character formula,}
		Ann. Sci. Ecole Norm. Sup. (4) 18(3) (1985), 389-419.
		
		\bibitem[Kh]{Khor::Weyl}
		A.~Khoroshkin, 
		\newblock
		{\it Highest weight categories and Macdonald polynomials,}
		\newblock
		\eprint{1312.7053}(2013).
		
		\bibitem[Ki]{Kirillov_AHA}
		A.~Kirillov Jr,
		\newblock {\it Lectures on affine Hecke algebras and Macdonald's conjectures,}
		\newblock
		Bulletin of the American Mathematical Society, 34(3) (1997), 251-292.
		
		\bibitem[KV]{Knapp_Vogan}
		A. W.~Knapp,  D. A.~ Vogan Jr,
		\newblock {\it Derived induction and Unitary Representations,}
		\newblock (PMS-45) (Vol.45)(2016), Princeton University Press.
		
		\bibitem[Ko]{Kostant}
		B.~Kostant,
		\newblock {\it Lie algebra cohomology and the generalized Borel-Weil theorem,}
		\newblock Annales of Mathematics, 2'nd Ser., Vol 74(2) (1961), 329--387.
		
		\bibitem[Ku]{Kuznetsov_Spherical}
		A.~Kuznetsov, 
		\newblock
		{\it Calabi-Yau and fractional Calabi-Yau categories,}
		\newblock
		Journal f\"ur die reine und angewandte Mathematik (Crelles Journal), (753)(2019), 239-267.
		
		\bibitem[L]{Lusztig_AHA_Graded}
		G.~Lusztig,
		\newblock
		{\it Affine Hecke algebras and their graded version,}
		\newblock
		Journal of the American Mathematical Society, 2(3) (1989), 599-635.
		
		\bibitem[L1]{Lusztig_AHA}
		G.~Lusztig, 
		\newblock
		{\it Notes on affine Hecke algebras,}
		\newblock
		In Iwahori-Hecke algebras and their representation theory (2002), 71-103. Springer, Berlin, Heidelberg.
		
		\bibitem[MD]{MacDonald::Hall}
		I. G.~Macdonald 
		\newblock
		{\it Symmetric functions and Hall polynomials,},
		\newblock Oxford university press (1998).
		
		\bibitem[MD1]{MacDonald}
		I. G.~Macdonald, .
		\newblock
		{\it Affine Hecke algebras and orthogonal polynomials,}
		\newblock
		Cambridge University Press (Vol. 157) (2003).
		
		\bibitem[OSh]{Orr_Shimozono}
		D.~Orr, M. Shimozono,
		\newblock {\it Specializations of nonsymmetric Macdonald-Koornwinder polynomials,}
		\newblock
		Journal of Algebraic Combinatorics 47.1 (2018), 91-127.
		
		\bibitem[RY]{RY}
		A.~Ram, M.~ Yip, 
		\newblock
		{\it A combinatorial formula for Macdonald polynomials,}
		\newblock
		Advances in Mathematics, 226(1)(2011), 309-331.
		
		\bibitem[R]{Rogers}
		L. J.~ Rogers,
		\newblock
		{\it On the expansion of some infinite products,}
		\newblock
		Proceedings of the London Mathematical Society 1.1 (1892), 337-352.	
		\bibitem[VV]{VV}
		M.~Varagnolo, E.~Vasserot,
		\newblock
		{\it Double affine Hecke algebras and affine flag manifolds, I,
			\newblock
			Affine flag manifolds and principal bundles}, 233-289,
		Trends Math., Birkhauser/Springer Basel AG, Basel (2010).
		
		\bibitem[Ya]{Yantzen}
		J. C. ~Jantzen
		\newblock {\it Representations of algebraic groups,}
		\newblock American Mathematical Soc. (2007).
		
	\end{thebibliography}
\end{document}